\documentclass[11pt,reqno,UTF8,twoside]{amsart}
\usepackage{mathrsfs}
\usepackage{amsfonts,amssymb,amsmath,amsthm}
\usepackage[noadjust]{cite}
\usepackage[colorlinks=true,citecolor=red,linkcolor=blue]{hyperref}
\usepackage{tocvsec2}
\usepackage{titletoc}
\usepackage{geometry}
\usepackage{bm}
\usepackage{indentfirst}
\usepackage{graphicx}
\usepackage{float} 
\usepackage{booktabs}
\usepackage{longtable}
\usepackage{stmaryrd}
\usepackage{enumerate}
\usepackage{ulem}
\usepackage{color,soul}
\usepackage{setspace}
\usepackage[pagewise]{lineno} 

\usepackage{appendix}
\usepackage{cases}
\usepackage{todonotes}
\usepackage{enumitem}

\numberwithin{equation}{section}
\newtheorem{theorem}{Theorem}[section]
\newtheorem{lemma}[theorem]{Lemma}
\newtheorem{corollary}[theorem]{Corollary}
\newtheorem{proposition}[theorem]{Proposition}
\newtheorem{remark}[theorem]{Remark}
\newtheorem{definition}[theorem]{Definition}

\newtheorem*{thm}{Main Theorem}

\newcommand{\T}{\mathbb{T}}
\newcommand{\N}{\mathbb{N}}
\newcommand{\R}{\mathbb{R}}
\newcommand{\Z}{\mathbb{Z}}

\newcommand{\Pb}{\mathbb{P}}
\newcommand{\supp}{\operatorname{supp}}

\newcommand{\dist}{\operatorname{dist}}
\newcommand{\Lip}{\operatorname{Lip}}

\makeatletter
\@namedef{subjclassname@2020}{\textup{2020} Mathematics Subject Classification}
\makeatother

\allowdisplaybreaks

\begin{document}
	
\title[Mixing of random KdV]{\large E{\MakeLowercase{xponential mixing for} K\MakeLowercase{orteweg--de} V\MakeLowercase{ries equation with localized noise}}}

\author[Y. Chen, S. Xiang, Z. Zhang, J.-C. Zhao]{Y\MakeLowercase{uxuan} C\MakeLowercase{hen}, \; S\MakeLowercase{hengquan} X\MakeLowercase{iang}, \;  Z\MakeLowercase{hifei} Z\MakeLowercase{hang}, \; J\MakeLowercase{ia-}C\MakeLowercase{heng} Z\MakeLowercase{hao}}

\address[Yuxuan Chen]{School of Mathematical Sciences, Peking University, 100871, Beijing, China.}
\email{chen\underline{ }yuxuan@pku.edu.cn}

\address[Shengquan Xiang]{School of Mathematical Sciences, Peking University, 100871, Beijing, China.}
\email{shengquan.xiang@math.pku.edu.cn}

\address[Zhifei Zhang]{School of Mathematical Sciences, Peking University, 100871, Beijing, China.}
\email{zfzhang@math.pku.edu.cn}

\address[Jia-Cheng Zhao]{School of Mathematical Sciences, Shenzhen University, 518061, Shenzhen, China.}
\email{zjc@szu.edu.cn}

\subjclass[2020]{35Q53, 35R60, 37A25, 93C20}

\keywords{KdV; Exponential mixing; Nonlinear smoothing; Control property; Localized noise} 

\thanks{Shengquan Xiang is partially  supported by NSFC 12571474. Zhifei Zhang is partially supported by NSFC 12288101.}
	
\begin{abstract}
We establish exponential mixing for the randomly forced and weakly damped KdV equation in $L^2(\T)$.
The noise is bounded, localized, and degenerate in high frequencies. Our proof relies on a general probabilistic framework in \cite{LWX-24,CXZZ-25}, nonlinear smoothing for KdV and its linearization via normal form transformation, and stabilization of the system by localized force. This paper continues a series of works connecting asymptotic compactness, control theory, and ergodicity and mixing for randomly forced dispersive PDEs.
\end{abstract}
	
\maketitle

\settocdepth{section}
\titlecontents{subsection}[5em]{}{\contentslabel{2em}}{\hspace*{-2em}}{\hfill\contentspage}

\tableofcontents

\section{Introduction}\label{Section-Introduction}

We investigate asymptotic dynamics of the KdV equation on the torus $\T=\R/2\pi\Z$, in the presence of weak damping and random forcing, which reads
\begin{equation}\label{Problem-random}
    \begin{cases}
        \partial_t u+\partial_x^3 u+a u+\tfrac{1}{2}\partial_x (u^2)\  =\eta,\\
        u(0,\cdot)=u_0\in \dot{L}^2(\T).
    \end{cases}
\end{equation}
Here, $a>0$ is the damping coefficient, $\dot{L}^2(\T)$ denotes the space of mean-zero $L^2$ functions, and $\eta=\eta(t,x)$ represents a random noise {\it localized} in a space-time region. Our main result establishes {\it exponential mixing} for the associated Markov semigroup: the law of solution converges to the unique invariant measure at an exponential rate.

The KdV equation (with $a=0$ and $\eta\equiv 0$) holds significant mathematical and physical importance, originally modeling unidirectional wave propagation in shallow water. In addition to being a dispersive PDE, it is also Hamiltonian and completely integrable. We refer the reader to \cite{Bourgain-93,CKSTT-03,KT-06,BIT-11,KV-19} and references therein for the fruitful well-posedness results. 

Our interest in weak damping and localized noise in \eqref{Problem-random} stems from the recent works \cite{LWX-24,CLXZ-24,CXZZ-25,CX-26} on the statistics of random hyperbolic/dispersive equations. The weak damping dissipates energy without simultaneously regularizing the solution (this contrasts with strong damping represented by negative Laplacian operator \cite{KP-08,Kuk-10}). In the deterministic setting, it leads to the existence of compact attractor \cite{Gou-18}.

The localized feature of noise was introduced to the study of mixing by Shirikyan \cite{Shi-15} for 2D Navier--Stokes system. We refer to, e.g.~\cite{Shi-21,Ner-24,LWX-24,CXZZ-25} for further contributions to parabolic and dispersive equations with localized noise. A key mathematical motivation for studying localized noise lies in the deep connection between long-time behavior and control properties.

\subsection{Main results}\label{Section-mainresult}

Throughout this paper, $(t_1,t_2)\subset (0,1)$ and $(s_1,s_2)\subset\T$ are fixed temporal and spatial intervals, respectively (we parametrize $\T$ as $[0,2\pi]$ modulo $0\sim 2\pi$). Let
\begin{equation}\label{eq-chi}
\chi(t,x)=\mathbf{1}_{(t_1,t_2)}(t)\mathbf{1}_{(s_1,s_2)}(x)
\end{equation}
be the characteristic function of $(t_1,t_2)\times (s_1,s_2)$. We introduce the non-constant eigenfunctions of Laplacian operator on $(s_1,s_2)$ with Neumann boundary conditions,
\begin{equation}\label{orthonormal-basis}
\beta_k(x)=\sqrt{\frac{2}{s_2-s_1}}\cos \left(k\pi \frac{x-s_1}{s_2-s_1}\right),\quad k\in \N^+.
\end{equation}
Then $\{\beta_k;k\in \N^+\}$ forms an orthonormal basis of mean-zero $L^2(s_1,s_2)$ functions. We also fix an orthonormal basis $\{\alpha_j;j\in \N^+\}$ of $L^2(t_1,t_2)$. Extending by zero, we regard $\alpha_j\in L^2(0,1)$ and $\beta_k\in \dot{L}^2(\T)$. The smoothness of $\beta_k$ on $[s_1,s_2]$ yields $\beta_k\in \dot{H}^{1/2-\varepsilon}(\T)$ for any $\varepsilon>0$.

\medskip

\noindent{\bf Setting of noise.} {\it The law of $\eta(t,x)$ is statistically $1$-periodic, in the sense that
\begin{equation*}
\eta_n(t,\cdot):=\eta(t+n,\cdot),\quad t\in [0,1),\,n\in\N
\end{equation*}
is a sequence of i.i.d.~random variables in $L^2(0,1;\dot{L}^2(\T))$. Moreover, $\eta_n$ can be specified as
\begin{equation}\label{Noise-structure-1}
\eta_n(t,x)=\chi(t,x)\sum_{j,k\in \N^+}b_{j,k}\theta^n_{j,k}\alpha_j(t)\beta_k(x).
\end{equation}
Here, $b_{j,k}\ge 0$ are non-negative constants decaying sufficiently fast (see condition \eqref{noisestrength} below), and $\theta^n_{j,k}$ are independent scalar random variables. In addition, the law of $\theta^n_{j,k}$ admits a $C^1$ density function $\rho_{j,k}$ supported in $[-1,1]$, which satisfies $\rho_{j,k}(0)>0$.}

\medskip

Under this setting, the random KdV equation \eqref{Problem-random} admits a unique solution $u\in C(\R^+;\dot{L}^2(\T))$ almost surely. The periodicity of noise turns the discrete-time solutions
\[u_n:=u(n),\quad n\in \N\]
into a Markov process $(u_n,\Pb_{u_0})$ on $\dot{L}^2(\T)$. We write $\mathscr{D}(u_n)$ for the law of $u_n$, which belongs to the family of Borel probability measures $\mathcal{P}(\dot{L}^2(\T))$.

\begin{thm}\label{Main-theorem-1}
Given $\mathbf{B}>0$ and $\sigma\in (0,1/4)$, there exists an integer $N\in \N^+$ such that if, in addition to the above noise setting, the non-negative numbers $b_{j,k}$ in \eqref{Noise-structure-1} satisfy
\begin{equation}\label{noisestrength}
\sum_{j,k\in \N^+} b_{j,k}^2 \|\alpha_j\|_{L^2(0,1)}^2 \|\beta_k\|_{\dot{H}^{1/4+\sigma}(\T)}^2\leq \mathbf{B}^2,
\end{equation}
and
\begin{equation}\label{noisedegenerate}
b_{j,k}\not =0,\quad \forall 1\le j,k\le N,
\end{equation}
then the Markov process $(u_n,\Pb_{u_0})$ admits a unique invariant measure $\mu\in \mathcal{P}(\dot{L}^2(\T))$, which is supported in a bounded subset of $\dot{H}^{1/4+\sigma}(\T)$. Moreover, there are constants $C,\gamma>0$, such that for any initial data $u_0\in \dot{L}^2(\T)$, 
\begin{equation*}
\|\mathscr D(u_n)-\mu\|_{L}^*\leq C\left(1+\|u_0\|_{\dot{L}^2}^2\right)e^{-\gamma n},\quad\forall n\in\N,
\end{equation*}
where $\|\cdot\|_L^*$ denotes the dual-Lipschitz distance in $\dot{L}^2(\T)$ (defined in Section~{\rm\ref{Section-notation}} below).
\end{thm}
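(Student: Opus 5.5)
The plan is to verify the hypotheses of the abstract criterion for exponential mixing of random dynamical systems $u_{n+1}=S(u_n,\eta_n)$ developed in \cite{LWX-24,CXZZ-25}. The criterion asks for three things: (i) a dissipative, asymptotically compact structure; (ii) irreducibility, i.e.\ approximate controllability to a fixed point; (iii) a coupling condition derived from a \emph{squeezing} property, namely that the linearization of $S$ in the control variable, restricted to finitely many modes, has dense image after a compact perturbation. Once these are checked, the criterion yields both the unique invariant measure $\mu$ and the estimate $C(1+\|u_0\|^2)e^{-\gamma n}$ in the dual-Lipschitz norm.

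The first step is dissipation. The $L^2$ energy identity gives
\[
\frac{d}{dt}\|u\|^2+2a\|u\|^2=2(u,\eta),
\]
and since $\eta$ is bounded, $\|u_n\|_{L^2}\le e^{-an}\|u_0\|+C(\mathbf B)$. For compactness I will use the nonlinear smoothing of KdV through the normal form transformation $u\mapsto u+\tfrac{1}{6}\partial_x^{-1}(\cdot)^2$-type corrections in Bourgain spaces. This splits the solution as $u(t)=e^{-at}e^{-t\partial_x^3}u_0+v(t)$, where $v$ is bounded in $\dot H^{1/4+\sigma}$ uniformly for bounded data. The noise lies in $\dot H^{1/4+\sigma}$ by \eqref{noisestrength}, so it is compatible with this regularity. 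It follows that $S$ maps bounded sets into (exponentially decaying term) + (compact set). An attracting compact invariant set $\mathcal A\subset \dot H^{1/4+\sigma}$ then exists, and $\mu$ must be supported in it.

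The second step is irreducibility, which I will get from stabilization by the localized force. Take a feedback control $\eta=-\chi\, \mathcal K u$ built from the damping $a$ together with localized damping $\mathbf 1_{(s_1,s_2)}$. The standard unique continuation and propagation argument for KdV on $\T$ gives an exponential decay rate. Applying this repeatedly drives any point of $\mathcal A$ close to $0$. The feedback must be realized with $|\theta|\le1$ and decomposed in the basis $\alpha_j\beta_k$ with small coefficients. Because the density satisfies $\rho_{j,k}(0)>0$, the zero control and a small neighborhood of it carry positive probability, so $0$ is reachable with positive probability.

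The third and main step, which I expect to be the obstacle, is the coupling/squeezing condition. Here one linearizes $S$ at $(u_0,\eta)$ with respect to the control. One must show that for large $N$ the projections onto $\operatorname{span}\{\alpha_j\beta_k: j,k\le N\}$ suffice to make the image of the linearization dense modulo a contraction, for all $u_0\in\mathcal A$ uniformly. I will prove approximate controllability of the linearized KdV $w_t+w_{xxx}+aw+\partial_x(uw)=\chi f$ via observability of its adjoint, using the normal-form linearization so that $\partial_x(uw)$ becomes a perturbation that is compact in $L^2$. Observability for the free Airy group localized in $(t_1,t_2)\times(s_1,s_2)$ follows from Ingham-type inequalities. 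A compactness-uniqueness argument then handles the potential term, and the unique continuation needed there holds because $u$ is $\dot H^{1/4+\sigma}$-regular. Finite-dimensional truncation to $N$ modes uses the continuity of the linearized map in $u_0\in\mathcal A$ together with the compactness of $\mathcal A$. Finally, the $C^1$ densities allow the measure transformation/coupling lemma from \cite{LWX-24} to be applied, which completes the verification.
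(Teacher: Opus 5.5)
Your overall architecture matches the paper's. You use the criterion of \cite{LWX-24}, with $(\mathbf{EAC})$ from nonlinear smoothing and $(\mathbf{C})$ from a squeezing property plus a measure-transformation lemma. For $(\mathbf{I})$ the feedback detour is unnecessary, and a law $-\chi\mathcal{K}u$ need not even lie in $\mathcal{E}$. With $\eta\equiv 0$ the damping alone gives $\|u(t)\|_{\dot L^2}^2\le e^{-at}\|u_0\|_{\dot L^2}^2$ uniformly on the bounded set $\mathcal{Y}$, and continuity in the noise plus $0\in\mathcal{E}$ finishes.

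The genuine gap is in Step 3, where the two decisive ingredients are asserted rather than supplied.

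First, the source of the contraction is missing. A control map on finitely many modes cannot have dense image. Approximate controllability alone gives no bound $\|u(1)-w(1)\|_{\dot L^2}\le q\|u_0-w_0\|_{\dot L^2}$, because here $D_{u_0}S$ is not compact, so the parabolic ``dense range plus compact linearization'' argument is unavailable. Squeezing comes from the splitting $v(1)=S_a(1)v_0+[v(1)-S_a(1)v_0]$. Here $\|S_a(1)\|=e^{-a}$ supplies the contraction, and the bracket is bounded in $\dot H^{\sigma}$ by $C\|v_0\|_{\dot L^2}$ (Proposition~\ref{Prop-linearized_nonlinearsmooothing}). Proving that bound needs a normal form for the \emph{forward} linearized equation. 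This includes the term $\mathcal{B}(\partial_x(w^2),v)$ generated when the time derivative falls on $w$, which is tame only because $w$ solves KdV with $\dot H^{1/4+\sigma}$ data (Lemma~\ref{Lemma-smoothing-1}). You invoke a compact perturbation only inside the observability argument, not here.

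Once the splitting is in hand, there are two ways to proceed:
\begin{enumerate}
\item Cancel $K=D_{u_0}S-S_a(1)$ with a linear control operator depending $C^1$ on $(u_0,h)$, e.g.\ HUM composed with $K$, then truncate to $\mathcal{P}_N$ using compactness of $K$ and of $\mathcal{Y}\times\mathcal{E}$. Uniformity over the noise $h$ is needed, not only over $u_0$. This would be a legitimate variant of the paper's route.
\item Follow the paper: kill $P_m[v(1)-S_a(1)v_0]$ exactly and let smoothing plus damping dissipate the high modes. This forces the control's own Duhamel term to be smoothed with bounds independent of $N$. Hence you need $\dot H^{\sigma}$-valued controls, observability in $\dot H^{-\sigma}$, and the device of Lemma~\ref{lem-LFtoS(t)v0} of steering the low modes to $S_a(1)v_0$ rather than to $0$.
\end{enumerate}

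Second, unique continuation does not follow ``because $u\in\dot H^{1/4+\sigma}$'' from any standard result. The Carleman argument of \cite{RZ-06} needs an $L^\infty$ potential and an $L^2H^1$ solution. Here, by contrast:
\begin{enumerate}
\item $w\notin L^\infty$.
\item The adjoint solution lies only in $\dot H^{-s}$ with $s\ge 0$, so $w\partial_x\varphi$ is not defined in Sobolev spaces and must be handled by Bourgain-space bilinear estimates.
\item Because the controls have zero mean on $(s_1,s_2)$, the kernel of the observation consists of $\varphi$ equal to some $c(t)$ on the rectangle, not of $\varphi=0$.
\end{enumerate}
The paper handles this in several steps. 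It subtracts $c(t)$ and lifts $\varphi-c$ to $L^2_{loc}H^{3/4+\sigma-}$ by iterated propagation of regularity. It then gains two more derivatives from vanishing on an open set, via the multiplier $x$ and finite differences. Finally, it reworks the Carleman estimate so that $w\partial_x u^h$ is absorbed using only $w\in L^2$ (Proposition~\ref{Prop-UCP}). This is exactly the step that makes the $L^2$ result new, and your proposal treats it as automatic.
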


\begin{remark}
To the best of our knowledge, this is the first unique ergodicity result for the KdV equation in $L^2$. Recently, Glatt-Holtz, Martinez and Richards {\rm\cite{GHMR-25}} established unique ergodicity in $H^m\, (m\ge 2)$ for white-in-time noise, using a weak Foias--Prodi estimate in expectation. Besides overcoming the analytical challenges due to the lower $L^2$ regularity, another contribution of the present work is a control result along trajectories (Theorem~{\rm\ref{Theorem-control}}). Such pathwise information has potential for other relevant studies on random PDEs, for instance highly degenerate noise {\rm\cite{Ner-24,KNS-20,NZZ-24,LXZ-25,CXZ-26}}, and large deviation principle {\rm\cite{JNPS-15,CX-26}}.
\end{remark}

\subsection{Strategy of proof}\label{Section-strategy}

We employ a general mixing criterion from \cite{LWX-24,CXZZ-25}, which is based on coupling methods and asymptotic compactness. This criterion can be viewed as a dispersive counterpart of the approach by Kuksin and Shirikyan \cite{KS-12} and Shirikyan \cite{Shi-15} for parabolic systems, with asymptotic compactness playing a central role in compensating for the absence of smoothing effect. Three hypotheses of this criterion can be roughly summarized as follows:

\begin{itemize}
    \item[\tiny$\bullet$] (Exponential asymptotic compactness) The system \eqref{Problem-random} admits a bounded subset $\mathcal Y\subset \dot{H}^{1/4+\sigma}$ that attracts solutions exponentially in a pathwise manner:
    \[\dist_{\dot{L}^2}(u_n,\mathcal{Y})\le C\left(1+\|u_0\|_{\dot{L}^2}^2\right) e^{-\gamma n},\quad \forall u_0\in \dot{L}^2,\ n\in \N.\]

    \item[\tiny$\bullet$] (Irreducibility on $\mathcal Y$) Trajectories issued from $\mathcal Y$ can reach arbitrarily small neighborhoods of $0$: for any $r>0$, one can find $m\in \N^+$ such that
    \[\inf_{u_0\in \mathcal{Y}} \Pb_{u_0} (u_m\in B_{\dot{L}^2}(r))>0.\]

    \item[\tiny$\bullet$] (Coupling condition on $\mathcal Y$) For nearby initial states $u_0,w_0\in \mathcal Y$, we can construct two noise realizations (which is referred to as a coupling), such that with high probability,
    \[\|u_n-w_n\|_{\dot{L}^2}\le Ce^{-\gamma n},\quad \forall n\in \N.\]
\end{itemize}

\noindent We refer to Section~\ref{Section-criterion} for the precise statements.

Heuristically, the exponential asymptotic compactness reduces the problem to the attractor  $\mathcal{Y}$, the irreducibility implies a uniform positive probability for two trajectories from $\mathcal{Y}$ to become arbitrarily close, and the coupling condition indicates exponential convergence with high probability once they are sufficiently close at some time. If the coupling fails, we wait for irreducibility to bring trajectories close again, and then re-apply the coupling. Iterating this procedure with stopping time arguments then yields exponential mixing.

\medskip

As realized in \cite{CXZZ-25}, the verifications of both asymptotic compactness and coupling condition rely on {\it nonlinear smoothing}, which can be formulated as
\begin{equation}\label{eq_nonlinearsmoothing}
u(t)-e^{-(\partial_x^3+a)t} u_0\in \dot{H}^s(\T),\quad \forall u_0\in \dot{L}^2(\T)
\end{equation}
for some $s>0$. Note that both $u$ and the linear evolution $e^{-(\partial_x^3+a)t} u_0$ retain the $L^2$ regularity of the initial data, while their difference (namely the nonlinear part of the solution) gains extra regularity. Using multilinear estimates in Bourgain spaces and normal form transformation, we propose a variant  for linearized equation; see Proposition~\ref{Prop-linearized_nonlinearsmooothing}.

The nonlinear smoothing phenomenon was first discovered by Bourgain \cite{Bourgain-98} for nonlinear Schr\"odinger (NLS) equation, and has now become important to various topics in dispersive PDEs, including low-regularity well-posedness, growth of Sobolev norms, and global attractors. In the context of KdV equation, Babin, Ilyin and Titi \cite{BIT-11} and Colliander, Keel, Staffilani, Takaoka
and Tao \cite{CKSTT-03}, among others, explored analogous smoothing properties. Then the explicit formulation of nonlinear smoothing is established by Erdo\u{g}an and Tzirakis \cite{ET-13-kdv}. For KP-II equation (which can be viewed as a 2D generalization of KdV), nonlinear smoothing was proved by Isaza, Mej\'ia and Tzvetkov \cite{IMT-06}. Further results along this line can be found in, e.g.~\cite{OS-21,COS-24,ST-25}. 

\medskip

While asymptotic compactness is a consequence of nonlinear smoothing, and the irreducibility follows easily from the $L^2$ energy inequality, the coupling condition is more delicate and constitutes the core challenge. It is naturally related to ``stabilization along trajectory" as a control problem, which we now outline informally (see Section~\ref{Section-control} for rigorous discussions).

Since the noise $\eta_0$ in time $[0,1]$ is non-degenerate in the low frequencies by \eqref{noisedegenerate}, a perturbation $\eta_0+\chi \mathcal{P}_N \xi$ is small in total variation norm, where $\mathcal{P}_N$ is the orthogonal projection 
\begin{equation}\label{eq-PN}
    \mathcal{P}_N\colon L^2(t_1,t_2;\dot{L}^2(s_1,s_2))\to {\rm span} \{\alpha_j(t)\beta_k(x); 1\le j,k\le N\},
\end{equation}
and $\xi$ is a control function to be chosen. According to optimal coupling theory, we can construct another realization of the noise $\widetilde{\eta}_0$, with the same law as $\eta_0$, such that with high probability, 
\[\widetilde{\eta}_0=\eta_0+\chi \mathcal{P}_N \xi.\]

Using these two realizations of noise $\eta_0$ and $\tilde{\eta}_0$ for the dynamics of $w$ and $u$ on $[0,1]$, then the difference $u-w$ is governed, up to higher-order terms, by the linearized equation around $w$:
\begin{equation}\label{Problem-linearcontrol-0}
\partial_t v+\partial^3_xv+av+\partial_x(wv)=\chi\mathcal{P}_N\xi.
\end{equation}
Here $v(0)=v_0:=u_0-w_0$, and $v(1)\approx u(1)-w(1)$. If we can find a control $\xi$ so that 
$$
\|v(1)\|_{L^2}\le q\|v(0)\|_{L^2}
$$
for some $q\in (0,1)$, then the coupling condition follows by iteration on $[n,n+1]$ for each $n\in \N$.

To this end, we exploit the idea of frequency analysis as in \cite{LWX-24, CXZZ-25}, splitting the system into two parts in terms of low/high frequencies. Accordingly, the proof is divided into two parts:

\begin{itemize}
\item[\tiny$\bullet$] {\it Part 1 {\rm(}Low-frequency controllability}). The first $m$ spatial modes can be steered to $0$ by the finite-dimensional control $\chi \mathcal{P}_N \xi$; see Proposition~\ref{Prop-LFcontrol}.
This will be achieved by observability inequality for linear dual system of \eqref{Problem-linearcontrol-0}, combined with an adaptation of the Hilbert uniqueness method (HUM). Propagation properties from microlocal analysis, as well as unique continuation via Carleman estimate also come into play. These ideas are in light of the global stabilization by Laurent, Rosier and Zhang \cite{LRZ-10}.

\item[\tiny$\bullet$] {\it Part 2 {\rm(}High-frequency dissipation}). Owing to nonlinear smoothing \eqref{eq_nonlinearsmoothing} for linearized system (Proposition~\ref{Prop-linearized_nonlinearsmooothing}), the contribution of potential term $\partial_x (wv)$ belongs to a higher Sobolev space. Consequently, in high frequencies the damping term prevails, yielding a dissipation effect independent of the control. It is worth noting that the extra $H^{1/4+\sigma}$ regularity of $w$ (guaranteed by the attractor $\mathcal{Y}$) is crucial to nonlinear smoothing.
\end{itemize}

\begin{remark}
Owing to the specific dispersion relation of the KdV equation and the localized nature of the noise, the current paper differs from prior works {\rm\cite{LWX-24,CXZZ-25}} on wave and NLS equations in the following aspects:

\begin{itemize}
\item[$(1)$] The mechanism underlying nonlinear smoothing \eqref{eq_nonlinearsmoothing} is more intricate. Indeed, for wave equation, the argument is rather straightforward, thanks to Strichartz estimate and one derivative gain from the source term. And for NLS, one must remove the resonant terms, and then exploit a multilinear estimate that provides a regularity gain for the non-resonant terms. Nevertheless, for KdV equation, such bilinear estimate fails:
\[\|\partial_x (uv)\|_{X^{s+\varepsilon,-1/2}}\not\le C\|u\|_{X^{s,1/2}}\|v\|_{X^{s,1/2}}.\]
Inspired by {\rm\cite{ET-13-kdv}}, we perform the normal form reduction via integration by parts in time to deal with the nonlinearity $\partial_x(u^2)$ and the potential term $\partial_x(wv)$.

\item[$(2)$] In order to be compatible with the conservation of mass, we use the characteristic function $\chi$ instead of smooth cut-offs in the noise structure \eqref{Noise-structure-1}. This brings extra difficulties to the control problem, in comparison with previous studies {\rm\cite{RZ-96,RZ-06,LRZ-10}}, especially to the proof of observability inequality (see the discussion preceding the proof of Lemma~{\rm\ref{Lemma-fullobs}}).

\item[$(3)$] In most of the existing literature on mixing for dispersive PDEs, the regularity of solution is at least $H^1$, whereas our results are stated in $L^2$. This low regularity causes essential difficulties to PDE analysis. For example, in the adjoint system \eqref{Adjoint-problem}, the product $w\partial_x \varphi$ is not well-defined in Sobolev spaces for $w\in H^{1/4+\sigma}$ and $\varphi\in L^2$, and we need new bilinear estimates in Bourgain spaces (Lemma~{\rm\ref{Lemma-bilinear}} and Corollary~{\rm\ref{Coro-bilinear}}). Moreover, a unique continuation property (Proposition~{\rm\ref{Prop-UCP}}) becomes more subtle.
\end{itemize}
\end{remark}

\subsection{Prior works}\label{Section-priorwork}

We present a brief historical review on the mixing and control of KdV equation. As the literature is now extensive, we mention only the most relevant works.

\subsubsection{Mixing of randomly forced PDEs}

The study of ergodicity and mixing property of invariant measure for randomly forced parabolic PDEs has achieved rich developments, with the 2D Navier--Stokes equations 
\[\partial_t u-\Delta u+(u\cdot\nabla)u+\nabla p=\eta,\quad {\rm div}(u)=0\]
as a paradigmatic example in this field. Early results are surveyed in the monographs \cite{KS-12,Deb-13}. The seminal works \cite{HM-06,HM-08} of
Hairer and Mattingly established the first exponential mixing result when the noise is white in time and highly degenerate in Fourier modes, by introducing Malliavin calculus and hypoellipticity. Shirikyan \cite{Shi-15,Shi-21} proved exponential mixing for interior/boundary localized noise.  Kuksin, Nersesyan and Shirikyan \cite{KNS-20} treated highly degenerate Haar noise using geometric control methods. Notably, \cite{KNS-20,Shi-15,Shi-21} convey a link between mixing and controllability through coupling methods.

A natural question is whether similar results are available for hyperbolic/dispersive equations, where parabolic regularization is absent. For wave equations
\[u_{tt}-\Delta u+a \partial_t u+f(u)=\eta,\]
Barbu and Da Prato \cite{BDP-02} provided an ergodicity result. Exponential mixing was proved by Martirosyan \cite{Martirosyan-14} for white-in-time noise using Foias--Prodi estimates, and recently, by Liu, Wei, Xiang, Zhang and Zhao \cite{LWX-24} for localized bounded noise using asymptotic compactness and control theory. The gain of one derivative from the source term is crucial to these studies.

Severe obstacles appear when studying Schr\"{o}dinger and KdV equations, as there is no such regularization from the external force. For Schr\"odinger equations
\[i\partial_tu+\Delta u+iau+f(u)=\eta,\]
Debussche and Odasso \cite{DO-05} obtained polynomial mixing on an interval, under the setting of white-in-time noise, which is based on weak Foias--Prodi estimates. More recently, the authors \cite{CXZZ-25} established exponential mixing on $\T$ for localized noise using nonlinear smoothing and control theory, and Chen, Xiang and Zhang \cite{CXZ-26} tackled the case of highly degenerate Haar noise. See also \cite{BFZ-23,BFZ-24} for ergodic results in higher dimensions. 

For the KdV equation, the unique ergodicity is established very recently by Glatt-Holtz, Martinez and Richards \cite{GHMR-25} for white-in-time noise; in addition, they proved exponential mixing when the damping coefficient is sufficiently large. Whether exponential mixing holds true for
white-in-time forced Schr\"odinger and KdV equations with arbitrary damping remains open.

\subsubsection{Control theory for KdV}

The controllability for KdV equation is investigated by Russell and Zhang \cite{RZ-96}, where the main result is the local controllability on the interval $[0,2\pi]$ with periodic boundary conditions, under a control of the form
\begin{equation}\label{control-form}
g(x) \left(h(t,x)-\int_0^{2\pi} g(y)h(t,y)dy\right).
\end{equation}
Here $g$ is a given smooth cut-off function, and $h$ is the control to be chosen.
This result is then extended to a global version by Laurent, Rosier and Zhang \cite{LRZ-10}, by proving global stabilization for the associated closed-loop system (see also \cite{LLR-15} for the case of Benjamin--Ono equation with the control of form \eqref{control-form}). Recently, Niu, Wang and Xiang \cite{NWX-24} investigated a control localized only on space-time measurable set with positive measure. It is worth mentioning that the control in the current paper has a different structure: it is localized in a space-time domain with non-smooth cut-off $\chi$, and only contains finitely many frequencies. In addition, the control problems for KdV with boundary conditions are very different; see, e.g. \cite{Rosier-97, NX-25}.

\subsection{Organization} 

Section~\ref{Section-preliminary} includes some preliminary results on the well-posedness and nonlinear smoothing for KdV equation, especially Bourgain spaces and normal form transformation. Section~\ref{Section-control} is devoted to the control property, where the main difficulties are observability inequalities in low frequencies, and dealing with the potential terms in high frequencies. In Section~\ref{Section-EM}, we verify three hypotheses in the abstract criterion for mixing, and conclude the proof of the Main Theorem. The Appendix supplies some auxiliary results and proofs.

\subsection{A guide of notations}\label{Section-notation}

We gather here some repeatedly used notations.

\medskip

\noindent\textit{$\bullet$ Functional analysis.} Let $X$ be a Banach space. 
$B_X(x,R)$ denotes the open ball of radius $R$ centered at $x\in X$, and $B_X(R):=B_X(0,R)$. The distance from $x\in X$ to a subset $A\subset X$ is $\dist_X(x,A)$. The space of bounded linear operators from $X$ to $Y$ is $\mathcal{L}(X,Y)$, and $\mathcal{L}(X):=\mathcal{L}(X,X)$. We write $\mathcal{B}(X)$ for the Borel $\sigma$-algebra. The space of bounded continuous functions is $C_b(X)$, equipped with supremum norm $\|\cdot \|_\infty$. And the bounded Lipschitz functions constitute $L_b(X)$, with norm $\|f\|_{L_b(X)}=\|f\|_{\infty}+\Lip(f)$, where $\Lip(f):=\sup_{x\not =y} |f(x)-f(y)|/\|x-y\|$.

\medskip

\noindent\textit{$\bullet$ Function spaces on $\T$.} The Fourier coefficients of $u\colon \T\to \R$ are denoted by $\widehat{u}$. For any $s\in \R$, the Sobolev space $H^s=H^s(\T)$ is equipped with norm $\|u\|_{H^s}=\left(\sum_{k\in\Z}\langle k \rangle^{2s}|\widehat{u}(k)|^2\right)^{1/2}$, where $\langle x\rangle=\sqrt{1+|x|^2}$. 
Let $S_a(t)=e^{-t(\partial_x^3+a)}$ be the $C_0$-group on $H^s$, and $S(t):=S_0(t)$, then
\begin{equation}\label{S_a(t)decay}
\|S_a(t)\|_{\mathcal L(H^s)}=e^{-at}.
\end{equation}

The mean-zero Sobolev space $\dot{H}^s$ consists of $u\in H^s$ with $\widehat{u}(0)=0$. To avoid possible confusion, we clarify that the homogeneous and inhomogeneous Sobolev norms are equivalent for mean-zero functions, and we often use $\|\cdot\|_{\dot{H}^s}$ only to emphasize the mean-zero property. We write $P_m$ for the orthogonal projection from $\dot{H}^s$ to finite-dimensional subspace
\begin{equation}\label{def_Hm}
\dot{H}_m:={\rm span}\{\sin kx, \cos kx ; 1\leq k\le m\}.
\end{equation}

For space-time functions $u\colon \R\times \T\to \R$, the Fourier transform is denoted with $\widehat{u}(\tau,k)$. The Bourgain spaces $X^{s,b},Y^s,Z^s$ and their variants are defined in Section~\ref{Section-standardGWP}.

\medskip

\noindent\textit{$\bullet$ Sobolev spaces on $(s_1,s_2)$.} We introduce $\Delta_{loc}$ as the Laplacian operator on the finite interval $(s_1,s_2)$ with Neumann boundary conditions. The eigenfunctions of $-\Delta_{loc}$ are $1$ and $\beta_k\, (k\in \N^+)$ given by \eqref{orthonormal-basis}, with respect to eigenvalues $0$ and $\lambda_k=\left(\frac{k\pi}{s_2-s_1}\right)^2$. Then for any $s\in (-1/2,1/2)$, the mean-zero Sobolev space $\dot{H}^s (s_1,s_2)$ admits $\{\lambda_k^{-s/2}\beta_k;k\in \N^+\}$ as an orthonormal basis, and can be equipped with the equivalent norm $\|u\|_{\dot{H}^s(s_1,s_2)} = \|(-\Delta_{loc})^{s/2} u \|_{\dot{L}^2(s_1,s_2)}$.

\medskip

\noindent\textit{$\bullet$ Random variables.} Let $X$ be a Polish space (i.e.~separable complete metric space). The law of $X$-valued random variable $\eta$ is $\mathscr{D}(\eta)$, which belongs to the space of Borel probability measures $\mathcal{P}(X)$. The weak convergence in $\mathcal{P}(X)$ is compatible with the dual-Lipschitz distance:
\[\|\mu-\nu\|_L^*:=\sup_{\|f\|_{L_b(X)}\le 1} |\langle f,\mu\rangle-\langle f,\nu\rangle|,\quad \mu,\nu\in \mathcal{P}(X).\]
A coupling between $\mu$ and $\nu$ is a pair of $X$-valued random variables with marginal distributions equal to $\mu$ and $\nu$, respectively. The set of all couplings is denoted by $\mathscr{C}(\mu,\nu)$.

\medskip

\noindent\textit{$\bullet$ Constants.} Various constants $C$ may change from line to line. The dependence on parameters is represented by $C(\cdot)$, which always means a non-decreasing function of such parameter. We write $a\lesssim b$ when there is a universal constant $C>0$ such that $a\leq Cb$. And $a-$ stands for any constant strictly less than $a$. 

\section{Preliminary on Bourgain spaces and nonlinear smoothing}\label{Section-preliminary}

This section reviews some fundamental elements related to the well-posedness of the deterministic KdV equation, including Bourgain spaces, normal form transformation, and nonlinear smoothing. These ingredients also contribute to control property (Section~\ref{Section-HF}) and asymptotic compactness in the probabilistic criterion (Section~\ref{Section-AC}). Owing to these connections to the main objectives of this paper, we provide a fairly detailed exposition.  

For any fixed $T>0$ and $s\in [0,1)$, we consider the following deterministic KdV equation
\begin{equation}\label{Problem-nonlinear}
\begin{cases}
\partial_t u+\partial_x^3 u+a u+\frac{1}{2}\partial_x (u^2)=f,\quad t\in[0,T],\\
u(0,\cdot)=u_0\in \dot{H}^s(\T),
\end{cases}
\end{equation}
where $f\in L^2(0,T;\dot{H}^s(\T))$ denotes an external force with zero spatial mean. Note that the system conserves mass, i.e.~$\int_{\T} u(t,x) dx=0$ for any $t\in [0,T]$. A (mild) solution of \eqref{Problem-nonlinear} is defined as a function $u\in C([0,T];\dot{H}^s)$ satisfying the Duhamel formula
\[u(t)=S_a(t)u_0-\frac{1}{2}\int_0^t S_a(t-r)\partial_x (u^2)(r) dr +\int_0^t S_a(t-r)f(r)dr.\]
The local and global well-posedness of \eqref{Problem-nonlinear} has been extensively studied; see, e.g.~\cite{Bourgain-93,CKSTT-03,KT-06,BIT-11,KV-19}, where various techniques have been employed, such as high-low decomposition, the $I$-method, normal form transformation, and complete integrability. 

In particular, the following version of global well-posedness is useful to the current paper.

\begin{proposition}[Global well-posedness]\label{Prop-uncontrolled}
Given $T>0$ and $s\in [0,1)$, for any $u_0\in \dot{H}^s$ and $f\in L^2(0,T;\dot{H}^{s})$, the initial value problem \eqref{Problem-nonlinear} admits a unique solution $u\in \dot{Y}^s_T\subset C([0,T];\dot{H}^s)$ (see Definition~{\rm\ref{def_Bourgain_YZ}} for the Bourgain space $\dot{Y}^s_T$). 
Moreover, the solution map 
\[\dot{H}^s\times L^2(0,T;\dot{H}^{s})\ni (u_0,f)\mapsto u\in \dot{Y}^s_T\] 
is locally Lipschitz and continuously differentiable.
\end{proposition}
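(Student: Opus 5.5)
The plan is the classical Bourgain route: local well-posedness by a contraction in $\dot Y^s_T$, globalization by an a priori bound, and smoothness of the data-to-solution map from the smoothness of the fixed-point map.

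\emph{Local theory.} Write the Duhamel formula as the fixed-point problem $u=\Phi(u)$, with
\[
\Phi(u)(t)=\psi(t)\Big[S_a(t)u_0-\tfrac12\int_0^tS_a(t-r)\partial_x(u^2)(r)\,dr+\int_0^tS_a(t-r)f(r)\,dr\Big],
\]
where $\psi$ is a smooth time cut-off. The following standard estimates in the spaces of Definition~\ref{def_Bourgain_YZ} are needed:
\begin{itemize}
\item the linear bound $\|\psi S_a(t)u_0\|_{\dot Y^s}\lesssim\|u_0\|_{\dot H^s}$;
\item the Duhamel bound $\|\psi\int_0^tS_a(t-r)F\,dr\|_{\dot Y^s}\lesssim\|F\|_{\dot Z^s}$, which applies to the forcing term since $\|f\|_{\dot Z^s_T}\lesssim T^{1/2}\|f\|_{L^2_T\dot H^s}$;
\item Bourgain's bilinear estimate $\|\partial_x(uv)\|_{\dot Z^s}\lesssim\|u\|_{\dot X^{s,1/2}}\|v\|_{\dot X^{s,1/2}}$, valid for mean-zero functions and $s\ge0$ (it rests on the resonance identity $k^3-k_1^3-k_2^3=3kk_1k_2$), together with a small power $T^\theta$ gained by localizing to short times.
\end{itemize}
With these, $\Phi$ is a contraction on a ball of $\dot Y^s_{T_0}$ for some $T_0=T_0(\|u_0\|_{\dot H^s},\|f\|_{L^2\dot H^s})>0$. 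The embedding $\dot Y^s\subset C([0,T];\dot H^s)$ then gives continuity in time, and uniqueness holds in $\dot Y^s_T$ by the same difference estimate.

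\emph{Globalization.} This is the main obstacle, since the local time depends on the $\dot H^s$ norm. For $s=0$ it is easy: pair the equation with $u$, use $\int u\,\partial_x(u^2)=0$ (justified by smooth approximation together with continuous dependence), and obtain
\[
\tfrac{d}{dt}\|u\|_{L^2}^2+2a\|u\|_{L^2}^2=2\langle f,u\rangle .
\]
This gives a bound on $\|u(t)\|_{L^2}$ on $[0,T]$, and the local theory can be iterated. For $0<s<1$ there is no conserved $\dot H^s$ quantity. I would handle this by persistence of regularity: the bilinear estimate at level $s$ is linear in the $\dot X^{s,1/2}$ factor,
\[
\|\partial_x(uv)\|_{\dot Z^s}\lesssim\|u\|_{\dot X^{s,1/2}}\|v\|_{\dot X^{0,1/2}}+\|u\|_{\dot X^{0,1/2}}\|v\|_{\dot X^{s,1/2}},
\]
so on each interval whose length is fixed by the $L^2$ bound, the $\dot Y^s$ norm grows by at most a fixed factor. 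Iterating over $[0,T]$ gives a finite (exponential) bound. Alternatively, one could cite the $I$-method or the $H^s$ globalization of \cite{CKSTT-03,BIT-11}, but the persistence argument seems self-contained.

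\emph{Regularity of the solution map.} The map $\Phi$ is affine in $(u_0,f)$ and quadratic, hence $C^\infty$, in $u$. It is a contraction on short intervals, so the implicit function theorem, applied iteratively on the subintervals and glued together, gives local Lipschitz dependence and $C^1$ dependence of $u\in\dot Y^s_T$ on $(u_0,f)$. Lipschitz constants are uniform on bounded sets, because the number of subintervals depends only on the a priori bound.
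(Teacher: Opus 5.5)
Your proposal is correct, and apart from one step it follows the paper. The local contraction in $\dot Y^s$, the $L^2$ globalization by the energy identity, and the dependence on data all match. Your implicit-function-theorem step on short intervals, glued via Lemma~\ref{lem_Bougainproperty}(4), has the same content as the paper's fixed-point-plus-gluing argument together with the linearized well-posedness of Proposition~\ref{Prop-linear-1}.

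The genuine difference is how you obtain the a priori $\dot H^s$ bound for $0<s<1$.
\begin{itemize}
\item The paper derives it from nonlinear smoothing (Proposition~\ref{prop_nonlinearsmoothing}). The normal form identity \eqref{Identity-DBP} and the Erdo\u{g}an--Tzirakis multilinear estimates give $\|u(t)-S_a(t)u_0\|_{\dot H^s}\le C(\|u_0\|_{\dot L^2},\|f\|_{L^2(0,T;\dot H^s)})$, whence \eqref{a priori-estimate-3}.
\item You use persistence of regularity via the tame bilinear estimate, and that estimate is valid. For $k=k_1+k_2$ and $s\ge0$ one has $\langle k\rangle^s\lesssim\langle k_1\rangle^s+\langle k_2\rangle^s$. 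Since the $s=0$ estimate only involves $|\widehat u|$ and $|\widehat v|$, the output weight can be moved onto either factor. The factor $T^\theta$ comes from the same $X^{\cdot,1/3}$ refinement used in the proof of Lemma~\ref{Lemma-bilinear}.
\end{itemize}
One technical point: to avoid absorbing a $\dot Y^s$ norm not yet known to be finite, run the contraction in $\dot Y^0\cap\dot Y^s$, with the step length dictated by the $\dot Y^0$ radius alone.

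\emph{What your route buys.} It is more elementary (no normal form, no trilinear estimates) and works for every $s\ge 0$. However, it only gives a bound like $K^{T/\delta}\bigl(\|u_0\|_{\dot H^s}+\|f\|_{L^2(0,T;\dot H^s)}\bigr)$, with $K,\delta$ depending on the $L^2$ bound. This is exponential in $T$ and needs $u_0\in\dot H^s$ from the start.

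\emph{What the paper's route buys.} It is heavier and restricted to $s<1$, but it yields
\[
\|u(t)\|_{\dot H^s}\le e^{-at}\|u_0\|_{\dot H^s}+C\bigl(\|u_0\|_{\dot L^2},\|f\|_{L^2(0,T;\dot H^s)}\bigr).
\]
Here the nonlinear part depends on $u_0$ only through its $L^2$ norm, so it applies to merely $L^2$ data. This is exactly what is iterated over unit intervals to prove $(\mathbf{EAC})$ in Section~\ref{Section-AC}. Its linearized analogue (Proposition~\ref{Prop-linearized_nonlinearsmooothing}) drives the high-frequency dissipation. So in the paper, globalization in $\dot H^s$ is a by-product of a tool that is needed anyway.
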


While this result is not new, it is instructive to outline the key arguments:

\begin{itemize}
    \item[\tiny$\bullet$] In Section~\ref{Section-standardGWP}, we recall the definition and basic properties of the Bourgain spaces $X^{s,b}$, $Y^s$ and $Z^s$, which lead to the local well-posedness in $H^s$. The global well-posedness in $L^2$ is then a consequence of energy inequality.

    \item[\tiny$\bullet$] In Section~\ref{sec_normalform_nonlinearsmoothing}, we revisit the normal form transformation, which enables the proof of nonlinear smoothing (Proposition~\ref{prop_nonlinearsmoothing}): for any $u_0\in \dot{L}^2$ and $f\in L^2(0,T;\dot{H}^s)$, we have $u(t)-S_a(t) u_0\in \dot{H}^s$.
    This yields an a priori bound on $\|u\|_{\dot{H}^s}$ provided $u_0\in \dot{H}^s$, from which the global well-posedness in $\dot{H}^s$ follows immediately.
\end{itemize}

\begin{remark}\label{rmk_uu}
Proposition {\rm\ref{Prop-uncontrolled}} only guarantees uniqueness within the particular subspace $\dot{Y}^s_T$ of $C([0,T]; \dot{H}^s)$. It naturally raises the question of ``unconditional uniqueness", namely, whether the solution in $C([0,T];\dot{H}^s)$ is necessarily unique. An affirmative answer can be found in {\rm\cite{BIT-11}}.
\end{remark}

\subsection{Bourgain spaces and local well-posedness}\label{Section-standardGWP}

We follow \cite{Tao-06,ET-16,LRZ-10} to give a quick introduction to Bourgain spaces, which are space-time function spaces taking into account the dispersion relation $\tau=k^3$ of KdV equation.

\begin{definition}[$X^{s,b}$ space]\label{def_Bourgain}
For any $s,b\in \R$, the $X^{s,b}$ norm of tempered distribution $u$ on $\R\times \T$ is defined by
\begin{equation*}
\|u\|_{X^{s,b}}=\|\langle k\rangle^s \langle\tau-k^3\rangle^b \widehat{u}(\tau,k)\|_{l^2_k L^2_\tau}.
\end{equation*}
The Bourgain space $X^{s,b}$ consists of all such $u$ with finite $X^{s,b}$ norm. The subspace $\dot{X}^{s,b}$ of $X^{s,b}$ consists of elements with mean zero in space (i.e.~$\widehat{u}(\tau,0)\equiv 0$), and is equipped with the same norm $\|\cdot\|_{\dot{X}^{s,b}}=\|\cdot\|_{X^{s,b}}$. For any $T>0$, the restricted space to time interval $[0,T]$ is denoted by $X^{s,b}_T$, which is equipped with norm
\[\|u\|_{X^{s,b}_T}=\inf \{\|\tilde{u}\|_{X^{s,b}}; u=\tilde{u}|_{[0,T]\times \T}\}.\]
The mean-zero subspace $\dot{X}^{s,b}_T$ and the restriction $X^{s,b}_I$ to any interval $I\subset \R$ are defined similarly.
\end{definition}

It is readily seen that $X^{s,b}$ is a Hilbert space, and $X^{s_2,b_2}\hookrightarrow X^{s_1,b_1}$ for $s_1\le s_2$ and $b_1\le b_2$. These two properties (with trivial modifications) remain valid for $X^{s,b}_T$. 

A direct computation yields the alternative characterization
\[\|u\|_{X^{s,b}}=\|S(-t)u(t)\|_{H_t^b H_x^s}.\]
where $S(t):=e^{-t\partial_x^3}$. Heuristically, if $b>1/2$ and $u\in X^{s,b}$, then Sobolev embedding gives $S(-t)u(t)\in C_t H_x^s$, and consequently $u\in C_t H_x^s$. This suggests studying the well-posedness of dispersive PDEs in $X^{s,1/2+}$ (see, e.g.~\cite{Bourgain-book} for Schr\"odinger equation). 

Nevertheless, for KdV equation on the torus, the product estimate
\[\|\partial_x (uv)\|_{X^{0,b-1}}\lesssim \|u\|_{X^{0,b}}\|v\|_{X^{0,b}}\]
fails unless $b=1/2$. Meanwhile, $X^{s,1/2}$ is not embedded in $C_t H_x^s$. To circumvent this difficulty, Bourgain \cite{Bourgain-93} introduced modified spaces $Y^s$ and $Z^s$.

\begin{definition}[$Y^s$ and $Z^s$ spaces]\label{def_Bourgain_YZ}
    The Banach space $Y^s$ is defined as a subset of $X^{s,1/2}$ via the norm
\begin{equation*}
\|u\|_{Y^s}:=\|u\|_{X^{s,1/2}}+\|\langle k\rangle^s\widehat u(\tau,k)\|_{l^2_k L^1_\tau}.
\end{equation*}
Here the mixed norm $l^2_k L^1_\tau$ is defined by
\[\|f(\tau,k)\|_{l^2_k L^1_\tau}:=\bigl\| \|f(\tau,k)\|_{L^1_\tau}\bigr\|_{l^2_k}.\]
And the Banach space $Z^s$ is defined as a subset of $X^{s,-1/2}$ via the norm
\begin{equation}\label{def_Z^s}
\|u\|_{Z^s}:=\|u\|_{X^{s,-1/2}}+\|\langle k\rangle^s\langle \tau-k^3\rangle^{-1}\widehat u(\tau,k)\|_{l^2_k L^1_\tau}.
\end{equation}
The restricted spaces $Y^s_T$ and $Z^s_T$, together with their mean-zero counterparts $\dot{Y}^s_T$ and $\dot{Z}^s_T$, are defined analogously to $X^{s,b}_T$ and $\dot{X}^{s,b}_T$.
\end{definition}

The following basic properties make Bourgain spaces useful in the study of KdV equation.

\begin{lemma}\label{lem_Bougainproperty}
    Given $T>0$, $s>-1/2$ and $b\in (1/2,1)$, the following properties are valid.
	\begin{enumerate}
        \item[$(1)$] The identity maps induce continuous embeddings 
        \[X^{s,b}_T\hookrightarrow C([0,T];H^s),\quad Y^s_T\hookrightarrow C([0,T];H^s),\quad L^2(0,T;H^s)\hookrightarrow Z^s_T.\]
        
		\item[$(2)$] There exists a constant $C>0$, such that for any $u_0\in H^s$, 
		\[\|S_a(t)u_0\|_{Y^s_T}\leq C\|u_0\|_{H^s}.\]
        In addition, for any $f\in Z_T^s$ and $g\in X^{s,b}_T$,
        \[
		\left\|\int_0^t S_a(t-r)f(r)dr\right\|_{Y^s_T}\leq C\|f\|_{Z^s_T},\quad \left\|\int_0^t S_a(t-r)g(r)dr\right\|_{X^{s,b}_T}\leq C\|g\|_{X^{s,b-1}_T}.\]

        \item[$(3)$] There exists a constant $C>0$ (independent of $T$), such that for any $u,v\in \dot{X}_T^{s,1/2}$,
        \[\|\partial_x(uv)\|_{\dot{Z}^s_T}\le C (T^{1/6-}\wedge 1) \|u\|_{\dot{X}^{s,1/2}_T}\|v\|_{\dot{X}^{s,1/2}_T}\]
        (here we denote $b\pm=b\pm\varepsilon$ with $0<\varepsilon\ll 1$, and $a\wedge b:=\min \{a,b\}$).

        \item[$(4)$] Assume the intervals $I_1,\dots ,I_n$ are relatively open in $[0,T]$, and constitute a finite covering of $[0,T]$. Then there exists a constant $C>0$, such that if $u_j\in Y^s_{\overline{I_j}}$ and $u|_{I_j}=u_j$ for $1\le j\le n$, then
        \[\|u\|_{Y^s_T}\le C\sum_{j=1}^n \|u_j\|_{Y^s_{\overline{I_j}}}.\]
	\end{enumerate}
\end{lemma}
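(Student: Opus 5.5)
The plan is to prove the four items in turn, each by reduction to the case $a=0$ and to standard facts from \cite{Bourgain-93,Tao-06,LRZ-10}. The damping factor $e^{-at}$ is harmless on a bounded time interval. For $S_a(t)u_0$ we fix a time cutoff $\psi\in C_c^\infty(\R)$ with $\psi\equiv 1$ on $[0,T]$. We then note that $\psi(t)e^{-at}$ is a Schwartz-type multiplier on $[0,T]$, and multiplication by smooth compactly supported functions of $t$ is bounded on $X^{s,b}$, $Y^s$ and $Z^s$. This holds because in the $\tau$ variable it is convolution with an $L^1$ function with weights $\langle\tau\rangle^{|b|}$, and the $l^2_kL^1_\tau$ parts are preserved by Young's inequality. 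So it suffices to treat $S(t)$.

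For (1), we use $\|u\|_{X^{s,b}}=\|S(-t)u\|_{H^b_tH^s_x}$ and $H^b_t\hookrightarrow C_t$ for $b>1/2$. For $Y^s$, we write $u(t)=\sum_k e^{ikx}\int e^{it\tau}\widehat u(\tau,k)\,d\tau$. The $l^2_kL^1_\tau$ component then gives $\sup_t\|u(t)\|_{H^s}\le\|\langle k\rangle^s\widehat u\|_{l^2_kL^1_\tau}$, and continuity follows by dominated convergence. Restriction norms pass through by taking infima over extensions. For $L^2(0,T;H^s)\hookrightarrow Z^s_T$, we extend $f$ by zero, so $\langle \tau-k^3\rangle^{-1/2}\le1$ gives the $X^{s,-1/2}$ bound. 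The $L^1_\tau$ part is handled with Cauchy--Schwarz: $\int\langle\tau-k^3\rangle^{-1}|\widehat f|\,d\tau\le \|\langle\tau-k^3\rangle^{-1}\|_{L^2_\tau}\|\widehat f(\cdot,k)\|_{L^2_\tau}$.

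For (2), the free part is immediate, since $\widehat{\psi S u_0}(\tau,k)=\widehat\psi(\tau-k^3)\widehat u_0(k)$. For the Duhamel term we use the standard decomposition of $\psi(t)\int_0^tS(t-r)F(r)\,dr$ in the variable $\lambda=\tau-k^3$. The region $|\lambda|\lesssim1$ is handled by Taylor expanding $(e^{it\lambda}-1)/(i\lambda)$. The region $|\lambda|\gtrsim1$ splits into a term $\psi(t)S(t)\int\widehat F/(i\lambda)$, controlled in $Y^s$ by the $l^2_kL^1_\tau$ part of $\|F\|_{Z^s}$, and a term $\psi\cdot(\lambda^{-1}\widehat F)^\vee$, controlled by $\|F\|_{X^{s,-1/2}}$ in $X^{s,1/2}$ and by the $\langle\lambda\rangle^{-1}$ weighted $L^1$ part in $L^1_\tau$. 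This is the familiar estimate from \cite{Bourgain-93}, and the $X^{s,b}$ version with $b>1/2$ is the same computation with $b-1>-1/2$.

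Item (3) is the main point, and I expect it to be the main obstacle. The plan is to take Bourgain's bilinear estimate $\|\partial_x(uv)\|_{Z^s}\lesssim\|u\|_{X^{s,1/2}}\|v\|_{X^{s,1/2}}$ for mean-zero functions and $s>-1/2$ as known (see \cite{Bourgain-93,KPV}). Its proof uses the resonance identity $\tau-k^3-(\tau_1-k_1^3)-(\tau_2-k_2^3)=-3kk_1k_2$, which forces $\max\langle\lambda_j\rangle\gtrsim|kk_1k_2|$ once zero modes are excluded, together with the $L^4$ Strichartz bound $\|u\|_{L^4}\lesssim\|u\|_{X^{0,1/3}}$. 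To get the factor $T^{1/6-}$, we exploit the slack $1/2-1/3=1/6$: the estimate in fact holds with $X^{s,1/3+}$ norms on the right-hand side. We then apply the localization lemma $\|\psi(t/T)u\|_{X^{s,1/3+}}\lesssim T^{1/6-}\|u\|_{X^{s,1/2}}$ to extensions of $u,v$; this lemma fails at $b=1/2$ exactly, which is why we need the slight loss. The cap by $1$ for large $T$ comes from the untruncated estimate. The delicate part is checking that the $l^2_kL^1_\tau$ component of $Z^s$ also tolerates the reduced $b$ on the inputs. If direct verification fails, I would fall back on citing \cite[Prop.~?]{LRZ-10} or \cite{CKSTT-03}, where this refined form appears.

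For (4), we take a smooth partition of unity $\{\phi_j\}$ subordinate to the cover, with each $\phi_j$ smooth in $t$. For an extension $\tilde u_j$ of $u_j$, the function $\sum_j\phi_j\tilde u_j$ agrees with $u$ on $[0,T]$. By the boundedness of multiplication by smooth $t$-functions on $Y^s$ noted above, its norm is at most $C\sum_j\|\tilde u_j\|_{Y^s}$. Taking infima over the extensions completes the proof.
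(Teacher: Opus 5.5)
Your treatment of (1), (2) and (4) is sound and matches the paper. The paper cites \cite{ET-16,LRZ-10,Tao-06} for the undamped embeddings and Duhamel bounds, proves $L^2(0,T;H^s)\hookrightarrow Z^s_T$ by the same Cauchy--Schwarz step, and proves (4) by the same partition of unity. Your reduction of $a>0$ to $a=0$, by multiplying with $\psi(t)e^{\pm at}$, is a clean replacement for adapting \cite[Lemma~4.4]{LRZ-10}.

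The gap is in (3). You claim that Bourgain's estimate ``in fact holds with $X^{s,1/3+}$ norms on the right-hand side'' on both inputs. This is false, and it already fails for the $X^{s,-1/2}$ component, not just the $l^2_kL^1_\tau$ part you flag.

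A counterexample: take $\widehat v(\tau,k)=\mathbf{1}_{\{k=1\}}\mathbf{1}_{[1,2]}(\tau)$ and $\widehat u(\tau,k)=\mathbf{1}_{\{k=N\}}\mathbf{1}_{[A,A+1]}(\tau)$ with $A=(N+1)^3-2$. Then $u$ has modulation $\tau-k^3\approx 3N^2$, so $\|u\|_{X^{s,b}}\sim N^{s+2b}$ and $\|v\|_{X^{s,b}}\sim 1$. But $\widehat{uv}$ sits at $k=N+1$ with $|\tau-k^3|\le 1$, so $\|\partial_x(uv)\|_{X^{s,-1/2}}\sim N^{s+1}$. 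The ratio $N^{1-2b}$ is unbounded for $b=1/3+$.

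The structural reason comes from the case analysis driven by $\max_j\langle\tau_j-k_j^3\rangle\gtrsim\langle k_1\rangle\langle k_2\rangle\langle k_3\rangle$. Whichever input carries the largest modulation must come with the full power $1/2$ to absorb the derivative. Only the other two factors can be lowered to $1/3$ for the $L^4\cdot L^4\cdot L^2$ H\"older step.

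The correct refinement is the asymmetric bound
\[
\|\partial_x(uv)\|_{Z^s}\lesssim\|u\|_{X^{s,1/2}}\|v\|_{X^{s,1/3}}+\|u\|_{X^{s,1/3}}\|v\|_{X^{s,1/2}}.
\]
This is the shape of the paper's own \eqref{eq-bilinear-1} for the potential term, and of the result from \cite{ET-16} that the paper cites for (3). Applying Lemma~\ref{Lemma-Bourgain}(4) to the $X^{s,1/3}$ factor in each term then yields exactly $T^{1/6-}$.

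Note also that if your symmetric version were true, localizing both factors would give $T^{1/3-}$, not the $T^{1/6-}$ you state. That mismatch should have signalled the problem. Your fallback of citing the literature is what the paper actually does.
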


\begin{proof}[\bf Proof]
    The first two embeddings in (1) are easy consequences of Fourier inversion formula; see, e.g.~\cite[Lemma~3.9 and Section~3.2.2]{ET-16} for details. To verify the third embedding in (1), it suffices to prove $L^2(\R;H^s)\hookrightarrow Z^s$ and then take restrictions to time $[0,T]$. Indeed, owing to $X^{s,0}=L^2(\R;H^s)$, Cauchy--Schwarz inequality and $\|\langle\tau\rangle^{-1}\|_{L^2_\tau}<\infty$, we have
    \begin{align*}
        \|f\|_{Z^s}&\le \|f\|_{L^2(\R;H^s)}+\bigl\|\langle k\rangle^s \|\langle \tau-k^3\rangle^{-1}\|_{L^2_\tau} \|\widehat{f}(\tau,k)\|_{L^2_\tau}\bigr\|_{l^2_k}\\
        &\le \|f\|_{L^2(\R;H^s)}+C\|\langle k\rangle^s \widehat{f}(\tau,k)\|_{l_k^2 L_\tau^2}=C\|f\|_{L^2(\R;H^s)}.
    \end{align*}
    Next, (2) is standard when $a=0$ (i.e.~without damping); see, e.g.~\cite[Lemma~3.12, Lemma~3.15 and Lemma~3.16]{ET-16}. Meanwhile, the proof of (2) when $a>0$ can be adapted from \cite[Lemma~4.4]{LRZ-10}\footnote{In fact, \cite{LRZ-10} only considered $f=\partial_x (uv)$ and $s\ge 0$. However, the proof for general $f$ and $s>-1/2$ is verbatim.}. Furthermore, (3) is a special case of \cite[Theorem 3.19 and Corollary 3.20]{ET-16}.

    Finally, we turn to (4), which is believed to be standard (but we cannot find an explicit reference). The key point is the time-localization estimate: for any $\varphi\in C_c^\infty(\R)$, there exists a constant $C>0$, such that
    \[\|\varphi(t) v\|_{Y^s}\le C \|v\|_{Y^s},\quad \forall v\in Y^s.\]
    Indeed, this is well-known for $X^{s,b}$ norm; see, e.g.~\cite[Lemma 2.11]{Tao-06}. The proof therein can be trivially adapted to $Y^s$. With this inequality in hand, the rest of the proof is easy by partition of unity. We omit further details.
\end{proof}

These properties enable one to derive the local well-posedness of \eqref{Problem-nonlinear} in $\dot{Y}^s_\delta\subset C([0,\delta];\dot{H}^s)$ from some $\delta>0$, based on the standard fixed-point argument. In addition, multiplying \eqref{Problem-nonlinear} by $u$ and integrating over $\T$, we obtain the energy inequality
\begin{equation*}
\frac{d}{dt}\|u\|^2_{\dot{L}^2}+a\|u\|^2_{\dot{L}^2}\leq\frac{1}{a}\|f\|^2_{\dot{L}^2}.
\end{equation*}
Together with the Gronwall inequality, we find that for any $t\in [0,\delta]$,
\begin{equation}\label{a priori-estimate-1}
\|u(t)\|^2_{\dot{L}^2}\le e^{-at} \|u_0\|_{\dot{L}^2}^2+C\|f\|_{L^2(0,T; \dot{L}^2)}^2.
\end{equation}
This a priori estimate implies the global well-posedness at the scale of $L^2$ by continuity argument. Moreover, we have a qualitative bound in a more refined space $\dot{Y}^0_T$, namely
\begin{equation}\label{a priori-estimate-2}
\|u\|_{\dot{Y}^0_T}\le C(\|u_0\|_{\dot{L}^2},\|f\|_{L^2(0,T;\dot{L}^2)}).
\end{equation}
Indeed, the local well-posedness implies this estimate on any $[t,t+\delta]$ with $\delta>0$ depending only on $\|u_0\|_{\dot{L}^2}$ and $\|f\|_{L^2(0,T;\dot{L}^2)}$, and then Lemma~\ref{lem_Bougainproperty}(4) leads to the global version.

\medskip

Before ending this subsection, we collect some well-known properties of $X^{s,b}$ for easy reference (see, e.g.~\cite[Section 3.1]{LRZ-10}), which will be used in the remainder of this paper.

\begin{lemma}\label{Lemma-Bourgain}
The following properties are valid.
\begin{enumerate}
\item[$(1)$] If $T>0$, $r_1<r_2$ and $b_1<b_2$, then $X^{r_2,b_2}_T$ is compactly embedded into $X^{r_1,b_1}_T$.

\item[$(2)$] If $b\in [-1,1]$, then for any $\varphi\in C^\infty(\T)$, the multiplication 
\[X^{r,b}\ni u\mapsto \varphi(x)u\in X^{r-2|b|,b}\]
is bounded. Corresponding property remains valid for $X^{r,b}_T$.

\item[$(3)$] There exists a constant $C>0$ such that for any $u\in X^{0,1/3}$,
\[\|u\|_{L^4_{t,x}}\leq C\|u\|_{X^{0,1/3}}.\]

\item[$(4)$] If $-1/2<b_1<b_2<1/2$, then there exists a constant $C>0$, such that for any $T\in (0,1)$ and $u\in X^{r,b_2}_T$,
\begin{equation*}
\|u\|_{X^{r,b_1}_T}\le C T^{b_2-b_1} \|u\|_{X^{r,b_2}_T}.
\end{equation*}
\end{enumerate}
\end{lemma}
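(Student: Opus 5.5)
The plan is to prove the four properties of Lemma~\ref{Lemma-Bourgain} separately, working on the Fourier side with the weight $\langle\tau-k^3\rangle$. For each statement I would first prove it on $\R\times\T$ and then pass to the restricted spaces $X^{r,b}_T$. The passage uses the definition of the restriction norm as an infimum over extensions, together with a smooth time cut-off.

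\textbf{Step (1): compact embedding.} Take a bounded sequence in $X^{r_2,b_2}_T$ and choose extensions $\tilde u_n$ with comparable norm. Multiply by a fixed cut-off $\psi\in C_c^\infty(\R)$ equal to $1$ on $[0,T]$; multiplication by $\psi$ is bounded on $X^{s,b}$ for every $s,b$.
\begin{itemize}
\item Split $\widehat{\psi\tilde u_n}$ into the region $|k|+|\tau-k^3|\le M$ and its complement.
\item On the complement, the $X^{r_1,b_1}$ norm is bounded by $M^{-\min(r_2-r_1,\,b_2-b_1)}$ times the $X^{r_2,b_2}$ norm, so it is uniformly small.
\item On the bounded region, the support in $t$ is fixed. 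The Fourier transforms are therefore uniformly smooth in $\tau$ (Paley--Wiener), and Arzel\`a--Ascoli gives precompactness in $L^2$ over the bounded frequency set.
\end{itemize}
A diagonal argument then yields a Cauchy subsequence.

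\textbf{Step (2): multiplication by smooth $\varphi$.} Write $\varphi u=\sum_j\widehat\varphi(j)e^{ijx}u$ and estimate each modulation shift. The key inequality is
\[
\langle\tau-(k+j)^3\rangle\lesssim\langle\tau-k^3\rangle\,\langle j\rangle^3\langle k\rangle^2 .
\]
I would not use it crudely. Instead I would treat the endpoints $b=0$ and $b=\pm1$ and then interpolate.
\begin{itemize}
\item For $b=0$ the bound is trivial, with no loss in $r$.
\item For $b=1$, write $\|u\|_{X^{r,1}}\sim\|u\|_{L^2H^r}+\|(\partial_t+\partial_x^3)u\|_{L^2H^r}$. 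The commutator $[\partial_x^3,\varphi]$ has order $2$, so $\varphi u\in X^{r-2,1}$.
\item For $b=-1$, use duality with the case $b=1$.
\end{itemize}
Complex interpolation between $b=0$ and $b=\pm1$ then gives the loss $2|b|$.

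\textbf{Step (3): the $L^4$ Strichartz estimate.} This is Bourgain's $L^4$ estimate for KdV on $\T$. Write $\|u\|_{L^4}^2=\|u\bar u\|_{L^2}$ and decompose $u$ dyadically in modulation $\langle\tau-k^3\rangle\sim L$. For $k_1+k_2=k$ and $\tau_1+\tau_2=\tau$, one counts the pairs $(k_1,k_2)$. The relevant expression is
\[
\tau-k_1^3-k_2^3=\tau-k^3+3kk_1k_2 ,
\]
so for fixed $(\tau,k)$ the number of admissible $k_1$ is $\lesssim 1+(L_1L_2)^{1/2}$ by a divisor/quadratic-root argument. Cauchy--Schwarz then gives the bilinear bound $L_1^{1/3}L_2^{1/3}$ and summability over dyadic blocks. \emph{This counting step is the main obstacle.} Getting exactly the exponent $1/3$, rather than $1/2+$, requires care. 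I would follow Bourgain's argument or Tao's book at this point rather than re-derive it.

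\textbf{Step (4): gain of a power of $T$.} On $\R$, prove
\[
\|\psi(t/T)v\|_{X^{r,b_1}}\lesssim T^{b_2-b_1}\|v\|_{X^{r,b_2}} .
\]
Conjugating by $S(-t)$ reduces this to a purely temporal statement on $H^{b}_t$, applied for each $k$. The temporal estimate is the classical one, valid for $-1/2<b_1<b_2<1/2$ (see Tao, Lemma~2.11). Its proof splits the frequencies into $|\sigma|\lesssim T^{-1}$ and $|\sigma|\gtrsim T^{-1}$; in the high region the range of $b$ uses that $\psi(t/T)$ acts boundedly on $H^b$ for $|b|<1/2$, uniformly in $T$. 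Restricting to $[0,T]$ and taking the infimum over extensions then finishes the proof.
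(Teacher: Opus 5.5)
\textbf{What the paper does.} The paper gives no proof of this lemma. It lists the four properties as well known and refers to \cite[Section 3.1]{LRZ-10}.

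\textbf{Steps (1), (2), (4).} Here you supply arguments the paper only cites, and they are sound.
\begin{itemize}
\item[(1)] The high/low splitting works, together with Arzel\`a--Ascoli on the compact frequency region. The fixed time support of $\psi\tilde u_n$ gives uniform bounds on its Fourier transform and its $\tau$-derivative there, also when $b_2<0$, by pairing with $e^{-it\tau}\rho(t)$.
\item[(2)] The endpoints $b=0$ and $b=1$ are handled correctly, the latter via $\|u\|_{X^{r,1}}\sim\|u\|_{L^2H^r}+\|(\partial_t+\partial_x^3)u\|_{L^2H^r}$ and the order-two commutator $[\partial_x^3,\varphi]$. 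Duality gives $b=-1$, and Stein--Weiss interpolation of the weighted $L^2$ spaces gives exactly the loss $2|b|$.
\item[(4)] This is Tao's Lemma~2.11, transferred to the restriction norm with a cut-off $\psi(t/T)$, where $\psi=1$ on $[0,1]$.
\end{itemize}

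\textbf{Step (3) has a gap.} Deferring to Bourgain is legitimate, since the paper does the same. But the counting sketch you wrote does not produce the estimate, for three reasons.
\begin{itemize}
\item Your count $\lesssim 1+(L_1L_2)^{1/2}$ is false at $k=0$. There $\tau-k_1^3-k_2^3=\tau$, so $k_1$ is completely unconstrained. For $k\neq 0$ the correct count is $\lesssim 1+(L_{\max}/|k|)^{1/2}$, because $(k_1-k/2)^2$ must lie in an interval of length $\lesssim L_{\max}/|k|$.
\item Even granting your count, Cauchy--Schwarz against a $\tau_1$-set of measure $\lesssim L_{\min}$ gives $L_{\min}^{1/2}(L_1L_2)^{1/4}$. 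When $L_1=L_2=L$ this equals $L$, not $L^{2/3}$.
\item A bilinear bound of exactly $(L_1L_2)^{1/3}$ would not sum over dyadic pairs at $b=1/3$. It yields an $\ell^1$ sum, i.e.\ only $X^{0,1/3+}$. You need off-diagonal decay.
\end{itemize}

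\textbf{The missing idea: split the output frequency at $|k|\sim L_{\max}^{1/3}$.}
\begin{itemize}
\item For $|k|\ge L_{\max}^{1/3}$, the corrected count is $\lesssim 1+(L_{\max}/|k|)^{1/2}\lesssim L_{\max}^{1/3}$. Cauchy--Schwarz then gives $\|u_{L_1}u_{L_2}\|_{L^2}\lesssim L_{\min}^{1/2}L_{\max}^{1/6}\|u_{L_1}\|_{L^2}\|u_{L_2}\|_{L^2}$.
\item For $|k|<L_{\max}^{1/3}$, including $k=0$, do not count at all. Fix $k$ and apply Young's inequality in $\tau$, using $\|\widehat{u_{L_{\min}}}(\cdot,k_1)\|_{L^1_\tau}\lesssim L_{\min}^{1/2}\|\widehat{u_{L_{\min}}}(\cdot,k_1)\|_{L^2_\tau}$. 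Then apply Cauchy--Schwarz in $k_1$ and sum the squares over the $\lesssim L_{\max}^{1/3}$ admissible values of $k$. This gives the same bound.
\end{itemize}
Since $L_{\min}^{1/2}L_{\max}^{1/6}=(L_1L_2)^{1/3}(L_{\min}/L_{\max})^{1/6}$, Schur's test sums the dyadic pairs in $\|u\|_{L^4}^2=\|u^2\|_{L^2}\le\sum_{L_1,L_2}\|u_{L_1}u_{L_2}\|_{L^2}$ to $\lesssim\|u\|_{X^{0,1/3}}^2$. Either replace your sketch with this argument or drop it and simply cite Bourgain.
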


\subsection{Normal form transformation and nonlinear smoothing}\label{sec_normalform_nonlinearsmoothing}

Now we turn to normal form transformation, which uses integration by parts in time to produce large denominators from dispersion, at the cost of increasing the order of nonlinearity. As a result, the nonlinear part of solution can be represented by several multilinear operators with higher regularity. This method was applied to the well-posedness in \cite{BIT-11} and nonlinear smoothing in \cite{ET-13-kdv}.

Following \cite[Section~4.1.2]{ET-16}, let us define the mean-zero bilinear/trilinear operators $\mathcal{B},\mathcal I,\mathcal D$ for functions defined on $\T$ via their Fourier modes: for any $k\in \Z\setminus\{0\}$, set
\begin{equation}\label{Multilinear-operator}
\begin{aligned}
&\widehat{\mathcal{B}(f,g)}(k)=-\frac{1}{6}\sum_{k_1+k_2=k}\frac{\widehat{f}(k_1)\widehat{g}(k_2)}{k_1k_2},\\ 
&\widehat{\mathcal I(f,g,h)}(k)=\frac{i}{6}\left(
-\frac{1}{k}\widehat{f}(k)\widehat{g}(-k)\widehat{h}(k)+\widehat{f}(k)\sum_{|j|\neq |k|}\frac{1}{j}\widehat{g}(j)\widehat{h}(-j)
\right),\\
&\widehat{\mathcal D(f,g,h)}(k)=\frac{i}{6}\sum_{\substack{k_1+k_2+k_3=k\\(k_1+k_2)(k_2+k_3)(k_1+k_3)\not =0}}\frac{\widehat{f}(k_1)\widehat{g}(k_2)\widehat{h}(k_3)}{k_1}.
\end{aligned}
\end{equation}
The advantage is that the large denominators indicate smoothing effects for these operators.

The following formula, known as normal form transformation, provides an alternative expression for the solution. We also refer the reader to \eqref{identity-4} later for a similar formula concerning the linearized KdV equation, whose proof is in the same spirit.

\begin{lemma}[{\cite[Proposition 4.4 and (4.15)]{ET-16}\footnote{In fact, \cite{ET-16} also considered KdV equation with potential term $(Vu)_x$. The formula \eqref{Identity-DBP} below coincides with
(4.15) therein, while the notation of the term $\tilde{\rho}$ is replaced by $\mathcal{I}(u,u,u)$ (one easily see that these two expressions are equal, since $\widehat{u}(-j)=\overline{\widehat{u}(j)}$ for real $u$ and the summation over $j$ is canceled out). This more general notation $\mathcal{I}(f,g,h)$ is used for future convenience when studying control property in Section~\ref{Section-control}.}}]\label{Lemma-DBP}
Assume that $u\in C([0,T];\dot{L}^2)$ is a solution of \eqref{Problem-nonlinear} with $u_0\in \dot{L}^2$ and $f\in L^2(0,T;\dot{L}^2)$. Then for any $t\in [0,T]$,
\begin{equation}\label{Identity-DBP}
\begin{aligned}
&u(t)-S_a(t)u_0=S_a(t)\mathcal{B}(u_0,u_0)-\mathcal{B}(u(t),u(t))\\
&\quad +\int_0^t S_a(t-r) \left[-a\mathcal{B}(u,u)+2\mathcal{B}(u,f)+\mathcal I(u,u,u)+\mathcal{D}(u,u,u)+f(r)\right]dr.
\end{aligned}
\end{equation}
\end{lemma}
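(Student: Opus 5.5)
The plan is to derive \eqref{Identity-DBP} on the Fourier side via differentiation by parts, first for smooth data and then pass to the limit. We argue first for $u_0\in \dot{H}^3$ and $f\in C([0,T];\dot{H}^3)$, where the solution is classical, and afterwards approximate general $(u_0,f)\in \dot{L}^2\times L^2(0,T;\dot{L}^2)$. Set $v(t)=S_a(-t)u(t)$, i.e.~$\widehat v(t,k)=e^{at-ik^3t}\widehat u(t,k)$. Then for $k\neq 0$
\[
\partial_t\widehat v(k)=e^{at-ik^3t}\Bigl(-\tfrac{ik}{2}\sum_{k_1+k_2=k}\widehat u(k_1)\widehat u(k_2)+\widehat f(k)\Bigr).
\]
Writing $\widehat u(k_j)=e^{-ak_j\cdot 0}\,e^{-at+ik_j^3t}\widehat v(k_j)$, the nonlinear term carries the phase $e^{-at}e^{-it(k^3-k_1^3-k_2^3)}$ with $k^3-k_1^3-k_2^3=3kk_1k_2\neq 0$, since the mean-zero condition excludes $k_1=0$ and $k_2=0$.

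The first step is to integrate by parts in time, using $e^{-3ikk_1k_2t}=\partial_t\bigl(e^{-3ikk_1k_2t}\bigr)/(-3ikk_1k_2)$. The factor $-\tfrac{ik}{2}\cdot\tfrac{1}{-3ikk_1k_2}=\tfrac{1}{6k_1k_2}$ produces exactly the multiplier of $-\mathcal B$. Integrating from $0$ to $t$ and applying $S_a(t)$ then generates the boundary terms $S_a(t)\mathcal B(u_0,u_0)-\mathcal B(u(t),u(t))$. We also have to keep track of the extra damping factor: the bilinear term carries $e^{-at}$ relative to $\widehat v$ products, so differentiating $e^{-at}$ yields the $-a\mathcal B(u,u)$ term in the integrand.

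The second step is the time derivative falling on $\widehat v(k_1)\widehat v(k_2)$. By symmetry it suffices to substitute the equation for $\partial_t\widehat v(k_1)$. The forcing part gives $2\mathcal B(u,f)$, with the factor $2$ coming from symmetry. The quadratic part gives a trilinear sum
\[
\tfrac{i}{6}\sum \frac{k_1}{k_1k_2}\,(\cdots)=\tfrac{i}{6}\sum_{k_1+k_2+k_3=k}\frac{\widehat u(k_1)\widehat u(k_2)\widehat u(k_3)}{k_3}
\]
after relabeling. This sum is split according to whether $(k_1+k_2)(k_2+k_3)(k_1+k_3)\neq0$; that part is $\mathcal D(u,u,u)$ once the variables are renamed so that the denominator sits on $k_1$. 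The resonant part, where some pair sums to zero, is collected into $\mathcal I(u,u,u)$. This requires careful inclusion–exclusion over the three coincidence sets (e.g.~$k_2=-k_3$ forces $k_1=k$, giving $\widehat u(k)\sum_j \widehat u(j)\widehat u(-j)/j$-type terms), with the doubly-resonant configuration $|j|=|k|$ subtracted once to produce the $-\tfrac1k\widehat u(k)\widehat u(-k)\widehat u(k)$ term. The linear forcing term $\widehat f$ simply gives $\int_0^tS_a(t-r)f\,dr$.

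The final step is the passage to the limit. For $u\in C([0,T];\dot L^2)$ the operator $\mathcal B$ is bounded $\dot L^2\times\dot L^2\to \dot H^{1}$ (indeed $\sum|\widehat f(k_1)||\widehat g(k_2)|/|k_1k_2|\le\|f\|\|g\|$ pointwise). By Proposition~\ref{Prop-uncontrolled}, the solution depends continuously on the data in $\dot Y^0_T$. We need $\mathcal I,\mathcal D$ to be continuous from $(\dot Y^0_T)^3$, or at least from $C([0,T];\dot L^2)^3$, into a space like $L^1(0,T;H^{-N})$, and this is the main obstacle. For $\mathcal I$ it is trivial, since $|\sum_j \widehat g(j)\widehat h(-j)/j|\le\|g\|\|h\|$. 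For $\mathcal D$ the sum $\sum|\widehat u(k_1)\widehat u(k_2)\widehat u(k_3)|/|k_1|$ is not absolutely convergent for $L^2$ data, so I would argue in the weak sense. Testing against finitely many Fourier modes, i.e.~for fixed $k$, I would first use Cauchy–Schwarz in $k_2$ (with $k_3=k-k_1-k_2$) to bound the sum by $\sum_{k_1}|\widehat u(k_1)|/|k_1|\cdot\|u\|^2\le C\|u\|_{L^2}^3$. This gives coefficient-wise continuity, which suffices to pass to the limit in each Fourier mode of the identity and conclude.
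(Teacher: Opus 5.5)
The paper does not reprove this lemma (it cites \cite{ET-16}), but your Fourier-side differentiation by parts is the same computation the paper carries out for the linearized analogue \eqref{identity-4}. Your algebra is right: the phase $3kk_1k_2$, the multiplier $\frac{1}{6k_1k_2}$ of $-\mathcal{B}$, the boundary terms, the $-a\mathcal{B}(u,u)$ and $2\mathcal{B}(u,f)$ terms, and the factor $\frac{i}{6}$ on the trilinear sum.

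One bookkeeping point should be made explicit. In the trilinear sum, the inner pair adds up to the old frequency $k_1\neq 0$, because $\widehat{v}(0)=0$. So, with the denominator on $k_3$, the constraint $k_1+k_2\neq 0$ must be kept, and only two of your ``three coincidence sets'' actually occur. Including $\{k_1+k_2=0\}$ would create a spurious nonzero term proportional to $\frac{1}{k}\widehat{u}(k)\|u\|_{L^2}^2$. In addition, the two single-resonance sets together contribute $2\widehat{u}(k)\sum_{|j|\neq|k|}|\widehat{u}(j)|^2/j$. This matches the coefficient $1$ in $\mathcal{I}(u,u,u)$ only because the sum vanishes for real $u$ by oddness, which is the cancellation noted in the footnote.

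The one real gap is the limiting step. Approximating $(u_0,f)$ by smooth data and using Proposition~\ref{Prop-uncontrolled} proves \eqref{Identity-DBP} only for the solution lying in $\dot{Y}^0_T$. The lemma, however, is stated for an arbitrary mild solution $u\in C([0,T];\dot{L}^2)$. To transfer the identity you would need unconditional uniqueness (Remark~\ref{rmk_uu}, \cite{BIT-11}).

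Simpler is to drop the approximation altogether:
\begin{itemize}
\item For a mild solution, each $t\mapsto\widehat{u}(t,k)$ is absolutely continuous and satisfies the Fourier ODE a.e., since $\widehat{u^2}(\cdot,k)$ is continuous and $\widehat{f}(\cdot,k)\in L^1(0,T)$.
\item At fixed $k$, every sum in the computation converges absolutely. The bounds are $\|u\|_{L^2}^2$, $\|u\|_{L^2}^3$, or $\|u\|_{L^2}\|f\|_{L^2}$, each uniform or integrable in time.
\end{itemize}
Your own estimate $\sum_{k_1}\frac{|\widehat{u}(k_1)|}{|k_1|}\|u\|_{L^2}^2\lesssim\|u\|_{L^2}^3$ shows exactly this. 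So your assertion that the $\mathcal{D}$-sum is not absolutely convergent for $L^2$ data is wrong at fixed $k$, and no summability in $k$ is ever needed. Termwise differentiation, integration in time and Fubini are therefore justified directly for $L^2$ solutions. This is essentially how \cite{BIT-11} obtain unconditional uniqueness in the first place.
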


Then we state the smoothing properties for $\mathcal{B},\mathcal{I},\mathcal{D}$. Although the operators are defined in a slightly more general way than \cite{ET-16}, the proof is still valid and hence omitted.

\begin{lemma}[{\cite[Lemma 4.5 and Proposition 4.7]{ET-16}}]\label{Lemma-BID}
Given $s\ge 0$ and $\sigma\in (0,1)$, the following properties are valid.
\begin{enumerate}
\item[$(1)$] There exists a constant $C>0$ such that for any $f,g,h\colon \T\to \R$ of mean zero,
\begin{equation*}
\begin{aligned}
& \|\mathcal{B}(f,g)\|_{\dot{H}^{s+\sigma}}\leq C\|f\|_{\dot{H}^s}\|g\|_{\dot{H}^s},\\
&\|\mathcal I(f,g,h)\|_{\dot{H}^{s+\sigma}}\leq C\|f\|_{\dot{H}^{s+\sigma}}\|g\|_{\dot{H}^{s}}\|h\|_{\dot{H}^s}.
\end{aligned}
\end{equation*}
In particular, if $f=g=h$, then the term $\mathcal{I}(f,f,f)$ reduces to
\[\widehat{\mathcal{I}(f,f,f)}(k)=-\frac{i}{6}\cdot \frac{1}{k} \widehat{f}(k)|\widehat{f}(k)|^2,\quad k\in \Z\setminus \{0\},\]
and satisfies a stronger estimate
\[\|\mathcal I(f,f,f)\|_{\dot{H}^{s+\sigma}}\leq C\|f\|_{\dot{H}^s}^3.\]
		
\item[$(2)$] For any $\varepsilon>0$ sufficiently small, there exists a constant $C>0$, such that for any $f,g,h\colon [0,T]\times \T\to \R$ of mean zero in space,
\[\|\mathcal D(f,g,h)\|_{\dot{X}_T^{s+\sigma,-1/2+\varepsilon}}\leq C\|f\|_{\dot{X}^{s,1/2}_T}\|g\|_{\dot{X}^{s,1/2}_T}\|h\|_{\dot{X}^{s,1/2}_T}.\]
\end{enumerate}
\end{lemma}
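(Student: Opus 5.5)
The plan is to work directly on the Fourier side and prove each estimate by a weighted Cauchy--Schwarz or Young argument, exploiting the large denominators. For mean-zero $f$, every Fourier mode in the sums has $|k_i|\ge 1$, so the denominators never vanish and can be compared with $\langle k\rangle$.

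\emph{Estimate for $\mathcal{B}$.} Since $k=k_1+k_2$ with $k_1,k_2\neq0$, we have $\max(|k_1|,|k_2|)\ge |k|/2$. Assume by symmetry that $|k_1|\ge|k|/2$. Then
\[
\langle k\rangle^{s+\sigma}\frac{1}{|k_1k_2|}\lesssim \frac{\langle k_1\rangle^{s}\langle k_2\rangle^s}{\langle k_1\rangle^{1-\sigma}\langle k_2\rangle^{1+s}}\lesssim \frac{\langle k_1\rangle^{s}\langle k_2\rangle^{s}}{\langle k_2\rangle},
\]
using $\sigma<1$ and $s\ge0$. The remaining sum $\sum_{k_2}\langle k_2\rangle^{-1}|F(k-k_2)||G(k_2)|$, with $F=\langle\cdot\rangle^s\widehat f$ and $G=\langle\cdot\rangle^s\widehat g$, is controlled in $l^2_k$ by Young's inequality: $\|F\|_{l^2}\,\|\langle\cdot\rangle^{-1}G\|_{l^1}\le \|F\|_{l^2}\|\langle\cdot\rangle^{-1}\|_{l^2}\|G\|_{l^2}$. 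The gain is actually $\sigma$ up to $1$, which is more than needed.

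\emph{Estimate for $\mathcal I$.} The first term is $\frac1k\widehat f(k)\widehat g(-k)\widehat h(k)$. Bound $|\widehat g(-k)\widehat h(k)|\le\|g\|_{L^2}\|h\|_{L^2}$, so its $\dot H^{s+\sigma}$ norm is at most $\|f\|_{\dot H^{s+\sigma}}\|g\|_{L^2}\|h\|_{L^2}$. The second term is $\widehat f(k)$ times a constant $c=\sum_j j^{-1}\widehat g(j)\widehat h(-j)$, and $|c|\le\|g\|_{L^2}\|h\|_{L^2}$. Hence this term is bounded by $\|f\|_{\dot H^{s+\sigma}}\|g\|_{\dot H^s}\|h\|_{\dot H^s}$. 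When $f=g=h$ are real, $\widehat f(-j)=\overline{\widehat f(j)}$, so $j^{-1}|\widehat f(j)|^2$ is odd in $j$. The sum over $|j|\neq|k|$ then cancels, leaving $-\frac{i}{6k}\widehat f(k)|\widehat f(k)|^2$. For the stronger bound, write $\langle k\rangle^{s+\sigma}|k|^{-1}|\widehat f(k)|^3\le \langle k\rangle^{s}|\widehat f(k)|\cdot\|f\|_{L^2}^2$, which uses $\sigma\le1$.

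\emph{Estimate for $\mathcal D$.} This is the genuine obstacle, and I would follow the $X^{s,b}$ trilinear argument of Erd\u{o}gan--Tzirakis. The first step is to write the space-time Fourier transform as a convolution over $\tau_1+\tau_2+\tau_3=\tau$ and $k_1+k_2+k_3=k$. The key algebraic identity is
\[
(\tau-k^3)-\sum_i(\tau_i-k_i^3)=-3(k_1+k_2)(k_2+k_3)(k_1+k_3),
\]
which is nonzero on the summation set. Consequently one of the modulations $\langle\tau-k^3\rangle$ or $\langle\tau_i-k_i^3\rangle$ is at least $|(k_1+k_2)(k_2+k_3)(k_3+k_1)|$. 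The proof then proceeds by duality, testing against $\phi\in X^{-s-\sigma,1/2-\varepsilon}$ and splitting into cases according to which modulation is largest. In each case, that modulation supplies a power of the resonance product, which beats the loss $\langle k\rangle^{\sigma}/|k_1|$ times the Sobolev weights. The remaining three functions are then estimated by the $L^4_{t,x}$ Strichartz estimate of Lemma~\ref{Lemma-Bourgain}(3) for $X^{0,1/3}$, together with $L^2$.

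The delicate subcases are those in which some $|k_i+k_j|$ is small while the $k_i$ are large. There only the factor $1/|k_1|$ and the remaining resonance factors help, and a counting bound over the lattice points with fixed $k_1+k_2$ is needed. Finally, restriction to $[0,T]$ follows from the definition of restricted norms by taking extensions.
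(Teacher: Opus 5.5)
Your part (1) is correct and complete. One harmless slip: $\sum_{|j|\neq|k|}j^{-1}\widehat g(j)\widehat h(-j)$ is not a constant, since it depends on $k$ through the excluded indices, but your bound $\|g\|_{L^2}\|h\|_{L^2}$ is uniform in $k$.

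The gap is in part (2), which is the only nontrivial estimate. The mechanism you describe does not close, and the step that would carry the proof is left as an unspecified ``counting bound''. After duality the form is quadrilinear in $f_1,f_2,f_3$ and the test function. Write $R=(k_1+k_2)(k_2+k_3)(k_1+k_3)$.

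Suppose you spend the full weight of the largest modulation to gain $\langle R\rangle^{1/2-\varepsilon}$. That factor then sits in $L^2_{t,x}$, and the other three must be controlled in $L^6_{t,x}$ (or $L^4\times L^4\times L^\infty$). The periodic $L^6$ Strichartz bound needs $b>1/2$ and an $\epsilon$-loss of derivatives, and $L^\infty_{t,x}$ is not controlled by $X^{0,1/2}$. So ``$L^4$ together with $L^2$'' cannot finish.

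Suppose instead you put all four factors in $L^4$ via $X^{0,1/3}\hookrightarrow L^4$. Then only about $\langle R\rangle^{1/6}$ of spare modulation weight is guaranteed. For $k_1=k_3=1$, $k_2=k-2$, the loss is $\langle k\rangle^{\sigma}/|k_1|\sim|k|^{\sigma}$, while $\langle R\rangle^{1/6}\sim|k|^{1/3}$. So the $L^4$ estimate alone reaches only $\sigma<1/3$. That falls short of the stated range $\sigma<1$, and even of the range up to $1/2$ that the paper uses for $\dot H^{1/4+\sigma}$.

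The idea that makes (2) work, and that underlies the cited Erdo\u{g}an--Tzirakis estimate, avoids both Strichartz and the case split. Fix the output $(\tau,k)$ and apply Cauchy--Schwarz in the convolution variables. This reduces the claim to a $\sup_{\tau,k}$ of a weighted sum of the squared multiplier.

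Since the input weights square to $\langle\tau_j-k_j^3\rangle^{-1}$, the bound $\int_{\R}\langle x-a\rangle^{-1}\langle x-b\rangle^{-1}dx\lesssim\langle a-b\rangle^{-1}\log(2+|a-b|)$ gives
$\iint\prod_{j=1}^3\langle\tau_j-k_j^3\rangle^{-1}d\tau_1d\tau_2\lesssim\langle\tau-k_1^3-k_2^3-k_3^3\rangle^{-1+\varepsilon}$.
Combine this with the output weight $\langle\tau-k^3\rangle^{-1+2\varepsilon}$, using $\tau-k_1^3-k_2^3-k_3^3=(\tau-k^3)+3R$ and $\langle A\rangle\langle A+B\rangle\gtrsim\langle B\rangle$. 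This yields $\langle R\rangle^{-1+2\varepsilon}$ regardless of which modulation dominates.

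Using $\langle k\rangle^{s}\lesssim\langle k_1\rangle^{s}\langle k_2\rangle^{s}\langle k_3\rangle^{s}$ for $s\ge0$, everything reduces to
\[
\sup_{k}\ \langle k\rangle^{2\sigma}\sum_{\substack{k_1+k_2+k_3=k\\ R\neq0}}\frac{1}{k_1^{2}\langle R\rangle^{1-2\varepsilon}}\lesssim\sup_k\langle k\rangle^{2\sigma-2+C\varepsilon}<\infty .
\]
To evaluate the lattice sum, set $m_1=k-k_1$ and $m_2=k-k_2$, so that $R=m_1m_2(2k-m_1-m_2)$. Summing first in $m_2$ gives $\langle 2k-m_1\rangle^{-1+C\varepsilon}$. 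The remaining sum in $m_1$ is dominated by $m_1$ near $k$ and is $\lesssim\langle k\rangle^{-2+C\varepsilon}$.

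This proves (2) for every $\sigma<1$ once $\varepsilon$ is small enough depending on $\sigma$, consistent with the smallness requirement in the statement.
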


With the above results, we obtain the following nonlinear smoothing property. The proof is similar to \cite[Theorem 4.3]{ET-16}. For the reader's convenience, we carry out the details.

\begin{proposition}[Nonlinear smoothing]\label{prop_nonlinearsmoothing}
Given $T>0$ and $s\in (0,1)$, and let $u\in Y^0_T$ be the solution of \eqref{Problem-nonlinear} with initial data $u_0\in \dot{L}^2$ and source term $f\in L^2(0,T;\dot{H}^s)$, then
\[\|u(t)-S_a(t) u_0\|_{\dot{H}^s}\le C(\|u_0\|_{\dot{L}^2},\|f\|_{L^2(0,T;\dot{H}^s)}),\quad \forall t\in [0,T].\]
\end{proposition}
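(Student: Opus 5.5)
We estimate each term of the normal form identity \eqref{Identity-DBP} in $\dot{H}^s$, using only $L^2$-level control of $u$. From the energy estimate \eqref{a priori-estimate-1} we have $\sup_{t\in[0,T]}\|u(t)\|_{\dot{L}^2}\le C(\|u_0\|_{\dot{L}^2},\|f\|_{L^2(0,T;\dot{L}^2)})$. From \eqref{a priori-estimate-2} we have $\|u\|_{\dot{Y}^0_T}\le C(\cdot)$, and hence $\|u\|_{\dot{X}^{0,1/2}_T}\le C(\cdot)$. Since $\|f\|_{L^2\dot{L}^2}\le\|f\|_{L^2\dot{H}^s}$, all constants depend only on the two quantities in the statement.

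\textbf{Boundary terms.} Apply Lemma~\ref{Lemma-BID}(1) with $s=0$ and $\sigma=s$. Then $\|\mathcal{B}(u(t),u(t))\|_{\dot{H}^s}\lesssim \|u(t)\|_{\dot{L}^2}^2$. Using \eqref{S_a(t)decay}, we also get $\|S_a(t)\mathcal{B}(u_0,u_0)\|_{\dot{H}^s}\lesssim\|u_0\|_{\dot{L}^2}^2$.

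\textbf{Integrand terms other than $\mathcal{D}$.} Since $\|S_a(t-r)\|_{\mathcal L(\dot{H}^s)}\le1$, we bound
\[
\int_0^t\Bigl(a\|\mathcal{B}(u,u)\|_{\dot{H}^s}+2\|\mathcal{B}(u,f)\|_{\dot{H}^s}+\|\mathcal{I}(u,u,u)\|_{\dot{H}^s}+\|f\|_{\dot{H}^s}\Bigr)dr.
\]
The pieces are controlled as follows.
\begin{itemize}
\item $\|\mathcal{B}(u,u)\|_{\dot{H}^s}\lesssim \|u\|_{\dot{L}^2}^2$ by Lemma~\ref{Lemma-BID}(1).
\item $\|\mathcal{B}(u,f)\|_{\dot{H}^s}\lesssim\|u\|_{\dot{L}^2}\|f\|_{\dot{L}^2}$, which is integrable by Cauchy--Schwarz in time.
\item $\|\mathcal{I}(u,u,u)\|_{\dot{H}^s}\lesssim\|u\|_{\dot{L}^2}^3$ by the stronger diagonal estimate in Lemma~\ref{Lemma-BID}(1).
\item $\int_0^T\|f\|_{\dot{H}^s}\,dr\le T^{1/2}\|f\|_{L^2(0,T;\dot{H}^s)}$.
\end{itemize}
Each of these is bounded uniformly in $t\in[0,T]$.

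\textbf{The $\mathcal{D}$ term (main obstacle).} Here no pointwise-in-time bound is available, since $\mathcal{D}$ only gains in the Bourgain scale. Lemma~\ref{Lemma-BID}(2) with $\sigma=s$ gives
\[
\|\mathcal{D}(u,u,u)\|_{\dot{X}^{s,-1/2+\varepsilon}_T}\lesssim\|u\|_{\dot{X}^{0,1/2}_T}^3.
\]
Choose $b=1/2+\varepsilon\in(1/2,1)$ and apply the second estimate of Lemma~\ref{lem_Bougainproperty}(2) with $g=\mathcal{D}(u,u,u)$ and $b-1=-1/2+\varepsilon$. This places the Duhamel integral $\int_0^tS_a(t-r)\mathcal{D}\,dr$ in $\dot{X}^{s,1/2+\varepsilon}_T$. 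The embedding in Lemma~\ref{lem_Bougainproperty}(1) then puts it in $C([0,T];\dot{H}^s)$, with norm at most $C\|u\|_{\dot{X}^{0,1/2}_T}^3$.

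One technical point needs care: \eqref{Identity-DBP} was stated for $u\in C([0,T];\dot{L}^2)$, while the Bourgain bounds refer to restriction norms. Both are compatible, because the unique solution lies in $\dot{Y}^0_T$ by Proposition~\ref{Prop-uncontrolled}. Collecting all the terms gives the claimed bound, uniform in $t\in[0,T]$.
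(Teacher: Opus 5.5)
Your proposal is correct and is essentially the paper's proof: the same normal form identity \eqref{Identity-DBP}, Lemma~\ref{Lemma-BID} with $(s,\sigma)=(0,s)$, and the same handling of the $\mathcal{D}$-term via $\dot{X}^{s,-1/2+\varepsilon}_T\to\dot{X}^{s,1/2+\varepsilon}_T\hookrightarrow C([0,T];\dot{H}^s)$. The only deviation is that you bound the forcing Duhamel term by Minkowski's inequality and $T^{1/2}\|f\|_{L^2(0,T;\dot{H}^s)}$ instead of the paper's $\dot{Y}^s_T$--$\dot{Z}^s_T$ estimate, which is equally valid and slightly more elementary.
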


\begin{proof}[\bf Proof]
    We take the parameters $s$ and $\sigma$ in Lemma~\ref{Lemma-BID} as $0$ and $s$, respectively. In particular, Lemma~\ref{Lemma-BID}(2) holds for some constant $\varepsilon>0$. Note that $\dot{Y}^0_T$ is embedded into $L^\infty(0,T;\dot{L}^2)$ and $\dot{X}_T^{0,1/2}$. Taking also Lemma~\ref{lem_Bougainproperty}(1)(2) and \eqref{S_a(t)decay}, \eqref{a priori-estimate-2}, \eqref{Identity-DBP} into account, it follows that
    \begin{align*}
        \|u(t)-S_a(t)u_0\|_{\dot{H}^s}&\lesssim e^{-at}\|\mathcal{B}(u_0,u_0)\|_{\dot{H}^s}+\|\mathcal{B}(u(t),u(t))\|_{\dot{H}^s}\\
        &\quad +\int_0^t e^{-a(t-r)} (\|\mathcal{B}(u,u)\|_{\dot{H}^s}+\|\mathcal{B}(u,f)\|_{\dot{H}^s}+\|\mathcal{I}(u,u,u)\|_{\dot{H}^s})\, dr\\
        &\quad +\left\|\int_0^t S_a(t-r) \mathcal{D}(u,u,u) dr\right\|_{\dot{X}^{s,1/2+\varepsilon}_T}+\left\|\int_0^t S_a(t-r) f(r) dr\right\|_{\dot{Y}^s_T}\\
        &\lesssim \|u_0\|_{\dot{L}^2}^2+\|u(t)\|_{\dot{L}^2}^2+\int_0^t e^{-a(t-r)}(\|u\|_{\dot{L}^2}^2+\|u\|_{\dot{L}^2}\|f\|_{\dot{L}^2}+\|u\|_{\dot{L}^2}^3) dr\\
        &\quad +\|\mathcal{D}(u,u,u)\|_{\dot{X}^{s,-1/2+\varepsilon}_T}+\|f\|_{\dot{Z}^s_T}\\
        &\le C(\|u_0\|_{\dot{L}^2},\|f\|_{L^2(0,T;\dot{H}^s)})+\|u\|_{X^{0,1/2}_T}^3\le C(\|u_0\|_{\dot{L}^2},\|f\|_{L^2(0,T;\dot{H}^s)}).
    \end{align*}
    Now the proof is complete.
\end{proof}

Specifically, if $u_0\in \dot{H}^s$, then nonlinear smoothing implies an a priori bound
\begin{equation}\label{a priori-estimate-3}
    \|u(t)\|_{\dot{H}^s}\le \|u(t)-S_a(t)u_0\|_{\dot{H}^s}+\|S_a(t)u_0\|_{\dot{H}^s}\le C(\|u_0\|_{\dot{H}^s},\|f\|_{L^2(0,T;\dot{H}^s)}).
\end{equation}
Thus we can extend the local $H^s$ solution to a global one by continuity argument. To fulfill the justification of Proposition~\ref{Prop-uncontrolled}, we point out that the local Lipschitz continuity of solution map on small intervals $[t,t+\delta]$ is a by-product of the fixed point argument, which then extends to $[0,T]$ by Lemma~\ref{lem_Bougainproperty}(4). In addition, the $C^1$ regularity of solution map follows from the well-posedness of linearized equation; see Proposition~\ref{Prop-linear-1}.

\section{Stabilization along trajectory by localized force}\label{Section-control}

This section is devoted to a control property associated with the coupling condition in the mixing criterion. Let us introduce the controlled KdV equation
\begin{equation}\label{Control-nonlinearproblem}
\begin{cases}
\partial_t u+\partial^3_x u+au+\tfrac{1}{2}\partial_x(u^2)=h+\chi\mathcal{P}_N\xi,\quad t\in[0,1],\\
u(0,\cdot)=u_0.
\end{cases}
\end{equation}
Here, $h$ is a given external force and $\chi \mathcal{P}_N \xi$ is the control term, in which $\chi$ is the space-time localization \eqref{eq-chi}, $\mathcal{P}_N$ is the orthogonal projection to the low frequencies \eqref{eq-PN}, and $\xi\in L^2(t_1,t_2;\dot{L}^2(s_1,s_2))$ is the control to be chosen. Note that for any $\varepsilon>0$,
\[\chi \mathcal{P}_N \xi \in L^2(0,1; \dot{H}^{1/2-\varepsilon}(\T)).\]
When $\xi\equiv 0$, the system reduces to the uncontrolled one
\begin{equation}\label{Uncontrolled-Problem}
\begin{cases}
\partial_t w+\partial^3_xw+aw+\tfrac{1}{2}\partial_x(w^2)=h,\quad t\in[0,1],\\
w(0,\cdot)=w_0.
\end{cases}
\end{equation}
In addition to its importance to the study of mixing, the following result is also of independent interest in control theory (referred to as local stabilization).

\begin{theorem}[Local stabilization]\label{Theorem-control}

Given $q\in (e^{-a},1)$, $\sigma\in (0,1/4)$ and $R>0$, there exist constants $d\in (0,1)$ and $N\in \N^+$, such that the following assertions hold.

\begin{enumerate}
\item[$(1)$] If $w\in \dot{Y}^{1/4+\sigma}_1$ is the solution of uncontrolled system \eqref{Uncontrolled-Problem} with initial data $w_0$ and source term $h$ satisfying
\[\|w_0\|_{\dot{H}^{1/4+\sigma}}< R,\quad \|h\|_{L^2(0,1;\dot{H}^{1/4+\sigma})}<R,\]
then for any $u_0\in \dot{L}^2(\T)$ with $\|u_0-w_0\|_{\dot{L}^2}\le d$, there exists a control $\xi$ such that 
\begin{equation}\label{control-bound}
\int_{t_1}^{t_2} \|\xi(t)\|_{\dot{L}^2(s_1,s_2)}^2 dt\le C(q,\sigma,R) \|u_0-w_0\|_{\dot{L}^2}^2,
\end{equation}
and the solution $u\in \dot{Y}^0_1$ of controlled system \eqref{Control-nonlinearproblem} satisfies
\begin{equation}\label{Squeezing-0}
\|u(1)-w(1)\|_{\dot{L}^2}\leq q \|u_0-w_0\|_{\dot{L}^2}.
\end{equation}

\item[$(2)$] The control $\xi$ in {\rm(1)} can be represented as
\[\xi=\Phi(w_0,h)(u_0-w_0),\]
where the map (determined by $q,\sigma,R$)
\[\Phi\colon B_{\dot{H}^{1/4+\sigma}}(R)\times B_{L^2(0,1;\dot{H}^{1/4+\sigma})}(R)\rightarrow \mathcal{L}(\dot{L}^2(\T);L^2(t_1,t_2;\dot{L}^2(s_1,s_2)))\]
is Lipschitz and continuously differentiable.
\end{enumerate}
\end{theorem}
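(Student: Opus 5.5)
The plan is to reduce to a linear stabilization problem for the linearized equation \eqref{Problem-linearcontrol-0} around $w$, and then absorb the nonlinear remainder by a perturbation argument. Write $v_0=u_0-w_0$.

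\textbf{Linear problem.} I first construct a linear map $\xi=\Phi(w_0,h)v_0$. The solution $v$ of
\[
\partial_tv+\partial_x^3v+av+\partial_x(wv)=\chi\mathcal P_N\xi,\qquad v(0)=v_0,
\]
should then satisfy $\|v(1)\|_{\dot L^2}\le q_0\|v_0\|_{\dot L^2}$ for some $q_0\in(e^{-a},q)$. This is done by frequency splitting at a level $m$.

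\emph{High frequencies.} Here I use the linearized nonlinear smoothing (Proposition~\ref{Prop-linearized_nonlinearsmooothing}). A normal form transformation, applied to $\partial_x(wv)$ as in Lemma~\ref{Lemma-DBP}, shows that $v(t)-S_a(t)v_0-\int_0^tS_a(t-r)\chi\mathcal P_N\xi\,dr$ lies in $\dot H^{s}$ for some $s>0$. Its norm is bounded by $C(R)\bigl(\|v_0\|_{\dot L^2}+\|\xi\|_{L^2}\bigr)$, and this uses $w\in\dot Y^{1/4+\sigma}_1$, which comes from \eqref{a priori-estimate-3}. Consequently
\[
\|(I-P_m)v(1)\|\le e^{-a}\|v_0\|+C(R)m^{-s}\bigl(\|v_0\|+\|\xi\|\bigr)+\bigl\|(I-P_m)\textstyle\int S_a\chi\mathcal P_N\xi\bigr\|.
\]
The last term is also $O(m^{-s'})\|\xi\|$, because $\chi\mathcal P_N\xi\in L^2\dot H^{1/2-}$.

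\emph{Low frequencies.} I want to choose $\xi$ with $P_mv(1)=0$ and $\|\xi\|\le C(m,R)\|v_0\|$. By HUM this amounts to an observability inequality for the adjoint system
\[
-\partial_t\varphi-\partial_x^3\varphi+a\varphi-w\partial_x\varphi=0,\qquad \varphi(1)\in\dot H_m,
\]
namely
\[
\|\varphi(1)\|\le C\|\mathcal P_N(\chi\varphi)\|_{L^2}.
\]
I prove this by compactness–uniqueness. First I show observability with the full localized observation $\|\chi\varphi\|$, using propagation of regularity/compactness in Bourgain spaces as in \cite{LRZ-10}; the bilinear estimates are needed to make sense of $w\partial_x\varphi$. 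Uniform constants over $w$ in the bounded set come from compactness of that set in a weaker topology, via Lemma~\ref{Lemma-Bourgain}(1). Unique continuation (Carleman) rules out nonzero solutions vanishing on $(t_1,t_2)\times(s_1,s_2)$. Since $\varphi(1)$ ranges over the finite-dimensional $\dot H_m$, the map $\varphi(1)\mapsto\chi\varphi$ has finite-dimensional, hence compact, range. Therefore for $N$ large $\mathcal P_N$ is nearly isometric on it, uniformly in $w$, which yields the truncated inequality.

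\emph{Order of parameters.} The order matters, since $C(m,R)$ grows with $m$: fix $m$ so that the high-frequency terms are small, then $N$ from observability, and the $m^{-s}C(m,R)$ tension is what I must check. I expect to resolve it by placing the smoothing gain on the Duhamel term $\int S_a\chi\mathcal P_N\xi$. An alternative is to first choose $\xi$ on the low modes and then estimate high modes directly from $v$, with a constant independent of $m$ for the $\xi$-contribution after applying $P_m$ projection arguments. A further alternative is to iterate: control to $P_mv(1)=0$ with the high part estimated as $e^{-a}\|v_0\|+C(R)m^{-s}\|v\|_{Y^0}$. I plan to follow \cite{CXZZ-25} in this.

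\textbf{Nonlinear step.} For the nonlinear problem with $\xi=\Phi v_0$, the difference $u-w-v$ solves a linearized equation with source $\tfrac12\partial_x((u-w)^2)$. By Lemma~\ref{lem_Bougainproperty}(3) and local Lipschitz dependence, this is $O(\|v_0\|^2)$ in $\dot Y^0_1$. For $d$ small this gives \eqref{Squeezing-0} with $q$. Lipschitz and $C^1$ dependence of $\Phi$ on $(w_0,h)$ follow from the explicit HUM formula $\xi=\mathcal P_N\chi\varphi$, whose Gramian is uniformly invertible and depends $C^1$ on $w$ by Proposition~\ref{Prop-uncontrolled}.

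\textbf{Main obstacle.} The uniform observability with non-smooth $\chi$ and rough $w\in H^{1/4+\sigma}$ is the hardest step, in particular the unique continuation.
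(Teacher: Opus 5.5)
Your overall architecture matches the paper's: linear stabilization by splitting into low-frequency control and high-frequency dissipation, followed by the quadratic estimate for $z=u-w-v$. Your normal-form bound for $v(t)-S_a(t)v_0-\int_0^tS_a(t-r)\chi\mathcal P_N\xi\,dr$ in terms of $\|v_0\|_{\dot L^2}+\|\xi\|_{L^2}$ is also correct. The gap is the remaining Duhamel term, which is exactly the ``tension'' you leave open.

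Your claim that $\|(I-P_m)\int_0^1S_a(1-r)\chi\mathcal P_N\xi\,dr\|_{\dot L^2}=O(m^{-s'})\|\xi\|_{L^2}$ fails uniformly in $N$. For $0<s'<1/2$ one has $\|\mathbf{1}_{(s_1,s_2)}\beta_k\|_{H^{s'}(\T)}\sim k^{s'}$, so only $\|\chi\mathcal P_N\xi\|_{L^2H^{s'}}\lesssim N^{s'}\|\xi\|_{L^2}$ holds. The Duhamel integral of an $L^2_tL^2_x$ source gains no regularity by itself: $f(r)=S_a(r)g$ gives $S_a(1)g$. Since $N=N(m)$ comes from a compactness argument with no quantitative relation to $m$, an $L^2$ control leaves the high modes of $v(1)$ merely bounded by $C\|\xi\|$, not small. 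No ordering of $m,N$ then closes the estimate. Believing the control cost is $C(m,R)$ makes this worse, and none of your three proposed fixes touches the issue.

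Two ingredients are missing.

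(i) The control cost is independent of $m$. The full observability constant in Lemma~\ref{Lemma-fullobs} does not involve $m$, and truncating to $\mathcal P_N$ costs only a fixed factor once $N=N(m)$ is large (Lemma~\ref{Lemma-truncatedobs}). The full inequality must be stated for $G\varphi$, not $\chi\varphi$. $\mathcal P_N$ annihilates the spatial mean over $(s_1,s_2)$ for every $N$, so it is never nearly isometric on $\chi\varphi$, and the unique continuation step must handle $\varphi=c(t)$ on the observation region.

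(ii) You need $\xi\in L^2(t_1,t_2;\dot H^\sigma(s_1,s_2))$ with bound $C(\sigma,R)\|v_0\|_{\dot L^2}$ independent of $m$ and $N$, even though $v_0$ is only in $\dot L^2$. The paper gets this in Lemma~\ref{lem-LFtoS(t)v0} by steering $P_mv(1)$ to $P_mS_a(1)v_0$ instead of to $0$. Write $v=v^1+v^2+v^3$, where:
\begin{enumerate}
\item[\textbullet] $v^1$ is the uncontrolled linearized flow from $v_0$, so $v^1(1)-S_a(1)v_0\in\dot H^\sigma$ by Proposition~\ref{Prop-linearized_nonlinearsmooothing};
\item[\textbullet] $v^2$ is the backward linearized solution with $v^2(1)=-[v^1(1)-S_a(1)v_0]$, so $v^2(0)\in\dot H^\sigma$ with norm $\lesssim\|v_0\|_{\dot L^2}$;
\item[\textbullet] $v^3$ is controlled from $-v^2(0)$ to $P_mv^3(1)=0$ by HUM in the $\dot H^\sigma$--$\dot H^{-\sigma}$ duality, i.e.\ Proposition~\ref{Prop-LFcontrol} with $s=\sigma$.
\end{enumerate}
Because the $\beta_k$ are Neumann eigenfunctions, $\|\chi\mathcal P_N\xi\|_{L^2\dot H^\sigma}\lesssim\|\xi\|_{L^2\dot H^\sigma(s_1,s_2)}$ uniformly in $N$. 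Corollary~\ref{Coro-HFdissipation} with $f=\chi\mathcal P_N\xi$ then gives $\|v(1)\|_{\dot L^2}\le(e^{-a}+C(\sigma,R)m^{-\sigma})\|v_0\|_{\dot L^2}$. This lets you fix $m$ first and choose $N$ afterwards.
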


We emphasize the ``extra regularity'' of the reference trajectory $w\in \dot{Y}^{1/4+\sigma}_1$ is crucial to the proof, while both the closeness between initial data $\|u_0-w_0\|_{\dot{L}^2}\le d$ and the stabilization property \eqref{Squeezing-0} are characterized in $\dot{L}^2$.

Theorem~\ref{Theorem-control} follows from the corresponding stabilization result for linearized system via a perturbative argument (cf.~\cite{Shi-15,LWX-24,CXZZ-25}); see Section~\ref{Section-proofcontrol}. Indeed, the linearized system along $w$ is
\begin{equation}\label{Control-problem}
\begin{cases}
\partial_t v+\partial^3_xv+av+\partial_x(wv)=\chi\mathcal{P}_N\xi,\quad t\in[0,1],\\
v(0,\cdot)=v_0.
\end{cases}
\end{equation}
In view of Proposition~\ref{Prop-linear-1}, for $w\in \dot{Y}^{1/4+\sigma}_1$ and $v_0\in \dot{H}^s\, (s\in [0,1/4+\sigma))$, this linearized system admits a unique solution $v\in \dot{Y}^s_1$. This solution $v$ is denoted by 
\[v=\mathcal V_{w}(v_0,\chi\mathcal{P}_N\xi).\] 
The proof consists of three parts:
\begin{itemize}
\item[\tiny$\bullet$] In Section~\ref{Section-LF}, we study the low-frequency controllability based on observability and HUM. The method is analogous to \cite{RZ-96,RZ-06,LRZ-10}, while the techniques differ since we are dealing with linearized KdV in $L^2$. Specifically, the observability inequality (Lemma~\ref{Lemma-fullobs}) and unique continuation property (Proposition~\ref{Prop-UCP}) are novel.

\item[\tiny$\bullet$] In Section~\ref{Section-HF}, we adapt the idea from \cite{CXZZ-25} to derive high-frequency dissipation via nonlinear smoothing (Proposition~\ref{Prop-linearized_nonlinearsmooothing}). To this end, we extend normal form transformation to linearized KdV, and exploit a bilinear estimate (Lemma~\ref{Lemma-smoothing-1}) to gain regularity.

\item[\tiny$\bullet$] In  Section~\ref{Section-proofcontrol}, we combine information on low and high frequencies to conclude a 
stabilization property for linearized system \eqref{Control-problem}, and then establish Theorem~\ref{Theorem-control} with a perturbative argument.
\end{itemize} 

\settocdepth{subsection}

\subsection{Low-frequency controllability via observability inequality}\label{Section-LF}

This subsection is concerned with the low-frequency controllability for \eqref{Control-problem}, in a slightly more general space $\dot{H}^s$ with $s\in [0,1/4+\sigma)$. In what follows, $P_m\colon \dot{H}^s(\T)\to \dot{H}_m$ is the orthogonal projection to low Fourier modes, where $\dot{H}_m$ is defined by \eqref{def_Hm} (we inform the reader to distinguish $\mathcal{P}_N$ and $P_m$).

\begin{proposition}[Low-frequency controllability]\label{Prop-LFcontrol}
Given $\sigma\in (0,1/4)$, $s\in [0,1/4+\sigma)$, $R>0$ and $m\in \N^+$, there exists a constant $N\in \N^+$, such that the following assertions hold.
\begin{enumerate}
\item[$(1)$] If $\|w\|_{\dot{Y}^{1/4+\sigma}_1}<R$, then for any $v_0\in \dot{H}^s(\T)$, there exists a control $\xi$ such that
\begin{equation*}
\int_{t_1}^{t_2} \|\xi(t,\cdot)\|_{\dot{H}^s(s_1,s_2)}^2 dt\le C(\sigma,s,R)\|v_0\|_{\dot{H}^s}^2,
\end{equation*}
and the solution $v\in \dot{Y}^s_1$ of linearized system \eqref{Control-problem} satisfies
\begin{equation*}
P_m v(1)=0.
\end{equation*}

\item[$(2)$] The control $\xi$ in {\rm (1)} can be represented as
\[\xi=\Lambda(w)(v_0),\]
where the map (determined by $\sigma,s,R,m$)
\[\Lambda\colon B_{\dot{Y}^{1/4+\sigma}_1}(R)\to \mathcal{L}(\dot{H}^s;L^2(t_1,t_2;\dot{H}^s(s_1,s_2)))\]
is Lipschitz and continuously differentiable.
\end{enumerate}
\end{proposition}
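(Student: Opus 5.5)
Our approach is the Hilbert uniqueness method (HUM) in a finite-dimensional target space. The target is only $P_m v(1)=0$ with $\dot H_m$ finite-dimensional, so by linearity it suffices to control the map $v_0\mapsto P_m\mathcal V_w(v_0,0)(1)$.

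\textbf{Step 1: dual problem.} For $\varphi_1\in\dot H_m$, let $\varphi$ solve the adjoint system backward from $t=1$:
\[
-\partial_t\varphi-\partial_x^3\varphi+a\varphi-w\partial_x\varphi=0,\qquad \varphi(1)=\varphi_1 .
\]
Its well-posedness in $\dot Y^{-s}_1$ (indeed in $\dot Y^{0}_1$, since $\varphi_1$ is smooth) uses the bilinear estimates for $w\partial_x\varphi$ with $w\in \dot Y^{1/4+\sigma}_1$. The duality identity reads
\[
\langle P_m v(1),\varphi_1\rangle=\langle v_0,\varphi(0)\rangle+\int_0^1\!\!\int_\T \chi\,\mathcal P_N\xi\,\varphi .
\]

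\textbf{Step 2: observability on $\dot H_m$.} The key estimate, uniform for $\|w\|_{\dot Y^{1/4+\sigma}_1}<R$, is
\[
\|\varphi_1\|^2\le C\,\|\mathcal P_N(\chi\varphi)\|^2_{L^2(t_1,t_2;\dot L^2(s_1,s_2))}.
\]
Since $\dot H_m$ is finite-dimensional, all norms on it are equivalent. It therefore suffices to prove injectivity plus compactness.

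We argue by contradiction. Suppose there are $w_n$ bounded and $\varphi_{1,n}$ with $\|\varphi_{1,n}\|=1$ while the right-hand side tends to $0$. Pass to subsequences: $\varphi_{1,n}\to\varphi_1$ in $\dot H_m$, and $w_n\rightharpoonup w$ weakly in $Y^{1/4+\sigma}_1$, hence strongly in $X^{1/4+\sigma-,1/2-}_1$ by Lemma~\ref{Lemma-Bourgain}(1). Stability of the adjoint solution map under this convergence gives $\varphi_n\to\varphi$ in $L^2((0,1)\times\T)$.

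This does not yet give $\mathcal P_N(\chi\varphi)=0$ for fixed $N$. So we first take $N$ free. For fixed $N$, the limit satisfies $\mathcal P_N(\chi\varphi)=0$. We then want $N$ large to force $\chi\varphi\equiv 0$ on $(t_1,t_2)\times(s_1,s_2)$ up to constants-in-$x$. The finite projection is handled by a quantitative argument: the map $\varphi_1\mapsto\chi\varphi$ is compact into $L^2$, uniformly in $w$ in the bounded set, because $\varphi$ has extra regularity. Hence for $N$ large the tail $(1-\mathcal P_N)(\chi\varphi)$ is uniformly small relative to $\|\varphi_1\|$. This reduces the problem to observability with full localized observation, $\|\varphi_1\|\le C\|\chi\varphi\|_{L^2}$ modulo the mean over $(s_1,s_2)$.

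\textbf{Step 3: full observation via unique continuation.} The full observation inequality is again proved by compactness–uniqueness. In the limit, $\varphi$ is constant in $x$ on the observation region, i.e.\ $\varphi=c(t)$ on $(t_1,t_2)\times(s_1,s_2)$. Differentiating the equation shows $c$ satisfies $-c'+ac-w\cdot 0=0$ there, so $\psi=\varphi-c$ solves a similar equation and vanishes on an open set. A unique continuation property for $-\partial_t\psi-\partial_x^3\psi-w\partial_x\psi+a\psi=\text{(function of }t)$ with rough $w$ then forces $\varphi$ to be a function of $t$ alone on $\T$. Since $\varphi$ has mean zero, $\varphi\equiv0$, contradicting $\|\varphi_1\|=1$. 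This unique continuation needs a Carleman estimate, together with propagation of regularity to make $\varphi$ smooth enough.

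\textbf{Main obstacle.} I expect the main obstacle to be exactly this unique continuation step with $w$ only in $H^{1/4+\sigma}$, combined with the non-smooth cutoff $\chi$. I would first apply propagation of regularity (as in \cite{LRZ-10}), using that $\varphi$ is smooth on the observation region, to upgrade $\varphi$ to $H^1$ or better globally; the Carleman estimate then applies.

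\textbf{Step 4: HUM and regularity of $\Lambda$.} With observability in hand, define the Gramian
\[
G_w=\mathcal O_w^*\mathcal O_w \ \text{on}\ \dot H_m,\qquad \mathcal O_w\varphi_1=\mathcal P_N(\chi\varphi).
\]
It is invertible with $\|G_w^{-1}\|\le C(R)$. Set
\[
\xi=-\mathcal O_w G_w^{-1}P_m\mathcal V_w(v_0,0)(1).
\]
By the duality identity this gives $P_mv(1)=0$. The control $\xi$ lies in a fixed finite-dimensional space of smooth-on-$(s_1,s_2)$ functions, so all $\dot H^s(s_1,s_2)$ norms are comparable and the cost bound follows.

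Lipschitz and $C^1$ dependence of $\Lambda(w)$ on $w$ follow from the same properties of $w\mapsto \mathcal V_w$ (Proposition~\ref{Prop-linear-1}) and of the adjoint solution map, together with smoothness of matrix inversion.
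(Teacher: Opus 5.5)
Your architecture (duality identity, observability, compactness--uniqueness with a Carleman-based unique continuation, HUM) is the paper's. As written, however, it does not prove the statement. The cost bound must hold with $C(\sigma,s,R)$ \emph{independent of $m$}, hence of $N=N(m)$. The paper stresses this, and it is what makes the scheme work later: Lemma~\ref{lem-LFtoS(t)v0} uses $s=\sigma$, and the proof of Theorem~\ref{Theorem-control} fixes this constant first and only then takes $m$ large so that $C(\sigma,R)m^{-\sigma}+e^{-a}\le q$.

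Your Step~4 gets the $\dot H^s(s_1,s_2)$ bound from an $L^2$ Gramian by noting that $\xi$ lies in the span of $\beta_k$, $k\le N$. That comparison costs a factor $\lambda_N^{s/2}\sim N^{s}$. Likewise, the contradiction on the compact unit sphere of $\dot H_m$ in Step~2 (``all norms on $\dot H_m$ are equivalent'') yields an observability constant depending on $m$ and $N$.

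What is missing is to run HUM in the $\dot H^{-s}$/$\dot H^{s}$ duality:
first prove full observability $\int_{t_1}^{t_2}\|G\varphi\|^2_{\dot H^{-s}(s_1,s_2)}\,dt\ge C_1\|\varphi_1\|^2_{\dot H^{-s}}$ for \emph{all} $\varphi_1\in\dot H^{-s}$;
then deduce the truncated version with $C_0=C_1/(2C)$, using finite-dimensionality of $\dot H_m$ only to choose $N(m)$ so that the tail is at most half;
next minimize $J$ with the $\dot H^{-s}(s_1,s_2)$ observation norm;
and finally take $\breve\xi=(-\Delta_{loc})^{-s}\mathcal P_NG\breve\varphi$.
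Because $\mathcal P_N$ commutes with $(-\Delta_{loc})^{-s}$, one has $\|\breve\xi\|_{\dot H^s(s_1,s_2)}=\|\mathcal P_NG\breve\varphi\|_{\dot H^{-s}(s_1,s_2)}$, and the Euler--Lagrange identity gives the cost bound with no loss in $N$. For $s=0$ your Gramian is fine, provided the $L^2$ observability comes from full observability plus truncation.

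Your sketch of full observability also has two holes.

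First, there $\varphi^n_1$ converges only weakly in $\dot H^{-s}$ (and $w^n$ varies), so proving that the limit vanishes contradicts nothing. You need strong convergence $\varphi^n(t_0)\to\varphi(t_0)$ in $H^{-s}$ at some interior time. The paper gets this from propagation of compactness (Proposition~\ref{Prop-propagation-1}) applied to $D^{-s}(\varphi^n-\varphi)$ after a commutator estimate. It identifies the limit of $w^n\partial_x\varphi^n$ through the regularity-trading estimates of Corollary~\ref{Coro-bilinear}.

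Second, your remedy for unique continuation fails. With the bilinear estimates available for $w\in Y^{1/4+\sigma}$ (Lemma~\ref{Lemma-bilinear} only allows regularity index $r<1/4+\sigma$), propagation of regularity stalls at $\tilde\varphi\in L^2_{loc}H^{3/4+\sigma-}$. This is below $H^1$, since $\sigma<1/4$. Even $H^1$ would not suffice: the Carleman argument of \cite{RZ-06} relies on $w\in L^\infty$, whereas here one needs $w\partial_xu\in L^2_{t,x}$, i.e.\ $u\in L^2H^{5/4}$.

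The new idea in Proposition~\ref{Prop-UCP} is a two-derivative gain from vanishing on $\omega$. Put $\omega$ at the endpoints of $[0,2\pi]$ and multiply by $xv$; this gives $\|\partial_xv\|_{L^2L^2}\lesssim\|f\|_{L^2H^{-1}}+\|v\|_{L^2L^2}$. Difference quotients then upgrade this to $u\in L^2_{loc}H^{3/2}$ from $f=w\partial_xu\in L^2H^{-1/2}$. Time averaging next gives $u^h\in L^2H^3$. Finally, the potential term in the Carleman estimate is absorbed using $\|w\|_{L^2}\|\vartheta\,\partial_xu^h\|_{L^\infty}$ and interpolation.
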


We point out $w$ need not be a solution of \eqref{Uncontrolled-Problem} (only its $\dot{Y}^{1/4+\sigma}_1$ norm is important to the proof), and the constant $C(\sigma,s,R)$ in \eqref{control-bound} does not depend on $m$.

It is well known in control theory that the controllability is equivalent to observability. In an abstract sense, it means that a bounded linear operator $T\colon X\to Y$ is surjective if and only if $\|T^* y^*\|\ge C\|y^*\|$ for any $y^*\in Y^*$. This suggests investigating the dual of the map
\[L^2(s_1,s_2;\dot{H^s}(s_1,s_2))\ni \xi\mapsto P_m v(1)\in \dot{H}_m.\]
To this end, we introduce the backward system (here $\widehat{f}$ stands for spatial Fourier modes of $f$)
\begin{equation}\label{Adjoint-problem}
\begin{cases}
\partial_t \varphi+\partial^3_x\varphi-a\varphi+[w\partial_x\varphi-\widehat{w\partial_x\varphi}(0)]=0,\quad t\in[0,1],\\
\varphi(1,\cdot)=\varphi_1.
\end{cases}
\end{equation}
In view of Proposition~\ref{Prop-linear-2}, it admits a unique solution $\varphi\in \dot{Y}^{-s}_1$ for every $\varphi_1\in \dot{H}^{-s}$, as long as $s\in [0,1/4+\sigma)$ and $w\in \dot{Y}^{1/4+\sigma}_1$. 
The solution is denoted by 
\[\varphi=\mathcal U_w(\varphi_1).\]

System \eqref{Adjoint-problem} is regarded as the adjoint of \eqref{Control-problem} for the following reason. If $v=\mathcal V_w(v_0,\chi \mathcal{P}_N \xi)$ with $\xi\in L^2(t_1,t_2;\dot{L}^2(s_1,s_2))$, and $\varphi=\mathcal U_w(\varphi_1)$ with $\varphi_1\in \dot{L}^2$, then
\begin{equation}\label{Dual-identity}
\begin{aligned}
(v(1),\varphi_1)_{\dot{L}^2}-(v_0,\varphi(0))_{\dot{L}^2}&=\int_0^1\frac{d}{dt}(v(t),\varphi(t))_{\dot{L}^2} dt\\
& =\int_0^1(-\partial^3_xv-av-\partial_x(wv)+\chi\mathcal{P}_N\xi,\varphi)_{\dot{L}^2} dt\\
&\quad +\int_0^1\left(v,-\partial^3_x\varphi+a\varphi-\left[w\partial_x\varphi-\widehat{w\partial_x\varphi}(0)\right]\right)_{\dot{L}^2}dt\\
& =\int_0^1 (\mathcal{P}_N\xi, \chi \varphi)_{\dot{L}^2} dt=\int_0^1 (\xi,\mathcal{P}_N G\varphi)_{\dot{L}^2(s_1,s_2)} dt,
\end{aligned}
\end{equation}
where the operator $G$ is given by
\begin{equation}\label{G-definition}
    G\colon L^2(0,1;\dot{H}^{-s})\to L^2(t_1,t_2;\dot{H}^{-s}(s_1,s_2)),\quad G\psi:=\chi\left(\psi-\tfrac{2\pi}{|s_1-s_2|}\widehat{\mathbf{1}_{(s_1,s_2)}\psi}(0)\right).
\end{equation}

By utilizing the dual identity \eqref{Dual-identity} and HUM (see Appendix~\ref{Section-HUM}), it is not hard to reduce Proposition~\ref{Prop-LFcontrol} to the following observability inequality.

\begin{lemma}[Truncated observability]\label{Lemma-truncatedobs}
Given $\sigma\in(0,1/4)$, $s\in[0,1/4+\sigma)$ and $R>0$, there exists a constant $C_0>0$ such that for every $m\in\N^+$, one can find $N\in\N^+$ such that if $\|w\|_{\dot{Y}^{1/4+\sigma}_1}<R$, $\varphi_1\in \dot{H}_m$ and $\varphi=\mathcal{U}_w(\varphi_1)$, then
\begin{equation}\label{Truncated-obs}
\int_{t_1}^{t_2}\|\mathcal{P}_NG\varphi(t)\|_{\dot{H}^{-s}(s_1,s_2)}^2dt\geq C_0\|\varphi_1\|^2_{\dot{H}^{-s}}.
\end{equation}
\end{lemma}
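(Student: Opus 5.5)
The plan is to deduce the truncated estimate from a full (untruncated) observability inequality plus a compactness/approximation argument exploiting that $\varphi_1$ ranges over the finite-dimensional space $\dot{H}_m$. Concretely, I would first establish the full observability inequality (the paper's Lemma~\ref{Lemma-fullobs}): there is $C_1=C_1(\sigma,s,R)>0$ such that for all $w$ with $\|w\|_{\dot{Y}^{1/4+\sigma}_1}<R$ and all $\varphi_1\in\dot{H}^{-s}$,
\[\int_{t_1}^{t_2}\|G\varphi(t)\|_{\dot{H}^{-s}(s_1,s_2)}^2dt\ge C_1\|\varphi_1\|_{\dot{H}^{-s}}^2 .\]
Then I set $C_0=C_1/4$, which is independent of $m$. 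The point is that only $N$ depends on $m$, while $C_0$ does not.

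For the full inequality I would argue by contradiction and compactness, in the spirit of \cite{RZ-96,LRZ-10}. Suppose there are sequences $w_n$ in the $R$-ball and $\varphi_{1,n}$ with $\|\varphi_{1,n}\|_{\dot{H}^{-s}}=1$ such that $\|G\varphi_n\|_{L^2(t_1,t_2;\dot{H}^{-s}(s_1,s_2))}\to0$. Using the well-posedness bounds of Proposition~\ref{Prop-linear-2} and the compact embedding of Lemma~\ref{Lemma-Bourgain}(1), I pass to weak limits $w_n\rightharpoonup w$ and $\varphi_n\rightharpoonup\varphi$. The bilinear estimates (Lemma~\ref{Lemma-bilinear}, Corollary~\ref{Coro-bilinear}) let me pass to the limit in $w_n\partial_x\varphi_n$, so $\varphi$ solves \eqref{Adjoint-problem} with $G\varphi=0$. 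This means $\varphi$ equals a function of $t$ only on $(t_1,t_2)\times(s_1,s_2)$.

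The unique continuation property (Proposition~\ref{Prop-UCP}) then forces $\varphi\equiv0$. To rule out loss of mass in the weak limit, i.e.\ to upgrade to strong convergence $\varphi_{1,n}\to0$ and reach a contradiction, I would use a propagation-of-compactness or smoothing argument, following \cite{LRZ-10}'s propagation results. Roughly, the localized observation of high frequencies propagates along the KdV group to the whole torus, because the group velocity $3k^2$ is infinite and every point is hit within the window $(t_1,t_2)$. This yields a weak observability estimate of the form $\|\varphi_1\|_{\dot H^{-s}}\lesssim\|G\varphi\|+\|\varphi_1\|_{\dot H^{-s-1}}$, which together with uniqueness closes the contradiction. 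I expect this step to be the main obstacle. The cut-off $\chi$ is non-smooth, so commutator/multiplier arguments must be done with care (for instance by inserting a smooth cut-off supported inside $(s_1,s_2)$ and bounding from below by it). The low regularity of $w$ also forces the use of the new bilinear estimates for the potential term.

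Given the full inequality, the truncation step is soft. Write
\[\|\mathcal{P}_NG\varphi\|\ge\|G\varphi\|-\|(I-\mathcal{P}_N)G\varphi\|.\]
The set
\[K_m=\{G\,\mathcal U_w(\varphi_1):\|w\|_{\dot{Y}^{1/4+\sigma}_1}\le R,\ \varphi_1\in\dot{H}_m,\ \|\varphi_1\|_{\dot H^{-s}}=1\}\]
is precompact in $L^2(t_1,t_2;\dot{H}^{-s}(s_1,s_2))$. Indeed, for $\varphi_1\in\dot{H}_m$ the data lie in a bounded set of $\dot{H}^{1/4+\sigma-}$ (all norms are equivalent on $\dot{H}_m$, with constants depending on $m$), so $\varphi$ is bounded in $\dot{Y}^{r}_1$ with $r>-s$. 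Multiplication by $\chi$ preserves $\dot H^{r'}(s_1,s_2)$ for $|r'|<1/2$, and there is some time regularity from the equation, which gives compactness by an Aubin–Lions type argument in $X^{r,b}$ (Lemma~\ref{Lemma-Bourgain}(1)). Since $\mathcal{P}_N\to I$ strongly and the operators are uniformly bounded, the convergence is uniform on compact sets. So I can choose $N=N(m)$ with $\sup_{K_m}\|(I-\mathcal{P}_N)\psi\|\le\sqrt{C_1}/2$. This yields \eqref{Truncated-obs} with $C_0=C_1/4$, by homogeneity.
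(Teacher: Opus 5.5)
Your proposal is correct and follows essentially the paper's route. The paper likewise deduces \eqref{Truncated-obs} from the full observability inequality of Lemma~\ref{Lemma-fullobs}, which it proves by the same contradiction--compactness scheme you sketch; there, before Proposition~\ref{Prop-UCP} can be applied, one must subtract the time-dependent constant $c(t)$ and bootstrap $\varphi-c$ to $L^2_{loc}(\tilde t_1,\tilde t_2;H^{3/4+\sigma-})$ via propagation of regularity. It then uses, as you do, that for $\varphi_1\in\dot H_m$ the solutions are bounded (with $m$-dependent constants) in a Bourgain space mapped compactly into $L^2(t_1,t_2;\dot H^{-s}(s_1,s_2))$, so $(Id-\mathcal P_N)G\varphi$ is uniformly small once $N=N(m)$ is large, while $C_0$ depends only on $C_1$. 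The only difference is packaging: the paper writes an operator-norm bound $\delta_N\to0$ for $(Id-\mathcal P_N)\chi$ on $X^{r,b}_1$ and absorbs, whereas your precompactness-plus-triangle-inequality version (compact embedding $X^{r,1/2}_1\hookrightarrow L^2(0,1;H^{-s})$, $r>-s$, followed by the bounded map $G$) is equivalent and avoids checking that $\widetilde G$ is bounded on $X^{r,b}_1$.
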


This truncated inequality follows from its ``full version'' (i.e.~$m=N=\infty$).

\begin{lemma}[Full observability]\label{Lemma-fullobs}
Given $R>0$, $\sigma\in (0,1/4)$ and $s\in[0,1/4+\sigma)$, there exists a constant $C_1>0$ such that if $\|w\|_{\dot{Y}^{1/4+\sigma}_1}<R$, $\varphi_1\in \dot{H}^{-s}$ and $\varphi=\mathcal{U}_w(\varphi_1)$, then
\begin{equation}\label{Full-obs}
\int_{t_1}^{t_2}\|G\varphi(t)\|_{\dot{H}^{-s}(s_1,s_2)}^2dt\geq C_1\|\varphi_1\|^2_{\dot{H}^{-s}}.
\end{equation}
\end{lemma}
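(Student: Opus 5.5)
We argue by the classical compactness–uniqueness scheme, as in \cite{RZ-96,LRZ-10}, combined with propagation of singularities and a unique continuation property (Proposition~\ref{Prop-UCP}). We suppose \eqref{Full-obs} fails. Then there are sequences $w^n$ with $\|w^n\|_{\dot{Y}^{1/4+\sigma}_1}<R$ and $\varphi_1^n\in \dot{H}^{-s}$ such that $\|\varphi_1^n\|_{\dot{H}^{-s}}=1$ and
\[\int_{t_1}^{t_2}\|G\varphi^n\|_{\dot{H}^{-s}(s_1,s_2)}^2dt\to 0,\qquad \varphi^n=\mathcal U_{w^n}(\varphi_1^n).\]
By Proposition~\ref{Prop-linear-2}, $\varphi^n$ is bounded in $\dot{Y}^{-s}_1\subset \dot{X}^{-s,1/2}_1$. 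Passing to subsequences, $w^n\rightharpoonup w$ weakly in $\dot{X}^{1/4+\sigma,1/2}_1$ and $\varphi^n\rightharpoonup\varphi$ weakly in $\dot{X}^{-s,1/2}_1$. By Lemma~\ref{Lemma-Bourgain}(1) these convergences are strong in slightly weaker norms. Using the bilinear estimates in Bourgain spaces (Lemma~\ref{Lemma-bilinear}, Corollary~\ref{Coro-bilinear}), we pass to the limit in $w^n\partial_x\varphi^n$ in $X^{-s-1,-1/2}$ and conclude that $\varphi$ solves \eqref{Adjoint-problem} with potential $w$ and satisfies $G\varphi=0$ on $(t_1,t_2)\times(s_1,s_2)$. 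In other words, $\varphi$ equals a function $c(t)$ of time alone on this rectangle.

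The first main step is to upgrade weak to strong convergence, i.e.\ to show $\varphi^n\to\varphi$ in $L^2(0,1;\dot{H}^{-s})$, so that $\varphi_1\neq 0$. Since $G\varphi^n\to 0$ strongly on the rectangle and $G\varphi^n\rightharpoonup G\varphi$, the differences $\psi^n=\varphi^n-\varphi$ converge to zero strongly in $L^2$ on $(t_1,t_2)\times(s_1,s_2)$, up to a function of $t$. Here we will use the propagation of compactness/regularity for the linear KdV operator $\partial_t+\partial_x^3$, in the spirit of \cite{LRZ-10} (commutator arguments with pseudodifferential multipliers $B=\phi(x)D^{-2s-2}$-type in $X^{r,b}$). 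The potential term $w^n\partial_x\psi^n$ is treated as a perturbation; it is compact thanks to the extra regularity $1/4+\sigma>s$ of $w^n$, via Corollary~\ref{Coro-bilinear} and Lemma~\ref{Lemma-Bourgain}(2)(4). Because KdV has infinite propagation speed and every horizontal line $\{t\}\times\T$ meets the strip, the strong $L^2_{\rm loc}$ convergence on the strip propagates to all of $(t_1,t_2)\times\T$. Combining this with the energy-type bound from Proposition~\ref{Prop-linear-2} yields $\varphi^n_1\to\varphi_1$ in $\dot{H}^{-s}$, hence $\|\varphi_1\|_{\dot{H}^{-s}}=1$.

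The second step handles the degree of freedom $c(t)$. Differentiating in $x$ on the rectangle, $\partial_x\varphi=0$ there. The equation then forces $\partial_t c-ac=\widehat{w\partial_x\varphi}(0)$, which is a function of $t$ only. We next reduce to the situation $\varphi\equiv c(t)$ locally. We then apply Proposition~\ref{Prop-UCP} to $\partial_x\varphi$ (or to $\varphi-c$), which satisfies a linear KdV-type equation with rough potential and vanishes on an open set, and conclude $\partial_x\varphi\equiv 0$ on $(t_1,t_2)\times\T$. Since $\varphi$ has mean zero, $\varphi\equiv0$ on $(t_1,t_2)$, and backward uniqueness/well-posedness gives $\varphi_1=0$. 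This contradicts the first step.

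The main obstacle is the first step. The reason is that $\chi$ is a sharp cutoff, so the standard smooth-multiplier propagation argument has to be run with cutoffs strictly inside $(s_1,s_2)$, and only modulo the time-dependent constant subtracted in $G$. In addition, at regularity $\dot{H}^{-s}$ with a potential in only $\dot{Y}^{1/4+\sigma}$, the commutator terms involving $w^n$ must be shown compact using the new bilinear estimates. I would first try to make the commutator estimate in $X^{-s-1+\varepsilon,-1/2+\varepsilon}$ absorb the potential term, with the gain $\sigma$ supplying compactness.
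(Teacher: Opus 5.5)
There is a genuine gap at the unique continuation step. Your overall scheme matches the paper's: contradiction, extraction, identifying the limit via Corollary~\ref{Coro-bilinear}, propagation of compactness, $\varphi=c(t)$ on the rectangle, subtracting $c$, then unique continuation. But Proposition~\ref{Prop-UCP} requires $u\in L^2(0,T;H^{3/4+\delta})$. Your $\tilde\varphi=\varphi-c(t)$ is a priori only in $X^{-s,1/2}\subset L^2H^{-s}$, with $s$ possibly close to $1/4+\sigma$. You are therefore roughly one derivative short, and the proposal never addresses this.

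The missing idea is a regularity bootstrap that uses the vanishing of $\tilde\varphi$ on $(s_1,s_2)$:
\begin{enumerate}
\item First check that $c\in H^{1/2}_{loc}$ in time, by testing $\varphi$ against $g\in C_0^\infty(s_1,s_2)$. Then $\tilde\varphi\in X^{r,1/2}$ locally with $r=-s$. Also $w\partial_x\tilde\varphi=w\partial_x\varphi$ lies in $X^{r,-1/2}$ locally by Lemma~\ref{Lemma-bilinear}, its spatial mean being $c'-ac$.
\item Proposition~\ref{Prop-propagation-2} with $b=\rho=1/2$ gives $\tilde\varphi\in L^2_{loc}H^{r+1/2}$.
\item Pick a time slice where $\varphi\in\dot H^{r+1/2}$. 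Uniqueness for the adjoint problem (Proposition~\ref{Prop-linear-2}) then restores the $X^{r+1/2,1/2}$ structure, and you iterate.
\item Lemma~\ref{Lemma-bilinear} and Proposition~\ref{Prop-linear-2} need the regularity index strictly below $1/4+\sigma$, so the iteration stops at $X^{1/4+\sigma-,1/2}$. One last propagation step gives $L^2_{loc}H^{3/4+\sigma-}$, which just meets the hypothesis of Proposition~\ref{Prop-UCP} with $\delta<\sigma$.
\end{enumerate}
Your alternative of applying Proposition~\ref{Prop-UCP} to $\partial_x\varphi$ does not fit. It solves $\partial_t u+\partial_x^3u-au+\partial_x(wu)=0$, which carries the extra term $(\partial_x w)u$ with $\partial_x w\in H^{-3/4+\sigma}$, and it sits at regularity $H^{-s-1}$.

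Two smaller points. First, in the strong-convergence step you flag the time-dependent mean in $G$ but leave it unresolved. It is removed by noting that $\psi^n\to0$ strongly in $L^2(0,1;H^{-s-\varepsilon})$ and that $\mathbf{1}_{(s_1,s_2)}\in H^{s+\varepsilon}$ since $s+\varepsilon<1/2$; hence $\widehat{\mathbf{1}_{(s_1,s_2)}\psi^n}(0)\to0$ in $L^2_t$. After localizing with a smooth cutoff inside $(s_1,s_2)$ and conjugating by $D^{-s}$ (the commutator with the cutoff is smoothing), Proposition~\ref{Prop-propagation-1} applies at the $L^2$ level with $b=1/2$, $b'=0$. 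The potential terms enter only as a source converging strongly in $X^{-3\varepsilon,-1/2}$, so no new commutator estimate is needed.

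Second, your claim that $\varphi_1^n\to\varphi_1$ in $\dot H^{-s}$ follows from the uniform bound is not justified as stated. The functions $\varphi^n$ and $\varphi$ solve equations with different potentials, and $(w^n-w)\partial_x\varphi$ need not tend to zero in $Z^{-s}$ when $w^n$ converges only weakly in $X^{1/4+\sigma,1/2}$. You do not need this claim, though. Uniform well-posedness on $[t_0,1]$ gives $1=\|\varphi_1^n\|_{\dot H^{-s}}\le C\|\varphi^n(t_0)\|_{\dot H^{-s}}$. Strong convergence at some $t_0$ in the interval where $\varphi\equiv0$ then yields the contradiction directly.
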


Taking the full observability for granted, we can justify the truncated counterpart.

\begin{proof}[{\bf Proof of Lemma~\ref{Lemma-truncatedobs}}]
Define an auxiliary operator $\widetilde{G}$ by
\[\widetilde{G}\psi:=\psi-\tfrac{2\pi}{|s_1-s_2|}\widehat{\mathbf{1}_{(s_1,s_2)}\psi}(0).\]
In other words, $\widetilde{G}$ differs from $G$ by removing the localization $\chi$. 
Owing to $G\varphi=\chi \widetilde{G}\varphi$, we have
\begin{equation}\label{truncated-bound-1}
\begin{aligned}
&\int_{t_1}^{t_2}\|G\varphi(t)\|_{\dot{H}^{-s}(s_1,s_2)}^2dt\\
=&\int_{t_1}^{t_2}\|\mathcal{P}_NG\varphi(t)\|_{\dot{H}^{-s}(s_1,s_2)}^2dt+\int_{t_1}^{t_2}\|(Id-\mathcal{P}_N)\chi \widetilde{G}\varphi(t)\|_{\dot{H}^{-s}(s_1,s_2)}^2dt.
\end{aligned}
\end{equation}

To deal with the second integral, we have the following three observations. Let us fix two small parameters $r>0$ and $b\in (0,1/2)$ so that $r+2b<1/4$. 
\begin{itemize}
    \item[\tiny$\bullet$] The embedding $X_1^{r,b}\hookrightarrow X_1^{0,0}$ is compact (Lemma~\ref{Lemma-Bourgain}(1)), and hence the composition
    \[X^{r,b}_1\hookrightarrow X_1^{0,0}= L^2(0,1;L^2(\T))\overset{\chi}\to L^2(t_1,t_2;H^{-s}(s_1,s_2))\]
    is also compact. As a result, there exists a sequence $\delta_N\rightarrow 0^+$ such that
    \[\|(Id-\mathcal{P}_N)\chi g\|_{L^2(t_1,t_2;\dot{H}^{-s}(s_1,s_2))}^2\leq \delta_N\|g\|_{X^{r,b}_1}^2,\]
    for any $g\in X^{r,b}_1$ with mean zero on $(s_1,s_2)$.

    \item[\tiny$\bullet$] Clearly, $\widetilde{G}$ is a bounded linear operator from $X^{r,b}_1$ into itself.

    \item[\tiny$\bullet$] Since $\dot{H}_m$ is finite dimensional and consists of smooth functions, all Sobolev norms on $\dot{H}^m$ are equivalent. In particular, 
    \[\|\varphi_1\|_{\dot{H}^{1/4}}\le C(m)\|\varphi_1\|_{\dot{H}^{-s}},\quad \forall \varphi_1\in \dot{H}_m.\]
    Moreover, due to $b<1/2$ and $r+2b<1/4$, the solution $\varphi=\mathcal{U}_w(\varphi_1)\in \dot{Y}^{1/4}_1$ with $\varphi_1\in \dot{H}_m$ satisfies the a priori estimate
    \[\|\varphi\|_{X_1^{r+2b,b}}\le C\|\varphi\|_{X^{1/4,1/2}_1}\le C\|\varphi\|_{Y^{1/4}}\le C \|\varphi_1\|_{\dot{H}^{1/4}}.\]
    Here the constant $C$ depends on $\|w\|_{\dot{Y}^{1/4+\sigma}_1}$, which is in turn determined by $R$.
\end{itemize}
Gathering these observations, and applying Lemma~\ref{Lemma-fullobs}, we have
\begin{align*}
\int_{t_1}^{t_2}\|(Id-\mathcal{P}_N)\chi \widetilde{G}\varphi(t)\|_{\dot{H}^{-s}(s_1,s_2)}^2dt&\leq \delta_N\|\widetilde{G}\varphi\|_{X^{r,b}_1}^2\leq C\delta_N\|\varphi\|_{X^{r+2b,b}_1}^2\\
&\leq C\delta_N\|\varphi_1\|_{\dot{H}^{1/4}}^2 \leq C(m)\delta_N\|\varphi_1\|_{\dot{H}^{-s}}^2\\
&\leq C(m)\delta_N\int_0^T\|G\varphi(t)\|_{\dot{H}^{-s}(s_1,s_2)}^2dt,
\end{align*}
Let us take $N=N(m)$ sufficiently large so that $C(m)\delta_N\le 1/2$.
Substituting into \eqref{truncated-bound-1} yields
\[\int_{t_1}^{t_2}\|G\varphi(t)\|_{\dot{H}^{-s}(s_1,s_2)}^2dt\leq C \int_{t_1}^{t_2}\|\mathcal{P}_NG\varphi(t)\|_{\dot{H}^{-s}(s_1,s_2)}^2dt+\frac{1}{2}\int_{t_1}^{t_2} \|G\varphi(t)\|_{\dot{H}^{-s}(s_1,s_2)}^2dt.\]
The second term on the right-hand side can be absorbed in the left-hand side. Thus we conclude \eqref{Truncated-obs} by using again \eqref{Full-obs}, where the constant $C_0$ is taken as $C_0=C_1/2C$.
\end{proof}

The derivation from truncated observability to the low-frequency controllability, i.e.~Proposition~\ref{Prop-LFcontrol}, follows from the standard HUM argument (cf.~\cite{LWX-24,CXZZ-25}). For the sake of completeness, we provide a sketched proof in Appendix~\ref{Section-HUM}.
\vspace{2mm}

It remains to demonstrate Lemma~\ref{Lemma-fullobs}, constituting the most complicated part of this subsection. The proof follows from a similar method as in \cite{RZ-96,RZ-06,LRZ-10}, where the equations are essentially the nonlinear KdV, and the control is localized by a smooth cut-off. Heuristically, in view of the compactness-uniqueness method, if this lemma fails, then we can construct a sequence such that $\|\varphi_1^n\|_{\dot{H}^{-s}}=1$ and $\|G\varphi^n\|_{L^2(t_1,t_2;\dot{H}^{-s}(s_1,s_2))}\to 0$. Extracting a convergent subsequence, we find a limit solution $\varphi$ with $\|\varphi_1\|_{\dot{H}^{-s}}=1$ and $G\varphi=0$, which implies (after suitable modifications) that $\varphi(t,x)=0$ for any $t\in (t_1,t_2)$ and $x\in(s_1,s_2)$. And finally a unique continuation property implies $\varphi\equiv 0$, which contradicts $\|\varphi_1\|_{\dot{H}^n}=1$. Nevertheless, here some technical delicacies arise, primarily due to the low regularity of $w$ and $\varphi$:

\begin{itemize}
\item[\tiny$\bullet$] The potential term $w \partial_x \varphi-\widehat{w\partial_x \varphi}(0)$ is harder to be bounded than $\partial_x (u^2)$ in the nonlinear KdV equation, which forces us to establish new bilinear estimates (see Lemma~\ref{Lemma-bilinear} and Corollary~\ref{Coro-bilinear}). For instance, if a sequence $u^n$ converges in $L^2(0,T;L^2)$, then it is immediate that $\partial_x (u^n)^2$ converges in $H^{-1}(0,T;L^1)$. On the other hand, $w\partial_x \varphi$ cannot be bounded by Sobolev norms in our setting.

\item[\tiny$\bullet$] The unique continuation property is also new due to lower regularities, to be compared with the prior works \cite{RZ-06,LRZ-10}  dealing with $w\in L^\infty(0,T;L^\infty)$ and $\varphi\in L^2(0,T;H^1)$. In the current situation, despite propagation of compactness allows us to improve the regularity of $\varphi$ to $L^2(0,T;H^{3/4+})$, it is still insufficient to carry out the reasoning from known results. We refer to Proposition~\ref{Prop-UCP} for further discussion.
\end{itemize}

\begin{proof}[{\bf Proof of Lemma~\ref{Lemma-fullobs}}]
One may assume the damping coefficient $a=0$; the setting $a>0$ can be translated into this case via $\varphi\mapsto e^{at}\varphi$, while $w$ is not changed and the conclusion is not affected owing to $e^{at}\in [1,e^{aT}]$. The proof is divided into three steps.

\medskip

\textit{Step 1 $($contradiction argument$)$.}
Assume that there exist sequences $\varphi^n_1$ and $w^n$ such that 
\begin{align}
& \|\varphi^n_1\|_{\dot{H}^{-s}}=1, \quad \|w^n\|_{\dot{Y}^{1/4+\sigma}_1}< R, \label{contradiction-2}\quad \int_{t_1}^{t_2}\|G\varphi^n(t)\|_{\dot{H}^{-s}(s_1,s_2)}^2dt\le \frac{1}{n}, \\
& \partial_t\varphi^n+\partial_x^3\varphi^n+f^n=0,\quad \varphi^n(1)=\varphi^n_1,\quad f^n:=w^n\partial_x\varphi^n-\widehat{w^n\partial_x\varphi^n}(0)\label{contradiction-1}.
\end{align}

Thanks to Proposition~\ref{Prop-linear-2}, Lemma~\ref{Lemma-bilinear} and \eqref{contradiction-2}, the sequence $\varphi^n$ is bounded in $\dot{Y}^{-s}_1\subset \dot{X}^{-s,1/2}_1$, and $f^n$ is bounded in $\dot{Z}^{-s}_1\subset \dot{X}^{-s,-1/2}_1$. 
In addition, let us fix a small constant
\[0<\varepsilon <\min \{(1/4+\sigma-s)/4,1/6,1/2-s\}.\]
Then the first inequality of Corollary~\ref{Coro-bilinear} (the parameter $s^*$ therein is taken as  $1/4+\sigma$ in the current setting) implies that $f^n$ is also bounded in $X^{-s-2\varepsilon,-1/2+\varepsilon}_1$; the latter space is compactly embedded in $X^{-s-3\varepsilon,-1/2}_1$, by Lemma~\ref{Lemma-Bourgain}(1).
		
Summarizing the above, there exists a subsequence (which we still denote with $n$), 
\begin{align}
w^n\rightarrow w\quad &{\rm weakly\ in\ }\dot{X}^{1/4+\sigma,1/2}_1 \ {\rm and\ strongly\ in\ }\dot{X}^{1/4+\sigma-\varepsilon,1/2-\varepsilon}_1 ,\label{convergence-1}\\
\varphi_1^n\rightarrow  \varphi_1\quad &{\rm weakly\ in\ }\dot{H}^{-s}, \notag\\
\varphi^n\rightarrow \varphi\quad &{\rm weakly\ in\ }\dot{X}^{-s,1/2}_1 \ {\rm and\ strongly\ in\ }\dot{X}^{-s-\varepsilon,1/2-\varepsilon}_1, \label{convergence-2}\\
f^n\rightarrow f\quad &{\rm weakly\ in\ }\dot{X}^{-s,-1/2}_1\ {\rm and\ strongly\ in\ }\dot{X}^{-s-3\varepsilon,-1/2}_1, \label{convergence-3}
\end{align}
as well as
\begin{equation}\label{convergence-4}
\partial_t\varphi+\partial_x^3\varphi+f=0,\quad \varphi(1)=\varphi_1.
\end{equation}
We point out that this equation holds in the sense of Duhamel formula (where the equality holds in $X^{-s,-1/2}_1$), according to the fact that for linear operators, the continuity in strong and weak topologies are equivalent. In particular, the estimates in Bourgain spaces, namely Lemma~\ref{lem_Bougainproperty}(1)-(3), yield that $\varphi\in \dot{Y}^{-s}_1$, and therefore $\varphi$ is a mild solution of \eqref{convergence-4}.

Although multiplication does not preserve weak convergence, we can still verify that
\begin{equation}\label{contradiction-8}
f=w\partial_x\varphi-\widehat{w\partial_x\varphi}(0).
\end{equation}
Indeed, in view of the strong convergences from \eqref{convergence-1} and \eqref{convergence-2}, as well as the second estimate in Corollary~\ref{Coro-bilinear} (with $s^*$ and $-s$ taken as $1/4+\sigma-\varepsilon$ and $-s-\varepsilon$, respectively), we obtain
\begin{equation}\label{contradiction-9}
f^n \to w \partial_x \varphi-\widehat{w \partial_x \varphi}(0) \quad {\rm strongly\ in\ }X^{-s-3\varepsilon,-1/2}_1.
\end{equation}
This together with \eqref{convergence-3} immediately implies \eqref{contradiction-8}.

\medskip
		
\textit{Step 2 $($strong convergence$)$.} This step aims to show
\begin{equation}\label{contradiction-6}
\varphi^n\rightarrow \varphi\quad {\rm strongly\ in\ }L^2_{loc}(t_1,t_2;H^{-s}(\T)).
\end{equation}
To this end, we first claim that 
\begin{equation}\label{contradiction-4}
G\varphi\equiv 0.
\end{equation}
Indeed, in view of \eqref{convergence-2} one gets that $\varphi^n\rightarrow \varphi$ weakly in $L^2(0,1;H^{-s})$. Notice that the multiplication by $\chi$ is bounded on $L^2 (0,1;H^s(\T))$. Accordingly,
\[G\varphi^n=\chi\left[\varphi^n-\tfrac{2\pi}{|s_1-s_2|}\widehat{\mathbf{1}_{(s_1,s_2)}\varphi^n}(0)\right]\rightarrow G\varphi\quad \text{weakly in }L^2(0,1;H^{-s}).\]
This together with \eqref{contradiction-2} leads to \eqref{contradiction-4}, as desired. 

To continue, we let $\psi^n=\varphi^n-\varphi$, and observe that by \eqref{contradiction-2} and \eqref{contradiction-4},
\begin{equation*}
\int_{t_1}^{t_2} \|G\psi^n\|_{H^{-s}(\T)}^2 dt\lesssim \int_{t_1}^{t_2}\|G\psi^n\|^2_{\dot{H}^{-s}(s_1,s_2)}dt=\int_{t_1}^{t_2}\|G\varphi^n\|^2_{\dot{H}^{-s}(s_1,s_2)}dt\to 0
\end{equation*}
(here we tacitly use the fact that $\dot{H}^{-s}(s_1,s_2)$ norm and $\dot{H}^{-s}(\T)$ norm are equivalent for functions supported in $(s_1,s_2)$).
At the same time, by \eqref{convergence-2} we have $\psi^n\rightarrow 0$ strongly in $L^2(0,1;H^{-s-\varepsilon})$. Thanks to $s+\varepsilon<1/2$, this yields that
\[\int_0^1|\widehat{\boldsymbol{1}_{(s_1,s_2)}\psi^n}(0)|^2dt\lesssim\int_0^1|\langle \boldsymbol{1}_{(s_1,s_2)},\psi^n\rangle_{H^{s+\varepsilon},H^{-s-\varepsilon}}|^2dt\lesssim \|\boldsymbol{1}_{(s_1,s_2)}\|_{H^{s+\varepsilon}}^2\int_0^1 \|\psi^n\|_{H^{-s-\varepsilon}}^2dt\rightarrow 0.\]
We thus derive from the last two inequalities (and the definition of $G$) that
\[\int_{t_1}^{t_2}\|\boldsymbol{1}_{(s_1,s_2)}\psi^n\|^2_{H^{-s}(\T)}dt\rightarrow 0.\]

Let us fix $(\tilde s_1,\tilde s_2) \Subset (s_1,s_2)$, and choose a cut-off function $\tilde \chi\in C^\infty(\T)$ satisfying 
\[\tilde\chi(x)=1,\quad x\in (\tilde s_1,\tilde s_2)\quad {\rm and }\quad \tilde\chi(x)=0,\quad x\notin (s_1,s_2).\]
It then follows that 
\begin{equation}\label{contradiction-7}
\int_{t_1}^{t_2}\|\tilde\chi\psi^n\|^2_{H^{-s}}dt=\int_{t_1}^{t_2}\|\tilde\chi\boldsymbol{1}_{(s_1,s_2)}\psi^n\|^2_{H^{-s}}dt\leq C\int_{t_1}^{t_2}\|\boldsymbol{1}_{(s_1,s_2)}\psi^n\|^2_{H^{-s}}dt\rightarrow 0.
\end{equation}

For $u\colon\T\rightarrow\R$, we define $D^{-s}u$ by $\widehat{D^{-s}u}(k)={\rm sgn}(k)|k|^{-s}\widehat u(k)$ for $k\neq 0$ and $\widehat{D^{-s}u}(0)=\widehat u(0)$. Then, let us introduce
\[\Psi^n=D^{-s}\psi^n,\quad  F^n=D^{-s}(f^n-f).\]
It follows from \eqref{contradiction-1} and \eqref{convergence-2}-\eqref{convergence-4} that
\begin{align*}
& \|\Psi^n\|_{\dot{X}_1^{0,1/2}}\leq C,\quad \Psi^n\rightarrow 0\quad {\rm strongly\ in\ }\dot{X}^{-\varepsilon,1/2-\varepsilon}_1,\\
& F^n\rightarrow 0\quad {\rm strongly\ in\ }\dot{X}^{-3\varepsilon,-1/2}_1
\end{align*}
and (in the sense of mild solution)
\[\partial_t\Psi^n+\partial_x^3\Psi^n+F^n=0.\]
At the same time, letting $[A,B]=AB-BA$ the commutator of linear operators, we notice that
\[\tilde \chi\Psi^n=[\tilde\chi,D^{-s}](\varphi^n-\varphi)+D^{-s}[\tilde\chi (\varphi^n-\varphi)].\]
This together with \eqref{convergence-2}, \eqref{contradiction-7} and $[\tilde\chi,D^{-s}]\in\mathcal L(H^{-s-1};L^2)$ (see, e.g.~\cite[Lemma A.1]{Laurent-ECOCV}) indicates that $\tilde \chi\Psi^n\rightarrow 0$ strongly in $L^2(t_1,t_2;L^2(\T))$, and thus
\[\Psi^n\rightarrow 0\quad {\rm strongly\ in\ }L^2(t_1,t_2;L^2(\tilde s_1,\tilde s_2)).\]
In conclusion, an application of Proposition~\ref{Prop-propagation-1}, with $b=1/2$, $b'=0$, $\omega=(\tilde s_1,\tilde s_2)$ and $(0,T)$ replaced by $(t_1,t_2)$, yields that
\[\Psi^n\rightarrow 0\quad {\rm strongly\ in\ }L^2_{loc}(t_1,t_2;L^2(\T)).\]
This is exactly the desired convergence \eqref{contradiction-6}.

\medskip
		
\textit{Step 3 $($completing the proof$)$.} Recalling identity \eqref{contradiction-4}, we derive that 
\begin{equation}\label{contradiction-5}
\varphi(t,x)\equiv c(t)\quad {\rm on\ }(t_1,t_2)\times (s_1,s_2).
\end{equation}
for some time-dependent function $c(t)$. Let us fix $(\tilde{t}_1,\tilde{t}_2)\Subset (t_1,t_2)$ and claim that $c\in H^{1/2}([\tilde{t}_1,\tilde{t}_2])$ (as a result, $(x,t)\mapsto c(t)$ belongs to $X^{r,1/2}_{[\tilde{t}_1,\tilde{t}_2]}$ for any $r\in \R$). Indeed, take any $g\in C_0^\infty((s_1,s_2))$, then Lemma~\ref{Lemma-Bourgain}(2) implies $g(x)\varphi(t,x)\in X^{-s-1,1/2}_{1}$; since $g(x)\varphi(t,x)= g(x)c(t)$ on $(t_1,t_2)\times \T$, we get $g(x)c(t)\in X^{-s-1,1/2}_{[\tilde{t}_1,\tilde{t}_2]}$, and thus the claim easily follows.

Combining \eqref{convergence-4} and \eqref{contradiction-8}, we find that 
\begin{equation}\label{equation-1}
\partial_t\varphi+\partial_x^3\varphi+w\partial_x\varphi-\widehat{w\partial_x\varphi}(0)=0,\quad t\in [\tilde{t}_1,\tilde{t}_2].
\end{equation}
Meanwhile, it follows from  
\eqref{contradiction-5} and \eqref{equation-1} that 
\[c'(t)=\widehat{w\partial_x \varphi}(0),\quad t\in(t_1,t_2).\]
Consider the auxiliary function $\tilde\varphi(t,x):=\varphi(t,x)-c(t)\in X^{-s,1/2}_1$, which satisfies
\begin{equation}\label{equation-2}
\partial_t \tilde\varphi+\partial_x^3 \tilde\varphi+w\partial_x \tilde\varphi=0,\quad t\in [\tilde{t}_1,\tilde{t}_2]
\end{equation}
and
\[\tilde\varphi(t,x)\equiv 0,\quad \forall (t,x)\in  [\tilde{t}_1,\tilde{t}_2]\times (s_1,s_2).\]

Let us proceed to increase the (spatial) regularity of $\tilde\varphi$ from $X^{-s,1/2}_{[\tilde{t}_1,\tilde{t}_2]}\subset L^2([\tilde{t}_1,\tilde{t}_2],H^{-s}(\T))$ to 
\begin{equation}\label{H3/4+sigma-}
    \tilde\varphi\in L^2_{loc} (\tilde{t}_1,\tilde{t}_2; H^{3/4+\sigma-}(\T)).
\end{equation}
To this end, notice that by Lemma~\ref{Lemma-bilinear},
\[w\partial_x \tilde\varphi=w\partial_x \varphi=w\partial_x\varphi-\widehat{w\partial_x\varphi}(0)+c'(t)=f+c'(t)\in X^{-s,-1/2}_{[\tilde{t}_1,\tilde{t}_2]}.\]
Therefore, an application of propagation of regularity (Proposition~\ref{Prop-propagation-2}, with $r=-s$, $b=1/2$ and $[0,T]$ replaced by $[\tilde{t}_1,\tilde{t}_2]$) yields that $\tilde\varphi\in L^2_{loc}(\tilde{t}_1,\tilde{t}_2;H^{-s+1/2})$, and therefore $\varphi\in L^2_{loc} (\tilde{t}_1,\tilde{t}_2;\dot{H}^{-s+1/2})$ as well. As pointed out in Step 1, $\varphi$ is a mild solution of \eqref{equation-1}, and belongs to $\dot{Y}^{-s}_{[\tilde{t}_1,\tilde{t}_2]}$. We can pick $t'\in (\tilde{t}_1,\tilde{t}_2)$ so that $\varphi(t')\in \dot{H}^{-s+1/2}$, the uniqueness of solution (Proposition~\ref{Prop-linear-1}) implies that $\varphi\in \dot{Y}^{-s+1/2}_{[\tilde{t}_1,\tilde{t}_2]}$. In particular, we have lifted the regularity of $\tilde{\varphi}$ to $X^{-s+1/2,1/2}_{[\tilde{t}_1,\tilde{t}_2]}$. Repeating the above procedure, one can arrive at $\tilde\varphi\in X^{1/4+\sigma-,1/2}_{[\tilde{t}_1,\tilde{t}_2]}$ (which is the maximal regularity for Proposition~\ref{Prop-linear-1} to hold, since $w\in \dot{Y}^{1/4+\sigma}_{[\tilde{t}_1,\tilde{t}_2]}$). We apply propagation of regularity once more to obtain \eqref{H3/4+sigma-}.

Finally, we need a ``unique continuation property'', stated in Proposition~\ref{Prop-UCP} below, which directly implies that $\tilde\varphi(t,x)\equiv 0$, namely $\varphi(t,x)\equiv c(t)$,  on $[\tilde{t}_1,\tilde{t}_2]\times \T$. Since $\varphi(t,\cdot)$ is of mean-zero, one has $\varphi(t,x)\equiv 0$ on $[\tilde{t}_1,\tilde{t}_2]\times\T$. Recalling \eqref{contradiction-6} one can take $t_0\in (\tilde{t}_1,\tilde{t}_2)$ satisfying
\[\varphi^n(t_0)\rightarrow \varphi(t_0)=0\quad {\rm strongly\ in\ }H^{-s}.\]
This, combined with \eqref{contradiction-2}, implies that 
\[1=\|\varphi_1^n\|_{H^{-s}}\leq C\|\varphi^n(t_0)\|_{H^{-s}}\rightarrow 0,\]
which is a contradiction. The proof is then complete.
\end{proof}

We supply Proposition~\ref{Prop-UCP} below for the unique continuation result required in Step 3 for $H^{3/4+\delta}$ solution of \eqref{equation-2}\footnote{The extra $\delta>0$ is in fact unnecessary. However, we are content with this slightly weaker formulation, to avoid some endpoint cases in Sobolev product estimates \eqref{productSobolev}.}. We also refer the reader to \cite[Corollary 3.8]{LRZ-10} for unique continuation of nonlinear KdV, and \cite[Lemma 3.5]{RZ-06} for unique continuation when the potential is $L^\infty$ and the solution is $H^1$. Under the assumptions of $w\in H^{1/4+\delta}$ and $u\in H^{3/4+\delta}$, the potential term $f=w\partial_x u$ belongs to $H^{-1/2}$. Our new observation is that, if
\[\partial_t u+\partial_x^3 u+f=0\]
and $u$ vanishes on a fixed non-empty open set $\omega\subset \T$, then $u$ can gain {\it 2} derivatives from $f$ (this is better than the $1/2$ derivative gain by propagation of regularity, and the $1$ derivative gain by Kato smoothing on the real line). Consequently, we find that $u\in H^{3/2}$, which compensates for $w\not\in L^\infty$ and allows us to modify the arguments in \cite{RZ-06} and finish the proof.

\begin{proposition}[Unique continuation]\label{Prop-UCP}
Let $T>0$, $\delta>0$, and a non-empty open set $\omega\subset \T$ be arbitrarily given. If $w\in L^\infty(0,T; H^{1/4+\delta})$ and $u\in L^2(0,T; H^{3/4+\delta})$ satisfy (in the sense of distributions)
\[\begin{cases}
\partial_t u+\partial_x^3 u+w\partial_x u=0,\quad x\in\T,\\
u(t,x)\equiv 0\quad \text{on }[0,T]\times\omega,
\end{cases}\]
then $u(t,x)\equiv 0$ on $[0,T]\times\T$.
\end{proposition}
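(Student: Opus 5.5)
The plan has two stages. First we gain regularity by exploiting the vanishing on $\omega$. Then we run a Carleman-type unique continuation argument in the spirit of \cite[Lemma 3.5]{RZ-06}, with the gained regularity substituting for $w\in L^\infty$.

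\emph{Regularity gain.} Set $f:=w\partial_x u$. By the Sobolev product estimate on $\T$, $w\in H^{1/4+\delta}$ and $\partial_x u\in H^{-1/4+\delta}$ give $f\in L^2(0,T;H^{-1/2})$, with room to spare for small $\delta$. Since $u$ vanishes on $[0,T]\times\omega$, so do $\partial_x u$ and $f$. The equation $\partial_x^3u=-\partial_t u-f$ holds on all of $\T$. The point is that $\partial_x^{-1}$ on $\T$ (mean zero) can be realized by integrating from a point $x_0\in\omega$: the antiderivative $F(t,x)=\int_{x_0}^x f$ makes sense and is supported away from $\omega$, and likewise for $u$. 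I therefore intend to integrate the equation three times in $x$ starting from $x_0\in\omega$. Writing $\partial_x^{-1}$ for this antiderivative, the result is
\[
u=-\partial_x^{-3}\partial_t u-\partial_x^{-3}f,
\]
where every term vanishes on $\omega$, so no integration constants appear. The term $\partial_x^{-3}f$ lies in $L^2(0,T;H^{5/2})$. The term $\partial_x^{-3}\partial_t u$ is the problematic one, because it carries a time derivative. I would handle it by working in the time-frequency picture: apply the time Fourier transform after localizing with a cutoff in $t$, and use $i\tau\hat u+\partial_x^3\hat u=-\hat f$ with $\hat u$ vanishing on $\omega$. On $\T$ with $u=0$ on $\omega$, the operator $\partial_x^3+i\tau$ with these "boundary" conditions (equivalently, an ODE on the interval $\T\setminus\omega$ with zero Cauchy data at both ends) should be invertible with a gain of two derivatives in a suitable parabolic-weighted sense. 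This yields $u\in L^2_{loc}(0,T;H^{3/2})$, which is exactly the claimed gain of two derivatives relative to $f\in H^{-1/2}$.

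I expect this step to be the main obstacle, because the elliptic-in-$x$ estimate for $\partial_x^3+i\tau$ must be uniform in $\tau$. My first attempt would be to solve the ODE $\partial_x^3 v+i\tau v=g$ on the interval $I=\T\setminus\overline\omega$ with $v,v',v''$ vanishing at one endpoint. Using the explicit Green's function built from the cube roots of $-i\tau$, I would check that $\|v\|_{H^{3/2}}\lesssim\|g\|_{H^{-1/2}}$ uniformly in $\tau$. A negative Sobolev space on an interval is delicate, so I may instead argue by interpolation between $L^2\to H^3$ and $H^{-1}\to H^{2}$ bounds.

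\emph{Unique continuation.} Once $u\in L^2(0,T;H^{3/2})$ is known, $w\partial_x u$ is controlled well enough: $\partial_x u\in H^{1/2}$ and $w\in H^{1/4+\delta}$, so $w\partial_x u\in L^2$ with a bound $\|w\|_{H^{1/4+\delta}}\|u\|_{H^{3/2}}$ (for instance via $L^4\times L^4$). I would then apply the Carleman estimate for $\partial_t+\partial_x^3$ used in \cite{RZ-06}, with weight $e^{s\phi}$ where $\phi$ is a weight singular at $t=0,T$ and has its critical region inside $\omega$, and cut off so that the error terms live in $\omega$, where $u=0$. The Carleman estimate controls $s^{5}\|e^{s\phi}u\|^2+s^3\|e^{s\phi}\partial_x u\|^2+s\|e^{s\phi}\partial_x^2u\|^2$ by $\|e^{s\phi}(w\partial_xu)\|_{L^2}^2$. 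To absorb the potential term I would bound $\|e^{s\phi}w\partial_x u\|_{L^2}$ by $\|w\|_{L^4}\|e^{s\phi}\partial_x u\|_{L^4}$. Gagliardo--Nirenberg then interpolates $\|e^{s\phi}\partial_x u\|_{L^4}$ between $\|e^{s\phi}\partial_x u\|_{L^2}$ and $\|e^{s\phi}\partial_x^2u\|_{L^2}$, producing $s^{a}$ powers strictly below the left-hand side's weights. Commutators with $e^{s\phi}$ are lower order in $s$. Taking $s$ large gives $u\equiv0$, first on a central time slab, and then on all of $[0,T]$ by translating the time window.
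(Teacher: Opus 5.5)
\paragraph{The two-derivative gain is asserted, not proved.}
Your two-stage plan is the paper's: first gain regularity up to $L^2_{loc}(0,T;H^{3/2})$ from the vanishing on $\omega$, then run the Carleman estimate of \cite[Lemma 3.3]{RZ-06} and absorb the potential by Gagliardo--Nirenberg. The problem is the step you flag as the crux: a uniform-in-$\tau$ gain of two derivatives for $\partial_x^3+i\tau$ on functions vanishing on $\omega$. You only assert it, and neither proposed proof works.

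\emph{The interpolation route.} The endpoints $L^2\to H^3$ and $H^{-1}\to H^2$ are three-derivative gains, and they are false uniformly in $\tau$. Take $\phi\in C_c^\infty(\T\setminus\overline{\omega})$, $v=\phi e^{i\mu x}$ and $\tau=\mu^3$. Then $(\partial_x^3+i\tau)v=e^{i\mu x}(\phi'''+3i\mu\phi''-3\mu^2\phi')$ has $L^2$ norm $\sim\mu^2$ and $H^{-1}$ norm $\sim\mu$, while $\|v\|_{H^3}\sim\mu^3$ and $\|v\|_{H^2}\sim\mu^2$. Interpolating those endpoints would in any case give $H^{-1/2}\to H^{5/2}$, not $H^{3/2}$. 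Interpolating a priori lower bounds for an operator whose range is a $\tau$-dependent subspace of codimension three is also not automatic.

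\emph{The Green's function route.} One cube root of $-i\tau$ has real part $\tfrac{\sqrt3}{2}|\tau|^{1/3}$. So the Volterra kernel with $v=v'=v''=0$ at one endpoint has size $|\tau|^{-2/3}e^{c|\tau|^{1/3}}$. Uniformity can only come from the vanishing at the other endpoint, which that representation ignores. Similarly, in your triple integration from $x_0\in\omega$, the terms $\partial_x^{-3}\partial_t u$ and $\partial_x^{-3}f$ need not vanish separately on the far side of $\omega$; only their sum does.

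\emph{The missing idea} is the multiplier $x\bar v$. Identify $\T$ with $[0,2\pi]$ and shrink $\omega$ to a neighbourhood of $0\sim 2\pi$, so $v$ vanishes near both endpoints. Then $\operatorname{Re}\int_0^{2\pi}x\bar v\,(\partial_x^3v+i\tau v)\,dx=\tfrac32\|\partial_xv\|_{L^2}^2$, since the $i\tau$ term is purely imaginary. This gives $\|\partial_xv\|_{L^2}\lesssim\|g\|_{H^{-1}}+\|v\|_{L^2}$ uniformly in $\tau$, using $xv=\zeta v$ for a smooth $\zeta$. The paper does exactly this in the time domain, multiplying by $xv$ and integrating in $t$.

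The remaining half derivative comes from applying the same bound to $D_hv=v(\cdot+h)-v$ for $h<\varepsilon/2$, which still vanishes near the endpoints. Combine this with $\|g\|_{\dot H^s}^2\simeq\int_0^{\pi/2}h^{-1-2s}\|D_hg\|_{L^2}^2\,dh$ to get $\|v\|_{L^2H^{3/2}}\lesssim\|f\|_{L^2H^{-1/2}}+\|v\|_{L^2H^{1/2}}$. The last term is finite by hypothesis. With this fix, your Fourier-in-time formulation is just the paper's argument carried out at each fixed $\tau$.

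\paragraph{The Carleman step needs a regularization.}
Your absorption via $\|w\|_{L^4}\|e^{s\phi}\partial_xu\|_{L^4}$ and Gagliardo--Nirenberg is correct. The paper uses $\|w\|_{L^2}\|\cdot\|_{L^\infty}$ instead, so it only needs $w\in L^\infty L^2$ at that point.

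However, the estimate of \cite{RZ-06} is for functions with $H^3$ regularity in $x$ and $\partial_x^ju=0$, $j=0,1,2$, at the endpoints. The regularity $u\in L^2H^{3/2}$ does not provide this, and you apply the estimate to $u$ directly. The paper instead applies it to the time averages $u^h=\frac1h\int_t^{t+h}u\,dr$. For these, $\partial_tu^h$ has the spatial regularity of $u$, hence $\partial_x^3u^h\in L^2L^2$. The paper then absorbs $w\partial_xu^h$ and shows $[w\partial_xu]^h-w\partial_xu^h\to0$ in $L^2$ as $h\to0$. Spatial mollification, which preserves vanishing on a smaller $\omega$, would also work.

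Finally, you do not need to translate time windows. After absorption, only the vanishing regularization error remains on the right, so $u=0$ on all of $(0,T')$ for every $T'<T$.
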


\begin{proof}[{\bf Proof of Proposition~\ref{Prop-UCP}}]
The proof consists of two steps.

\medskip
		
\textit{Step 1 $($regularity of $u$$)$.} We show that the regularity of $u$ can be improved to $L^2_{loc}(0,T;H^{3/2})$. First recall a special case of the well-known product estimate in Sobolev spaces: if $s_1+s_2\ge 0$ and $s<\min\{s_1,s_2,s_1+s_2-1/2\}$, then for any $g,h\colon \T\to \R$, we have
\begin{equation}\label{productSobolev}
\|gh\|_{H^{s}(\T)}\lesssim \|g\|_{H^{s_1}(\T)} \|h\|_{H^{s_2}(\T)}.
\end{equation}
In addition, for any $s\in (0,1)$, the $\dot H^s (\T)$ norm of mean zero $g\colon \T\to \R$ is equivalent to
\begin{equation}\label{eq_H1/4equivalent}
\|g\|_{\dot H^s}=\left(\int_0^{\pi/2} \frac{\|D_h g\|_{L^2}^2}{h^{1+2s}}\, dh\right)^{1/2},
\end{equation}
where $D_h g:=g(\cdot+h)-g$ is the difference of $g$.

Fix any $\xi\in C_0^\infty(\R)$ with $\supp(\xi)\subset (0,T)$, then it suffices to address that $v:=\xi(t) u$ belongs to $L^2(\R;H^{3/2})$. Note that  $\|v\|_{L^2(\R;H^{3/4+\delta})}\le C\|u\|_{L^2(0,T;H^{3/4+\delta})}$ (the constants hereinafter may depend on $\xi$ and the open set $\omega$), and $v$ satisfies the equation
\begin{equation}\label{airy+2}
\partial_t v+\partial_x^3 v+f=0,\quad \text{where }f:=\xi(t) (w\partial_x u)-\xi'(t) u.
\end{equation}
Thanks to \eqref{productSobolev}, we find
\begin{equation}\label{fH-1/2}
\begin{aligned}
\|f\|_{L^2(\R;H^{-1/2})}&\le C \left(\|w\partial_x u\|_{L^2(0,T;H^{-1/2})}+\|u\|_{L^2(0,T;H^{-1/2})}\right)\\
&\le C\left(\|w\|_{L^\infty(0,T;H^{1/4+\delta})} \|u\|_{L^2(0,T;H^{3/4+\delta})}+\|u\|_{L^2(0,T;H^{-1/2})}\right)<\infty.
\end{aligned}
\end{equation}
We claim that $v$ gains two derivatives from $f$, namely
\begin{equation}\label{eq_H3/2estimate}
\|v\|_{L^2(\R;{H}^{3/2})}\le C\left(\|f\|_{L^2(\R;H^{-1/2})}+\|v\|_{L^2(\R;H^{1/2})}\right)<\infty.
\end{equation}

To prove the claim, we can assume that $v$ and $f$ are smooth, since convolution with a mollifier does not ruin the condition that $v=0$ on $\R\times \omega$, up to replacing $\omega$ with a smaller open set. In addition, let us identify $\T$ with $[0,2\pi]$ modulo $0\sim 2\pi$, and assume without loss of generality
\begin{equation}\label{itervalomega}
\omega=[0,\varepsilon)\cup (2\pi-\varepsilon,2\pi].
\end{equation}
Let us multiply \eqref{airy+2} by $x u$ and integrate with respect to $x\in [0,2\pi]$. Note that $v(t,x)$ vanishes near $x=0$ and $x=2\pi$ for any $t\in \R$, which leads to
\[\int_0^{2\pi} xv \partial_x^3 v dx=-\int_0^{2\pi} (v+x\partial_x v)\partial_x^2 v dx=\|\partial_x v\|_{L^2}^2-\frac{1}{2}\int_0^{2\pi}   x \frac{d}{dx}(\partial_x v)^2 dx=\frac{3}{2} \|\partial_x v\|_{L^2}^2.\]
Therefore
\begin{equation}\label{airy-xuinequalti}
    \frac{1}{2}\frac{d}{dt}\int_0^{2\pi} xv^2 dx+\frac{3}{2}\|\partial_x v\|_{L^2}^2=-\int_0^{2\pi} x v f dx.
\end{equation}
The right-hand side can be bounded by
\begin{align*}
    \left|\int_0^{2\pi} x v f dx\right|&\le  \|xv\|_{H^1}\|f\|_{H^{-1}}\le C\|v\|_{H^1}\|f\|_{H^{-1}} \le C \left(\|v\|_{L^2}+\|\partial_x v\|_{L^2}\right)\|f\|_{H^{-1}}\\
    &\le \frac{1}{2}\|\partial_x v\|_{L^2}^2+C\left(\|f\|_{H^{-1}}^2+\|v\|_{L^2}^2\right).
\end{align*}
Here the estimate $\|xv\|_{H^1}\le C\|v\|_{H^1}$ may need some clarification, since by $H^1$ we mean the Sobolev space on $\T$, and $x$ is not a smooth function on $\T$. Indeed, we can fix any $\zeta\in C^\infty(\T)$ so that $\zeta$ coincides with $x$ on $(\varepsilon/2,2\pi-\varepsilon/2)$. 
This, together with the condition $v(t,x)=0$ for $x\in \omega$, indicates
$x v=\zeta v$. Thus we can safely use the product estimate on $\T$. The first term can be absorbed into the left-hand side of \eqref{airy-xuinequalti}, which yields
\[\frac{1}{2}\frac{d}{dt}\int_0^{2\pi} xv^2 dx+\|\partial_x v\|_{L^2}^2\le C\left(\|f\|_{H^{-1}}^2+\|v\|_{L^2}^2\right).\]
Integrating with respect to $t\in \R$, the first term vanishes as $v(t)=0$ for $t\not \in (0,T)$, and thus
\begin{equation}\label{eq_vH1estimate}
\|\partial_x v\|_{L^2(\R;L^2)}^2\le C\left(\| f\|_{L^2(\R;H^{-1})}^2+\|v\|_{L^2(\R;L^2)}^2\right).
\end{equation}

To recover the extra $1/2$ derivative in \eqref{eq_H3/2estimate}, let us replace $v$ by its difference $D_h v$ for $h\in (0,\pi)$, and note that $D_h$ commutes with derivatives. Then using the equivalent norm \eqref{eq_H1/4equivalent} leads to
\[\|v\|_{L^2(\R;{H}^{3/2})}^2\lesssim \|v\|_{L^2(\R;L^2)}^2+\|\partial_x v\|_{L^2(\R;\dot{H}^{1/2})}^2\lesssim \|v\|_{L^2(\R;L^2)}^2+\int_0^{\pi/2} \frac{\|\partial_x D_h v\|_{L^2(\R;L^2)}^2}{h^2}\, dh.\]
The first term is clearly bounded by the right-hand side of \eqref{eq_H3/2estimate}. For the second term, when $h\ge \varepsilon/2$, we use the trivial bound
\[\|\partial_x D_h v\|_{L^2(\R;L^2)}\le \|\partial_x v(\cdot+h)\|_{L^2(\R;L^2)}+\|\partial_x v\|_{L^2(\R;L^2)}\le 2\|\partial_x v\|_{L^2(\R;L^2)}.\]
And when $h<\varepsilon/2$, the difference $D_h v$ satisfies
\[\partial_t D_h v+\partial_x^3 D_h v+D_h f=0.\]
Since $D_h v$ vanishes in $[0,\varepsilon/2)\cup (2\pi-\varepsilon/2,2\pi]$, the inequality \eqref{eq_vH1estimate} is still valid with $v,f$ replaced by $D_hv, D_hf$. Consequently, with implicit constants depending on $\varepsilon$, we have
\begin{align*}
\int_0^{\pi/2} \frac{\|\partial_x D_h v\|_{L^2(\R;L^2)}^2}{h^2}\, dh&\lesssim \int_0^{\varepsilon/2} \frac{\|(1-\Delta)^{-1/2} D_h f\|_{L^2(\R;L^2)}^2+\|D_h v\|_{L^2(\R;L^2)}^2}{h^2} dh+\|\partial_x v\|_{L^2(\R;L^2)}^2\\
&\lesssim \|(1-\Delta)^{-1/2} f\|_{L^2(\R;{H}^{1/2})}^2+\| v\|_{L^2(\R;{H}^{1/2})}^2+\|\partial_xv\|_{L^2(\R;L^2)}^2\\
&\lesssim \|f\|_{L^2(\R;H^{-1/2})}^2+\|v\|_{L^2(\R;H^{1/2})}^2+\|\partial_xv\|_{L^2(\R;L^2)}^2.
\end{align*}
In view of \eqref{fH-1/2} and \eqref{eq_vH1estimate}, we arrive at the claim \eqref{eq_H3/2estimate}. Specifically, the arbitrariness of $\xi$ implies $u\in L^2_{loc}(0,T;H^{3/2})$.

\medskip
		
\textit{Step 2 $($apply Carleman estimate$)$.} Since we can replace $[0,T]$ by any sub-interval, we can assume $u\in L^2(0,T;H^{3/2})$ without loss of generality. The rest of the proof only relies on a weaker condition
\[u\in L^2(0,T;H^{5/4}).\]
As a result, making use of \eqref{productSobolev} again we improve the estimate for $w\partial_x u$ in \eqref{fH-1/2}:
\begin{equation}\label{eq_L2tx}
\|w\partial_x u\|_{L^2(0,T;L^2)}\le C\|w\|_{L^\infty(0,T;H^{1/4+\delta})}\|u\|_{L^2(0,T;H^{5/4})}<\infty.
\end{equation}

In the sequel we follow the idea of \cite{RZ-06} to consider the integral in time and further raise the spatial regularity to $H^3$, in order to meet the assumptions for a Carleman estimate. Indeed, fix any $T'\in(0,T)$ and a small parameter $h\in(0,T-T')$, and set
\[u^h(t,x)=\frac{1}{h}\int_t^{t+h}u(r,x)dr,\quad t\in[0,T'].\]
Then it follows that $u^h\in H^1(0,T';H^{5/4})$ and 
\begin{equation*}
\begin{cases}
\partial_t u^h+\partial_x^3u^h+[w\partial_xu]^h=0,\quad x\in\T,\\
u^h(t,x)= 0\quad \text{on }[0,T']\times \omega.
\end{cases}
\end{equation*}
In view of $\partial_t u^h\in L^2(0,T';H^{5/4})$ and \eqref{eq_L2tx}, we find $\partial_x^3u^h\in L^2(0,T';L^2)$ and thus
\[u^h\in L^2(0,T';H^3).\]
In addition, by \eqref{itervalomega}, we still have 
\[\partial_x^j u^h(t,0)=\partial_x^j u^h(t,2\pi)=0,\quad j=0,1,2.\]
As a consequence, the function $u^h$ verifies the assumptions of Carleman estimate in \cite[Lemma 3.3]{RZ-06}), which implies that there exists a smooth positive function $\psi$ on $[0,2\pi]$ satisfying 
\begin{align}
&\int_0^{T'}\int_0^{2\pi}\left[
\frac{\lambda^5}{t^5(T'-t)^5}|u^h|^2+\frac{\lambda^3}{t^3(T'-t)^3}|\partial_xu^h|^2+\frac{\lambda}{t(T'-t)}|\partial_x^2u^h|^2
\right]\exp\left\{
-\frac{2\lambda\psi(x)}{t(T'-t)}
\right\}dxdt\notag\\
\lesssim &\int_0^{T'}\int_0^{2\pi} |\partial_t u^h+\partial_x^3 u^h|^2\exp\left\{
-\frac{2\lambda\psi(x)}{t(T'-t)}
\right\}dxdt\label{Carleman-estimate}\\
=&\int_0^{T'}\int_0^{2\pi} |[w\partial_xu]^h|^2\exp\left\{
-\frac{2\lambda\psi(x)}{t(T'-t)}
\right\}dxdt\notag
\end{align}
for any sufficiently large $\lambda>0$, where the implicit constant in $\lesssim$ does not depend on $\lambda$ and $u^h$. We split the right-hand side into two terms
\begin{equation}\label{Carleman-modify}
\begin{aligned}
    \text{RHS of }\eqref{Carleman-estimate}&\lesssim \int_0^{T'}\int_0^{2\pi}|w\partial_xu^h|^2\exp\left\{
-\frac{2\lambda\psi(x)}{t(T'-t)}
\right\}dxdt\\
&\quad + \int_0^{T'}\int_0^{2\pi}|[w\partial_xu]^h-w\partial_xu^h|^2\exp\left\{
-\frac{2\lambda\psi(x)}{t(T'-t)}
\right\}dxdt.
\end{aligned}
\end{equation}

Denoting $\vartheta(t,x)=\exp\left\{-\frac{\lambda\psi(x)}{t(T'-t)}\right\}$ for simplicity, one can check that
\begin{equation*}
\begin{aligned}
\|w\partial_xu^h\vartheta\|_{L^2}^2&\leq \|w\|_{L^2}^2\|\partial_xu^h\vartheta\|_{L^\infty}^2\lesssim \|w\|_{L^2}^2\|\partial_xu^h\vartheta\|_{L^2}\|\partial_xu^h\vartheta\|_{H^1}\\
&\lesssim \|w\|_{L^2}^2\left(\|\partial_xu^h\vartheta\|_{L^2}^2+\|\partial_xu^h\vartheta\|_{L^2}\|\partial_x^2u^h\vartheta\|_{L^2}+\|\partial_xu^h\partial_x\vartheta\|_{L^2}^2
\right).
\end{aligned}
\end{equation*}
This together with the fact $w\in L^\infty(0,T;L^2(\T))$ indicates that
\begin{equation*}
\begin{aligned}
&\int_0^{T'}\int_0^{2\pi}|w\partial_xu^h|^2\exp\left\{
-\frac{2\lambda\psi(x)}{t(T'-t)}
\right\}dtdx\\
&\lesssim \int_0^{T'}\int_0^{2\pi}\left(|\partial_xu^h|^2+\frac{\lambda^2}{t^2(T'-t)^2}|\partial_x u^h|^2+|\partial_x^2u^h|^2\right)
\exp\left\{
-\frac{2\lambda\psi(x)}{t(T'-t)}
\right\}dtdx.
\end{aligned}
\end{equation*}
Therefore, 
the first integral in \eqref{Carleman-modify} can be absorbed into the left-hand side of \eqref{Carleman-estimate} by increasing $\lambda$. At the same time, owing to the estimate \eqref{eq_L2tx}, when $h\rightarrow 0^+$ we have,
\begin{equation*}
\begin{aligned}
&[w\partial_xu]^h\rightarrow w\partial_xu \quad \text{in }L^2(0,T';L^2(\T)),\\
&w\partial_xu^h\rightarrow w\partial_xu \quad \text{in }L^2(0,T';L^2(\T)).
\end{aligned}
\end{equation*}
This indicates that the second integral in \eqref{Carleman-modify} converges to $0$. We thus conclude that 
\[\int_0^{T'}\int_0^{2\pi} 
\frac{\lambda^5}{t^5(T'-t)^5}|u^h|^2\exp\left\{
-\frac{2\lambda\psi(x)}{t(T'-t)}
\right\}dxdt\rightarrow 0.\]
This together with the fact $u^h\rightarrow u$ in $L^2(0,T';L^2)$ gives $u(t,x)=0$ on $(0,T')\times \T$. Finally, the conclusion of this proposition follows from the arbitrariness of $T'$.
\end{proof}

Finally, taking the HUM arguments in Appendix~\ref{Section-HUM} for granted, now the low frequency controllability (Proposition~\ref{Prop-LFcontrol}) has been established.

\subsection{High-frequency dissipation via nonlinear smoothing}\label{Section-HF}

The following result serves to deal with the high frequencies of controlled system \eqref{Control-problem}, which can be viewed as an extension of ``nonlinear smoothing'' (Proposition~\ref{prop_nonlinearsmoothing}) to linearized equation.

\begin{proposition}\label{Prop-linearized_nonlinearsmooothing}
Given $\sigma\in (0,1/4)$ and $R>0$, if $w\in \dot{Y}^{1/4+\sigma}_1$ is the solution of uncontrolled system \eqref{Uncontrolled-Problem} with initial data $w_0$ and source term $h$ satisfying
\[\|w_0\|_{\dot{H}^{1/4+\sigma}}<R,\quad \|h\|_{L^2(0,1;\dot{H}^{1/4+\sigma})}<R,\]
then for any $v_0\in \dot{L}^2$ and $f\in L^2(0,1;\dot{H}^{\sigma})$, the solution $v=\mathcal{V}_w(v_0,f)$ of linearized system \eqref{Control-problem} (with $f$ in place of $\chi \mathcal{P}_N\xi$ on RHS) satisfies
\[\|v(t)-S_a(t)v_0\|_{\dot{H}^{\sigma}}\leq C(\sigma,R)\left(
\|v_0\|_{L^2}+\|f\|_{L^2(0,1;\dot{H}^{\sigma})} \right),\quad \forall t\in [0,1].\]
\end{proposition}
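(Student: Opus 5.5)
The plan is to mimic the proof of Proposition~\ref{prop_nonlinearsmoothing}, with a normal form identity for the linearized equation in place of \eqref{Identity-DBP}. Write the linearized equation as $\partial_t v+\partial_x^3v+av+\partial_x(wv)=f$. Pass to the interaction representation $\widehat{v}(t,k)=e^{(ik^3-a)t}\widehat{V}(t,k)$ and similarly for $w$. The term $\partial_x(wv)$ has Fourier multiplier $ik$ with phase $e^{it(k^3-k_1^3-k_2^3)}=e^{3itkk_1k_2}$. Integrating by parts in time (the denominator $3kk_1k_2$ never vanishes for mean-zero modes) should give, exactly as in \cite{ET-16},
\begin{align*}
v(t)-S_a(t)v_0&=2S_a(t)\mathcal{B}(w_0,v_0)-2\mathcal{B}(w(t),v(t))\\
&\quad+\int_0^tS_a(t-r)\bigl[-2a\mathcal{B}(w,v)+2\mathcal{B}(h,v)+2\mathcal{B}(w,f)+\mathcal{R}(w,w,v)+f\bigr]\,dr .
\end{align*}
The term $2\mathcal{B}$ arises from polarizing $\tfrac12\partial_x(u^2)$. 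When $\partial_t$ falls on $w$ or on $v$ and the equations for $w$ and $v$ are substituted, the remainder $\mathcal{R}(w,w,v)$ is a symmetrized combination of $\mathcal{I}$ and $\mathcal{D}$ with $v$ in each of the three slots. This is the identity that will be recorded as \eqref{identity-4}. I would derive it first for smooth data and then pass to the limit using Proposition~\ref{Prop-linear-1} and continuity of $\mathcal{B},\mathcal{I},\mathcal{D}$.

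Next, estimate each term in $\dot{H}^\sigma$ using Lemma~\ref{Lemma-BID} with base regularity $0$ and gain $\sigma$. I also need bounds on $w$ and $v$. By \eqref{a priori-estimate-3} and the $\dot{Y}$ bounds, $\|w\|_{\dot{Y}^{1/4+\sigma}_1}\le C(\sigma,R)$. By Proposition~\ref{Prop-linear-1}, $\|v\|_{\dot{Y}^0_1}\le C(R)(\|v_0\|_{L^2}+\|f\|_{L^2L^2})$, which also controls $\sup_t\|v(t)\|_{L^2}$.

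The bilinear boundary and integral terms are handled by $\|\mathcal{B}(g,v)\|_{\dot{H}^\sigma}\lesssim\|g\|_{L^2}\|v\|_{L^2}$, and the forcing term by Lemma~\ref{lem_Bougainproperty}(2) together with $L^2\dot{H}^\sigma\hookrightarrow \dot{Z}^\sigma$. The resonant pieces $\mathcal{I}(\cdot,\cdot,\cdot)$ are the first delicate point. When $v$ sits in the first slot of $\mathcal{I}$, Lemma~\ref{Lemma-BID}(1) asks for $v\in\dot{H}^\sigma$, which is not available. Instead I would put the $\dot{H}^\sigma$ requirement on a $w$-factor. Looking at the formula, the term $\frac1k\widehat f(k)\widehat g(-k)\widehat h(k)$ gains a full derivative, which is harmless. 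The term $\widehat f(k)\sum_j\frac1j\widehat g(j)\widehat h(-j)$ is the multiplier $f$ times a scalar bounded by $\|g\|_{L^2}\|h\|_{L^2}$. It therefore has exactly the regularity of $f$, so the slot assignment matters. After symmetrization, I expect the coefficient of $v$ in that slot to cancel by reality, in the same way the $j$-sum cancels when $f=g=h$ (footnote to Lemma~\ref{Lemma-DBP}). Otherwise $v$ appears only where $w\in\dot{H}^{1/4+\sigma}$ supplies the regularity. Checking this cancellation is the main algebraic obstacle.

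The nonresonant $\mathcal{D}$ terms carry the main analytic obstacle. Lemma~\ref{Lemma-BID}(2) would give $\|\mathcal{D}\|_{X^{\sigma,-1/2+\varepsilon}}\lesssim\|w\|_{X^{0,1/2}}^2\|v\|_{X^{0,1/2}}$, which would suffice. However, it is stated only with the denominator $k_1$ on the first slot. When $v$ occupies that slot, the gain mechanism shifts, and I expect to need the extra regularity $w\in X^{1/4+\sigma,1/2}$ via a new trilinear/bilinear estimate (Lemma~\ref{Lemma-smoothing-1}). I would prove it by a dyadic case analysis on $\max\langle\tau_i-k_i^3\rangle\gtrsim |kk_1k_2|$ combined with the $L^4$ Strichartz estimate of Lemma~\ref{Lemma-Bourgain}(3), placing the $1/4+\sigma$ derivatives on the high-frequency $w$. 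Then $\|\int_0^tS_a(t-r)\mathcal{D}\,dr\|_{X^{\sigma,1/2+\varepsilon}_1}\hookrightarrow C\dot{H}^\sigma$ by Lemma~\ref{lem_Bougainproperty}(1)(2). Collecting all terms, which are linear in $(v_0,f)$ through the bound on $v$, yields the claim.
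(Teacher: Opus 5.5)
Your proposal is correct, and its overall structure matches the paper's: a normal form for the linearized equation, Lemma~\ref{Lemma-BID} for the standard terms, and the extra regularity of $w$ for one new term. The difference is in how the contribution of $\partial_t$ falling on $w$ is handled.

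Your identity is exactly the derivative of \eqref{Identity-DBP} along $u=w+\epsilon v$ (with forcing $h+\epsilon f$). Its remainder is $\mathcal{R}=\mathcal{I}(v,w,w)+\mathcal{I}(w,v,w)+\mathcal{I}(w,w,v)+\mathcal{D}(v,w,w)+2\mathcal{D}(w,w,v)$. The cancellation you anticipate does hold, since $\widehat{v}(k)\sum_{|j|\neq|k|}\frac{1}{j}|\widehat{w}(j)|^2=0$. So the only genuinely new term is $\mathcal{D}(v,w,w)$.

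The paper never produces this term. In \eqref{identity-4} it keeps $2\mathcal{B}\bigl(S_a(r)\partial_r(S_a(-r)w),v\bigr)=2\mathcal{B}\bigl(-\tfrac12\partial_x(w^2)+h,v\bigr)$ unexpanded. It then uses $\mathcal{B}(\partial_x(w^2),v)=\tfrac16\bigl(w^2-\widehat{w^2}(0)\bigr)\partial_x^{-1}v$, which is bounded in $\dot{H}^\sigma$ pointwise in time via $\|w^2\|_{H^\sigma}\lesssim\|w\|_{H^{1/4+\sigma}}^2$ (Lemma~\ref{Lemma-smoothing-1}). In fact $-\mathcal{B}(\partial_x(w^2),v)$ equals $\mathcal{D}(v,w,w)$ plus the resonant pieces in which the frequency of $v$ cancels that of one $w$ factor. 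So the two identities coincide; the paper's grouping just avoids the resonance bookkeeping.

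On your route, the bound for $\mathcal{D}(v,w,w)$ needs no modulation case analysis. (The relevant resonance function for this trilinear term would in any case be $(k_1+k_2)(k_2+k_3)(k_1+k_3)$, not $kk_1k_2$.) After duality, with $k_1$ the frequency of $v$, the multiplier $\langle k\rangle^{\sigma}|k_1|^{-1}\langle k_2\rangle^{-1/4-\sigma}\langle k_3\rangle^{-1/4-\sigma}$ is bounded. H\"older with four $L^4$ norms and Lemma~\ref{Lemma-Bourgain}(3) then give $\|\mathcal{D}(v,w,w)\|_{X^{\sigma,-1/2+\varepsilon}}\lesssim\|v\|_{X^{0,1/2}}\|w\|_{X^{1/4+\sigma,1/2}}^2$.

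Your version can get the identity for free by differentiating \eqref{Identity-DBP} along the $C^1$ solution map. The cost is a resonance check and a trilinear estimate that the paper's grouping reduces to a short Sobolev product bound.
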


An immediate corollary is the high frequency dissipation, since $\|Id-P_m\|_{\mathcal L(\dot{H}^{\sigma}, \dot{L}^2)}\le m^{-\sigma}$.

\begin{corollary}\label{Coro-HFdissipation}
    Under the assumptions of Proposition~{\rm\ref{Prop-linearized_nonlinearsmooothing}}, for any $m\in \N^+$,
    \[\|(Id-P_m) [v(1)-S_a(1) v_0]\|_{\dot{L}^2}\le C(\sigma,R)m^{-\sigma} (\|v_0\|_{L^2}+\|f\|_{L^2(0,1;\dot{H}^{\sigma})}).\]
\end{corollary}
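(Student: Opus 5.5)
The corollary follows from Proposition~\ref{Prop-linearized_nonlinearsmooothing} together with an elementary Fourier-multiplier bound for $Id-P_m$. So the plan is first to justify the operator bound
\[\|Id-P_m\|_{\mathcal L(\dot{H}^{\sigma},\dot{L}^2)}\le m^{-\sigma},\]
and then to apply it to the function $g:=v(1)-S_a(1)v_0$.

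For the operator bound, take any mean-zero $g\in\dot{H}^\sigma$. By definition \eqref{def_Hm}, $P_m$ keeps exactly the Fourier modes with $1\le|k|\le m$, so $(Id-P_m)g$ consists of the modes with $|k|\ge m+1$. By Parseval,
\[\|(Id-P_m)g\|_{\dot{L}^2}^2=\sum_{|k|>m}|\widehat g(k)|^2\le \sup_{|k|>m}\langle k\rangle^{-2\sigma}\sum_{|k|>m}\langle k\rangle^{2\sigma}|\widehat g(k)|^2\le m^{-2\sigma}\|g\|_{\dot{H}^\sigma}^2.\]
The last inequality uses $\langle k\rangle\ge |k|\ge m$ and $\sigma>0$. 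This holds with the inhomogeneous weight fixed in Section~\ref{Section-notation}, and equally with the homogeneous weight $|k|^{2\sigma}$.

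Next, under the hypotheses of the corollary (which are those of Proposition~\ref{Prop-linearized_nonlinearsmooothing}), that proposition at $t=1$ gives $g\in\dot{H}^\sigma$ with
\[\|g\|_{\dot{H}^\sigma}\le C(\sigma,R)\bigl(\|v_0\|_{L^2}+\|f\|_{L^2(0,1;\dot{H}^\sigma)}\bigr).\]
Combining this with the operator bound yields the stated estimate, and the constant $C(\sigma,R)$ is inherited unchanged.

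There is no genuine obstacle here: all the analytic difficulty sits in Proposition~\ref{Prop-linearized_nonlinearsmooothing}, which may be assumed. The only point needing care is the multiplier computation above: $P_m$ has to be read as the Fourier truncation to $1\le|k|\le m$, so that the discarded modes satisfy $|k|\ge m+1$ and the factor $m^{-\sigma}$ appears with constant $1$.
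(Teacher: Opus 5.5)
Your proposal is correct and follows the paper's own one-line argument: evaluate Proposition~\ref{Prop-linearized_nonlinearsmooothing} at $t=1$ and apply the multiplier bound $\|Id-P_m\|_{\mathcal L(\dot{H}^{\sigma},\dot{L}^2)}\le m^{-\sigma}$. Your Parseval verification of that bound, using that the discarded modes satisfy $|k|\ge m+1$, is also correct.
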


Before proving this proposition, we present a technical lemma, indicating a smoothing effect for the bilinear operator $\mathcal{B}$ (defined in \eqref{Multilinear-operator}) supplementary to Lemma~\ref{Lemma-BID}.

\begin{lemma}\label{Lemma-smoothing-1}
Let $\sigma\in (0,1/4)$ be arbitrarily given. Then there exists a constant $C>0$ such that
\[\|\mathcal{B}(\partial_x(w^2), v)\|_{\dot{H}^{\sigma}}\le C\|w\|_{\dot{H}^{1/4+\sigma}}^2\|v\|_{\dot{L}^2}\]
for any $w\in\dot{H}^{1/4+\sigma}$ and $v\in \dot{L}^2$.
\end{lemma}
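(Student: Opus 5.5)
The plan is to use the explicit Fourier definition of $\mathcal{B}$ in \eqref{Multilinear-operator} to see that the derivative on the first argument cancels one of the two small denominators. This reduces the estimate to a plain Sobolev product estimate of the form \eqref{productSobolev}.

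\emph{Step 1 (algebraic reduction).} Set $F:=\partial_x(w^2)$. Then $\widehat{F}(k_1)=ik_1\widehat{w^2}(k_1)$, and $\widehat F(0)=0$, so $F$ is mean zero and $\mathcal{B}(F,v)$ is well defined. Substituting into \eqref{Multilinear-operator}, for $k\neq 0$,
\[
\widehat{\mathcal{B}(F,v)}(k)=-\frac{i}{6}\sum_{\substack{k_1+k_2=k\\ k_1,k_2\neq 0}}\widehat{w^2}(k_1)\,\frac{\widehat v(k_2)}{k_2}.
\]
Introduce $g:=w^2-\widehat{w^2}(0)$, the mean-zero part of $w^2$, and $V:=\partial_x^{-1}v$, the mean-zero antiderivative with $\widehat V(k)=\widehat v(k)/(ik)$. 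With this notation the identity reads
\[
\mathcal{B}(\partial_x(w^2),v)=\tfrac16\,\bigl(gV-\widehat{gV}(0)\bigr).
\]
Removing the mean only decreases the $\dot H^\sigma$ norm, so it suffices to bound $\|gV\|_{H^\sigma}$. We also have $\|V\|_{H^1}\lesssim\|v\|_{\dot L^2}$.

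\emph{Step 2 (regularity of $w^2$).} Fix $\epsilon\in(0,\sigma)$ and apply \eqref{productSobolev} with $s_1=s_2=1/4+\sigma$ and $s=\sigma+\epsilon$. The hypotheses hold, since
\[
\sigma+\epsilon<\min\{1/4+\sigma,\;2\sigma\}=\min\{s_1,\,s_1+s_2-1/2\}.
\]
Here we use $\sigma<1/4$ and $\epsilon<\sigma$. This yields $\|g\|_{H^{\sigma+\epsilon}}\le\|w^2\|_{H^{\sigma+\epsilon}}\lesssim\|w\|_{\dot H^{1/4+\sigma}}^2$.

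\emph{Step 3 (final product).} Apply \eqref{productSobolev} once more, now with $s_1=\sigma+\epsilon$, $s_2=1$ and $s=\sigma$. The condition $\sigma<\min\{\sigma+\epsilon,\,1,\,\sigma+\epsilon+1/2\}$ holds. Hence
\[
\|gV\|_{H^\sigma}\lesssim\|g\|_{H^{\sigma+\epsilon}}\|V\|_{H^1}\lesssim\|w\|_{\dot H^{1/4+\sigma}}^2\|v\|_{\dot L^2}.
\]
This is the claim.

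The only delicate point is the endpoint bookkeeping in \eqref{productSobolev}, which requires strict inequalities. The small loss $\epsilon$ from Step 2 is kept precisely so that Step 3 is not an endpoint case. This is possible because $w^2$ actually lies in $H^{2\sigma-}$, which is strictly better than the $H^\sigma$ needed. Alternatively, one can avoid Step 2's extra room by noting that multiplication by $V\in H^1$ is bounded on $L^2$ (since $H^1\subset L^\infty$) and on $H^1$ (since $H^1$ is an algebra). Interpolation then gives boundedness on $H^\sigma$.
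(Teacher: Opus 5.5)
Your proof is correct and follows the paper's own argument. The paper also writes $\mathcal{B}(\partial_x(w^2),v)=\frac16\bigl(w^2-\widehat{w^2}(0)\bigr)\partial_x^{-1}v$; you are slightly more careful, since you also remove the zero mode of the product. It then bounds $w^2$ in $H^\sigma$ via \eqref{productSobolev} and finishes with the boundedness of multiplication by $H^1$ functions on $H^\sigma$, which is exactly your interpolation alternative to the $\epsilon$-room you keep in Step 3.
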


\begin{proof}[{\bf Proof of Lemma~\ref{Lemma-smoothing-1}}]
Note that if $\widehat{f}(0)=\widehat{g}(0)=0$, then
\[\mathcal{B}(f,g)=\frac{1}{6}\cdot \partial_x^{-1} f\cdot \partial_x^{-1} g.\]
Thus
\[\mathcal{B}(\partial_x (w^2),v)=\mathcal{B}(\partial_x (w^2-\widehat{w^2}(0)),v)=\frac{1}{6}\cdot (w^2-\widehat{w^2}(0))\cdot \partial_x^{-1} v.\]
Since multiplication by $H^1$ function is a bounded operator on $H^{\sigma}$, we have
\[\|\mathcal{B}(\partial_x (w^2),v)\|_{H^{\sigma}}\le C\|w^2-\widehat{w^2}(0)\|_{H^{\sigma}}\|\partial_x^{-1} v\|_{H^1}\le C\left(\|w^2\|_{\dot{H}^{\sigma}}+|\widehat{w^2}(0)|\right)\|v\|_{\dot{L}^2}.\]
Clearly $|\widehat{w^2}(0)|=\|w\|_{\dot{L}^2}^2$. 
In the product estimate \eqref{productSobolev}, let us take $g=h=w$, $s_1=s_2=1/4+\sigma$ and $s=\sigma$. This yields $\|w^2\|_{\dot{H}^{\sigma}}\lesssim \|w\|_{\dot{H}^{1/4+\sigma}}^2$. Now the proof is complete.
\end{proof}

\begin{proof}[{\bf Proof of Proposition~\ref{Prop-linearized_nonlinearsmooothing}}]
We first point out that $\|w\|_{\dot{Y}^{1/4+\sigma}_1}\leq C(R)$ by Proposition~\ref{Prop-uncontrolled}, and therefore, by Proposition~\ref{Prop-linear-1},
\[\|v\|_{\dot{Y}^0_1}\le C(R)\left(\|v_0\|_{\dot{L}^2}+\|f\|_{L^2(0,1;\dot{L}^2)}\right).\]

Recall the multi-linear operators $\mathcal{B},\mathcal I,\mathcal D$ defined by \eqref{Multilinear-operator}.
Then, making use of the normal form transformation (cf.~Lemma~\ref{Lemma-DBP}), we can rewrite the function $v$ in the form
\begin{equation}\label{identity-4}
\begin{aligned}
v(t)&=S_a(t) v_0+2S_a(t)\mathcal{B}(w(0),v_0)-2\mathcal{B}(w(t),v(t))\\
& \quad +\int_0^tS_a(t-r)\left[-2a\mathcal{B}(w,v)+2\mathcal{B}(w,f)+2\mathcal I(w,w,v)+2 \mathcal{D}(w,w,v)+f(r)\right]dr\\
&\quad +2\int_0^tS_a(t-r)\mathcal{B}(S_a(r)\partial_r(S_a(-r)w),v)dr.
\end{aligned}
\end{equation}
The verification of formula \eqref{identity-4} will be provided at the end of this subsection.

\vspace{2mm}

We proceed to deal with the last integral in the RHS of \eqref{identity-4}, while the others can be addressed by Lemma~\ref{Lemma-BID}. Since (note that $r$ represents the time variable)
\[S_a(r)\partial_r(S_a(-r)w)=\partial_r w+\partial_x^3 w+aw=-\frac{1}{2}\partial_x(w^2)+h,\]
applying Lemma~\ref{Lemma-BID}(1) and Lemma~\ref{Lemma-smoothing-1} gives us
\begin{equation}\label{eq-hf-1}
\begin{aligned}
&\|\mathcal{B}(S_a(r)\partial_r (S_a(-r)w),v)\|_{\dot{H}^{\sigma}}\lesssim \|\mathcal{B}(\partial_x (w^2), v)\|_{\dot{H}^{\sigma}}+\|\mathcal{B} (h,v)\|_{\dot{H}^{\sigma}}\\
\lesssim &\left(\|w\|_{\dot{H}^{1/4+\sigma}}^2+\|h\|_{\dot{L}^2}\right)\|v\|_{\dot{L}^2}\lesssim (\|w\|_{\dot{Y}^{1/4+\sigma}_1}^2+\|h\|_{\dot{L}^2})\|v\|_{\dot{Y}^0_1}.
\end{aligned}
\end{equation}

Finally, applying Lemma~\ref{lem_Bougainproperty}(1)(2), Lemma~\ref{Lemma-BID} and \eqref{eq-hf-1} to \eqref{identity-4}, we obtain that for any $t\in [0,1]$, (cf.~the proof of Proposition~\ref{prop_nonlinearsmoothing})
\begin{align*}
\|v(t)-S_a(t) v_0\|_{\dot{H}^{\sigma}}&\lesssim e^{-at}\|w(0)\|_{\dot{L}^2} \|v_0\|_{\dot{L}^2}+\|w(t)\|_{\dot{L}^2}\|v(t)\|_{\dot{L}^2}\\
&\quad +\int_0^t e^{-a(t-r)} \left(\|w\|_{\dot{L}^2}\|v\|_{\dot{L}^2}+\|w\|_{\dot{L}^2}\|f\|_{\dot{L}^2}+\|w\|_{\dot{H}^{\sigma}}\|w\|_{\dot{L}^2}\|v\|_{\dot{L}^2}\right) dr\\
&\quad +\left\|\int_0^t S_a(t-r) \mathcal{D}(w,w,v) dr\right\|_{\dot{X}^{\sigma,b}_1}+\left\|\int_0^t S_a(t-r) f(r) dr\right\|_{\dot{Y}^{\sigma}}\\
&\quad +\int_0^t e^{-a(t-r)} \left(\|w\|_{\dot{Y}^{1/4+\sigma}_1}^2+\|h\|_{\dot{L}^2}\right)\|v\|_{\dot{Y}^1_0}dr\\
&\lesssim \|w\|_{\dot{Y}^0_1}\|v\|_{\dot{Y}^0_1}+\|w\|_{\dot{Y}^0_1}\|f\|_{L^2(0,1;\dot{L}^2)}+\|w\|_{\dot{Y}^{\sigma}_1} \|w\|_{\dot{Y}^0_1} \|v\|_{\dot{Y}^0_1}\\
&\quad +\|\mathscr{D}(w,w,v)\|_{\dot{X}^{\sigma,b-1}_1}+\|f\|_{L^2(0,1;\dot{H}^{\sigma})}\\
&\quad +\left(\|w\|_{\dot{Y}^{1/4+\sigma}_1}^2+\|h\|_{L^2(0,1;\dot{L}^2)}\right)\|v\|_{\dot{Y}^0_1}\\
&\lesssim C(R)\left(\|v_0\|_{\dot{L}^2}+\|f\|_{L^2(0,1;\dot{H}^{\sigma})}\right)+\|w\|_{\dot{X}^{0,1/2}_1}^2 \|v\|_{\dot{X}^{0,1/2}_1}\\
&\lesssim C(R)\left(\|v_0\|_{\dot{L}^2}+\|f\|_{L^2(0,1;\dot{H}^{\sigma})}\right).
\end{align*}
Now the proof is complete.
\end{proof}

Note that the source term $f$ is assumed to be smoother than $L^2$ for nonlinear smoothing to be valid. However, the low-frequency controllability from the previous subsection merely ensures $\xi\in L^2(t_1,t_2;\dot{L}^2(s_1,s_2))$ when $v_0\in \dot{L}^2$\footnote{Even though the projection $\mathcal{P}_N$ implies $\chi \mathcal{P}_N \xi\in L^2(0,1;H^{1/2-\varepsilon})$, the problem is that its norm in this smoother space cannot be control by the $L^2(t_1,t_2;\dot{L}^2(s_1,s_2))$ norm of $\xi$ uniformly in $N$. In the sequel, it is essential that we have the estimate $\|\chi \mathcal{P}_N \xi\|_{L^2(0,1;\dot{H}^s)}\le C \|\xi\|_{L^2(t_1,t_2;\dot{H}^s(s_1,s_2))}$ with the constant $C$ independent of $N$.}. To improve the regularity of $\xi$, we invoke a trick from \cite{LWX-24,CXZZ-25}, at the cost of steering the low frequencies of $v(1)$ to the linear evolution $S_a(1)v_0$ (instead of $0$), which still possesses a decay rate $e^{-a}$.

\begin{lemma}\label{lem-LFtoS(t)v0}
Given $\sigma\in (0,1/4)$, $R>0$ and $m\in \N^+$, there exists a constant $N\in \N^+$ such that the following assertions hold.
\begin{enumerate}
\item[$(1)$] Let $w\in \dot{Y}^{1/4+\sigma}_1$, $w_0\in \dot{H}^{1/4+\sigma}$ and $h\in L^2(0,1;\dot{H}^{1/4+\sigma})$ satisfy the assumption in Theorem~{\rm\ref{Theorem-control}(1)}. Then for any $v_0\in \dot{L}^2(\T)$, there exists a control $\xi$ such that
\begin{equation}\label{eq_controlbound-LFtoS(t)v0}
\int_{t_1}^{t_2} \|\xi(t,\cdot)\|_{\dot{H}^{\sigma}(s_1,s_2)}^2 dt\le C(\sigma,R)\|v_0\|_{\dot{L}^2}^2,
\end{equation}
and the solution $v\in \dot{Y}^s_1$ of linearized system \eqref{Control-problem} satisfies
\begin{equation}\label{Squeezing-LFtoS(t)v0}
P_m [v(1)-S_a(1) v_0]=0.
\end{equation}

\item[$(2)$] The control $\xi$ in {\rm(1)} can be specified as in Theorem~{\rm\ref{Theorem-control}(2)}.
\end{enumerate}
\end{lemma}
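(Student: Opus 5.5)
The plan is to exploit linearity of $v\mapsto \mathcal V_w(v_0,\chi\mathcal P_N\xi)$ and to split the solution into the free damped evolution plus a correction. Write
\[
v=\mathcal V_w(v_0,\chi\mathcal P_N\xi)=S_a(t)v_0+z,
\]
so that $z$ solves
\[
\partial_t z+\partial_x^3z+az+\partial_x(wz)=-\partial_x(wS_a(t)v_0)+\chi\mathcal P_N\xi,\qquad z(0)=0 .
\]
The condition \eqref{Squeezing-LFtoS(t)v0} is then equivalent to $P_m z(1)=0$. Next split $z=z_1+z_2$, where $z_1:=\mathcal V_w(0,-\partial_x(wS_a(t)v_0))$ is uncontrolled and $z_2:=\mathcal V_w(z_1(\tau),\chi\mathcal P_N\xi)$ is restarted at a suitable time. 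To avoid restarting, I will instead use $v-S_a(t)v_0=\mathcal V_w(0,\cdot)$ and observe that $v(1)-S_a(1)v_0=\big(\mathcal V_w(v_0,0)(1)-S_a(1)v_0\big)+\mathcal V_w(0,\chi\mathcal P_N\xi)(1)$.

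The key input is Proposition~\ref{Prop-linearized_nonlinearsmooothing} with $f=0$. It gives
\[
g:=\mathcal V_w(v_0,0)(1)-S_a(1)v_0\in\dot H^\sigma,\qquad \|g\|_{\dot H^\sigma}\le C(\sigma,R)\|v_0\|_{\dot L^2}.
\]
It therefore suffices to find $\xi$ with $P_m\mathcal V_w(0,\chi\mathcal P_N\xi)(1)=-P_mg$ and $\|\xi\|_{L^2(t_1,t_2;\dot H^\sigma(s_1,s_2))}\le C\|g\|_{\dot H^\sigma}$. This is a controllability problem in $\dot H^\sigma$ with $\sigma<1/4+\sigma$, so Proposition~\ref{Prop-LFcontrol} applies with $s=\sigma$, provided I recast it as steering from an initial state. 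To do so, run the free linearized flow backward: by reversibility (well-posedness of \eqref{Control-problem} backward in time in $\dot H^\sigma$, Proposition~\ref{Prop-linear-1}), choose $y_0\in\dot H^\sigma$ with $\mathcal V_w(y_0,0)(1)=g$ and $\|y_0\|_{\dot H^\sigma}\le C(R)\|g\|_{\dot H^\sigma}$. Then apply Proposition~\ref{Prop-LFcontrol}(1) to $y_0$ to get $\xi=\Lambda(w)(y_0)$ with $P_m\mathcal V_w(y_0,\chi\mathcal P_N\xi)(1)=0$. By linearity,
\[
P_m\mathcal V_w(0,\chi\mathcal P_N\xi)(1)=-P_m\mathcal V_w(y_0,0)(1)=-P_mg,
\]
and the bound \eqref{eq_controlbound-LFtoS(t)v0} follows by chaining the three estimates. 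Here $N$ is the one given by Proposition~\ref{Prop-LFcontrol} for $s=\sigma$.

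The main obstacle is the backward solvability. If it is not available, I would replace it with a direct HUM argument: the map $\xi\mapsto P_m\mathcal V_w(0,\chi\mathcal P_N\xi)(1)$ is onto $\dot H_m$ with a right inverse bounded from $\dot H^{-\sigma}$-dual data. This follows from the truncated observability, Lemma~\ref{Lemma-truncatedobs}, with $s=\sigma$, through the duality identity \eqref{Dual-identity}. Because $P_mg\in\dot H_m$ and $\|P_mg\|_{\dot H^\sigma}\le\|g\|_{\dot H^\sigma}$, the constant stays independent of $m$ in the norms used. I would take this HUM route as the primary argument, since it needs only Lemma~\ref{Lemma-truncatedobs} and Appendix~\ref{Section-HUM}.

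For part (2), the control is
\[
\xi=\Lambda'(w)\big(P_m(\mathcal V_w(v_0,0)(1)-S_a(1)v_0)\big),
\]
where $\Lambda'$ is the HUM right inverse. This is linear in $v_0$. Writing $w=w(w_0,h)$, its Lipschitz and $C^1$ dependence on $(w_0,h)$ follows from that of the solution map (Proposition~\ref{Prop-uncontrolled}), of $w\mapsto\mathcal V_w$ (Proposition~\ref{Prop-linear-1}), and of the Gram-operator inverse in HUM.
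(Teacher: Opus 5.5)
Your proposal is correct. The route you describe first is exactly the paper's proof. The paper takes $v^1=\mathcal V_w(v_0,0)$, solves the free linearized equation backward from $v^2(1)=-g$, and null-controls $v^3$ from $-v^2(0)$ by Proposition~\ref{Prop-LFcontrol} with $s=\sigma$. So your $y_0$ is the paper's $-v^2(0)$.

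The backward solvability you flag as the main obstacle is not one. The change of variables $(t,x)\mapsto(1-t,-x)$ preserves $\langle k\rangle$ and $\langle\tau-k^3\rangle$, hence the $\dot Y^{1/4+\sigma}_1$ norm of $w$. It turns the backward problem into a forward problem of the form \eqref{Problem-linear-1} with $a$ replaced by $-a$, which is harmless on $[0,1]$; this is how the paper justifies it.

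Your preferred HUM route is a legitimate repackaging rather than a new mechanism. Minimize $\frac12\int_{t_1}^{t_2}\|\mathcal P_NG\varphi\|^2_{\dot H^{-\sigma}(s_1,s_2)}dt+(g,\varphi_1)_{\dot L^2}$ over $\varphi_1\in\dot H_m$ and take $\xi=(-\Delta_{loc})^{-\sigma}\mathcal P_NG\breve\varphi$. This yields $P_m\mathcal V_w(0,\chi\mathcal P_N\xi)(1)=-P_mg$ with $\|\xi\|_{L^2(t_1,t_2;\dot H^\sigma(s_1,s_2))}\le C_0^{-1/2}\|P_mg\|_{\dot H^\sigma}$. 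The bound is uniform in $m$ because $C_0$ in Lemma~\ref{Lemma-truncatedobs} does not depend on $m$.

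In fact the two routes give the same control. Since $\mathcal V_w(y_0,0)(1)=g$, the identity \eqref{Dual-identity} with $\xi=0$ gives $(y_0,\mathcal U_w(\varphi_1)(0))_{\dot L^2}=(g,\varphi_1)_{\dot L^2}$. So your functional coincides with \eqref{Functional-J} for the initial datum $y_0$.

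What your packaging buys is twofold. It never uses the backward flow. It also makes part (2) more transparent: $v_0$ enters only through the finite-rank map $v_0\mapsto P_mg$, whose Lipschitz and $C^1$ dependence on $w$ follows from Proposition~\ref{Prop-linear-1} at the $L^2$ level. The paper instead composes $\Lambda(w)$ with $v_0\mapsto -v^2(0)\in\dot H^\sigma$ and leaves the $w$-dependence of the latter at the $\dot H^\sigma$ level implicit.

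One small correction: with $\Lambda'$ the right inverse, the control must be $\Lambda'(w)(-P_mg)$, not $\Lambda'(w)(P_mg)$.
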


\begin{proof}[\bf Proof of Lemma~\ref{lem-LFtoS(t)v0}]
Note that $\|w\|_{\dot{Y}^{1/4+\sigma}_1}\le C(R)$. We split the solution $v$ into three components. First, consider the uncontrolled system
\[\begin{cases}
v^1_t+\partial_x^3v^1+av^1+\partial_x(wv^1)=0,\\
v^1(0,\cdot)=v_0.
\end{cases}\]
According to Proposition~\ref{Prop-linearized_nonlinearsmooothing}, we have
\[\|v^1(1)-S_a(1)v_0\|_{\dot{H}^{\sigma}}\le C(R) \|v_0\|_{\dot{L}^2}.\]
Next, consider the backward system
\[\begin{cases}
v^2_t+\partial_x^{3}v^2+av^2+\partial_x (wv^2)=0,\\
v^2(1,\cdot)=-[v^1 (1)-S_a(1)v_0]\in \dot{H}^{\sigma}.
\end{cases}\]
The well-posedness is similar to the forward system (see Proposition~\ref{Prop-linear-1}) up to a change of variables $(t,x)\mapsto (1-t,-x)$. In particular, we have $v^2\in \dot{Y}^{\sigma}_T$ and the a priori estimate
\[\|v^2(0)\|_{\dot{H}^{\sigma}}\le C(R) \|v^2(1)\|_{\dot{H}^{\sigma}}\le C(R) \|v_0\|_{\dot{L}^2}.\]
And finally, let $v^3$ solve the controlled system
\[\begin{cases}
v^3_t+\partial_x^3v^3+av^3+\partial_x (wv^3)=\chi \mathcal{P}_N \xi,\\
v^3(0,\cdot)=-v^2(0,\cdot)\in \dot{H}^{\sigma}.
\end{cases}\]
According to Proposition~\ref{Prop-LFcontrol}, we can find $\xi\in L^2(t_1,t_2;\dot{H}^{\sigma}(s_1,s_2))$ satisfying \eqref{eq_controlbound-LFtoS(t)v0}, so that
\[P_m v^3(1)=0.\]
Now we set $v=v^1+v^2+v^3$, which coincides with $\mathcal{V}_w (v_0,\chi\mathcal{P}_N \xi)$. Then \eqref{Squeezing-LFtoS(t)v0} is valid since
\[P_m v(1)=P_m(S_a(1)v_0+v^3(1))=P_m S_a(1) v_0.\]
    
Finally, the assertion (2) on the structure of control is easy, thanks to Proposition~\ref{Prop-LFcontrol}(2), the fact that $\dot{H}^{\sigma}(s_1,s_2)\hookrightarrow \dot{L}^2(s_1,s_2)$ and $\dot{L}^2\ni v_0\mapsto -v^2(0,\cdot)\in \dot{H}^{\sigma}$ are bounded linear operators, and the locally Lipschitz and $C^1$ dependence on $w_0$ and $h$ of the solution maps (see Proposition~\ref{Prop-uncontrolled} and Proposition~\ref{Prop-linear-1}).
\end{proof}

We conclude this subsection with the derivation of normal form transformation \eqref{identity-4} for linearized KdV, which is analogous to \cite[Proposition~4.4]{ET-16}.

\begin{proof}[\bf Proof of identity \eqref{identity-4}]
    The Fourier coefficients of $v(t)$ satisfies
    \begin{equation*}
        \partial_t \widehat v(k)=(ik^3-a) \widehat v(k)-ik\sum_{k_1+k_2=k}\widehat{w}(k_1) \widehat{v}(k_2)+\widehat{f}(k).
    \end{equation*}
    We keep the convention that all the frequencies $k_j$ and $k$ are non-zero. Let us set 
\begin{equation}\label{Transformation}
W(t)=S_a(-t)w(t),\quad V(t)=S_a(-t)v(t),\quad F(t)=S_a(-t)f(t).
\end{equation}
Owing to the identity $k_1^3+k_2^3-(k_1+k_2)^3=-3k_1 k_2(k_1+k_2)=-3kk_1 k_2$, it follows that
\begin{equation}\label{identity-0}
\begin{aligned}
\partial_t\widehat{V}(k)&=-ike^{-at}\sum_{k_1+k_2=k}e^{-3ikk_1k_2t}\widehat{W}(k_1) \widehat{V}(k_2)+\widehat F(k)\\
&=\frac{1}{3}e^{-at}\sum_{k_1+k_2=k}\partial_t(e^{-3ikk_1k_2t})\frac{\widehat{W}(k_1) \widehat{V}(k_2)}{k_1k_2}+\widehat F(k),
\end{aligned}
\end{equation}
We introduce an auxiliary mean-zero bilinear operator $\mathcal{B}_1$, whose Fourier coefficients are
\begin{equation*}
\widehat{\mathcal{B}_1(f,g)}(k):=-\frac{1}{3}\sum_{k_1+k_2=k}e^{-3ikk_1k_2t}\frac{\widehat{f}(k_1)\widehat{g}(k_2)}{k_1k_2}.
\end{equation*}
Then \eqref{identity-0} implies
\begin{equation}\label{identity-1}
\begin{aligned}
\partial_t[\widehat{V}(k)+e^{-at}\widehat{\mathcal{B}_1(W,V)}(k)]
&=e^{-at}[-a\widehat{\mathcal{B}_1(W,V)}(k)\\
&\quad +\widehat{\mathcal{B}_1(\partial_tW,V)}(k)+\widehat{\mathcal{B}_1(W,\partial_t V)}(k)]+\widehat F(k)
\end{aligned}
\end{equation}
According to the first line of \eqref{identity-0}, the term $\widehat{\mathcal{B}_1(W,\partial_t V)}(k)$ can be represented as
\begin{equation*}
\begin{aligned}
\widehat{\mathcal{B}_1(W,\partial_t V)}(k)&=\frac{i}{3}e^{-at}\sum_{\substack{k_1+k_2+k_3=k\\
k_2+k_3\not =0}}e^{-3i[kk_1(k_2+k_3)+k_2k_3(k_2+k_3)]t}\frac{\widehat{W}(k_1)\widehat{W}(k_2)\widehat{V}(k_3)}{k_1}+\widehat{\mathcal{B}_1(W,F)}(k)\\
&=\frac{i}{3}e^{-at}\sum_{\substack{k_1+k_2+k_3=k\\
k_2+k_3\not =0}}e^{-3i(k_1+k_2)(k_2+k_3)(k_1+k_3)t}\frac{\widehat{W}(k_1)\widehat{W}(k_2)\widehat{V}(k_3)}{k_1}+\widehat{\mathcal{B}_1(W,F)}(k).
\end{aligned}
\end{equation*}
We proceed to deal with the frequencies satisfying $(k_1+k_2)(k_2+k_3)(k_1+k_3)=0$. When $k_1+k_2=0$ and $k_1+k_3\neq 0$, the corresponding summand is
\[\widehat{V}(k)\sum_{|j|\neq |k|}\frac{1}{j}|\widehat{W}(j)|^2=0.\]
When $k_1+k_2=0$ and $k_1+k_3=0$, the corresponding summand is
\[-\frac{1}{k}\widehat{W}(-k)\widehat{W}(k)\widehat{V}(k).\]
When $k_1+k_2\neq 0$ and $k_1+k_3=0$, the corresponding summand is
\[\widehat{W}(k)\sum_{|j|\neq |k|}\frac{1}{j}\widehat{W}(j)\widehat{V}(-j).\]
In conclusion, it follows that
\begin{equation}\label{identity-2}
\widehat{\mathcal{B}_1(W,\partial_t V)}(k)=e^{-at}\left[2\widehat{\mathcal I(W,W,V)}(k)+\widehat{\mathcal D_1(W,W,V)}(k)\right]+\widehat{\mathcal{B}_1(W,F)}(k).
\end{equation}
where $\mathcal{D}_1$ is a mean-zero trilinear operator defined by
\[\widehat{\mathcal D_1(f,g,h)}(k)=\frac{i}{3}\sum_{\substack{k_1+k_2+k_3=k\\(k_1+k_2)(k_2+k_3)(k_1+k_3)\not =0}}e^{-3i(k_1+k_2)(k_2+k_3)(k_1+k_3)t}\frac{\widehat{f}(k_1)\widehat{g}(k_2)\widehat{h}(k_3)}{k_1}.\]

Now, substituting \eqref{identity-2} into \eqref{identity-1} and comparing the Fourier coefficients, we get
\begin{equation*}
\begin{aligned}
\partial_t(U+e^{-at}\mathcal{B}_1(W,V))&=e^{-at}[-a\mathcal{B}_1(W,V)+\mathcal{B}_1(\partial_t W,V)+\mathcal{B}_1(W,F)] \\
&\quad +e^{-2at}\left[2\mathcal I(W,W,V)+\mathcal D_1(W,W,V)\right]+F.
\end{aligned}
\end{equation*}
Integrate this identity on $[0,t]$. By comparing the Fourier coefficients, it is easy to see
\begin{align*}
&\mathcal{B}_1(S_a(-t)f,S_a(-t)g)=2e^{at} S_a(-t)\mathcal{B}(f,g),\\
&\mathcal I(S_a(-t)f,S_a(-t)g,S_a(-t)h)=e^{2at} S_a(-t)\mathcal{I}(f,g,h),\\
&\mathcal D_1(S_a(-t)f,S_a(-t)g,S_a(-t)h)=2e^{2at} S_a(-t) \mathcal{D}(f,g,h).
\end{align*}
Then one readily obtains the desired conclusion \eqref{identity-4}.
\end{proof}

\settocdepth{section}

\subsection{Proof of stabilization}\label{Section-proofcontrol}

We accomplish the proof of Theorem~\ref{Theorem-control}, by adapting the ideas from prior works \cite{LWX-24,CXZZ-25} to the KdV setting.

\begin{proof}[\bf Proof of Theorem~\ref{Theorem-control}]

Let $v$ denote the solution of linearized system \eqref{Control-problem} around $w$, with initial data $v_0=u_0-w_0$. In view of Lemma~\ref{lem-LFtoS(t)v0}, for every $m\in\N^+$, there exists a constant $N=N(m)\in\N^+$ and a control $\xi$ (which automatically satisfies the assertion (2)) such that
\[P_m [v(1)-S_a(1)v_0]=0,\quad \int_{t_1}^{t_2}\|\xi(t)\|^2_{\dot{H}^{\sigma}(s_1,s_2)}dt\le C(\sigma,R)\|v_0\|_{\dot{L}^2}^2.\]
We emphasize that the constant $C(q,\sigma,R)$ do not depend on $m,N$. Meanwhile, note that the operator norm of $\chi \mathcal{P}_N$ from $L^2(0,1;\dot{H}^s)$ into itself has a uniform bound (independent of $N$).\footnote{Recall $\beta_k$ are Laplacian eigenfunctions, and they form an orthogonal basis of $H^s(s_1,s_2)$ for $s\in (-1/2,1/2)$.} Therefore, Corollary~\ref{Coro-HFdissipation} implies that
\begin{align*}
&\|(Id-P_m) [v(1)-S_a(1)v_0]\|_{\dot{L}^2}\le C(R)\langle m\rangle^{-\sigma}\left(
\|v_0\|_{\dot{L}^2}+\|\chi\mathcal{P}_N\xi\|_{L^2(0,1;\dot{H}^{\sigma})} \right)\\
\le &C(R)\langle m\rangle^{-\sigma}\left(
\|v_0\|_{\dot{L}^2}+\|\xi\|_{L^2(t_1,t_2;\dot{H}^{\sigma}(s_1,s_2))}\right)\le C(\sigma,R)\langle m\rangle^{-\sigma}\|v_0\|_{\dot{L}^2}.
\end{align*}
Therefore, we conclude that
\begin{equation}\label{contracting-v}
\begin{aligned}
    &\|v(1)\|_{\dot{L}^2}\leq \|v(1)-S_a(1)v_0\|_{\dot{L}^2}+\|S_a(1) v_0\|_{\dot{L}^2}\\
    \le &\|(Id-P_m)[v(1)-S_a(1)v_0]\|_{\dot{L}^2}+e^{-a} \|v_0\|_{\dot{L}^2}\le \left(C(\sigma,R)m^{-\sigma}+e^{-a}\right)\|v_0\|_{\dot{L}^2}.
\end{aligned}
\end{equation}

In what follows, we choose the same control $\xi$ for the nonlinear controlled system \eqref{Control-nonlinearproblem} (and hence the assertion (2) remains true), which determine the controlled trajectory $u$. We introduce the function $z=u-w-v$, which verifies the equation
\begin{equation*}
\begin{cases}
\partial_t z+\partial^3_x z+a z+\partial_x(wz)=-\frac{1}{2}\partial_x ((u-w)^2),\\
z(0,\cdot)=0.
\end{cases}
\end{equation*}
The local Lipschitz continuity of solution map implies
\begin{equation*}
\|u-w\|_{\dot{Y}_1^0}\le C(R)\|u_0-w_0\|_{\dot{L}^2}.
\end{equation*}
Therefore, by Proposition~\ref{Prop-linear-1} and Remark~\ref{rmk_Zssource}, we get
\[\|z(1)\|_{\dot{L}^2}\le C(R)\|\partial_x ((u-w)^2)\|_{\dot{Z}^0_1}\le C(R)\|u-w\|_{\dot{Y}^0_1}^2\le C(R)\|u_0-w_0\|^2_{\dot{L}^2}.\]
Consequently, recalling \eqref{contracting-v} and $\|u_0-w_0\|_{\dot{L}^2}\le d$, we find
\[\|u(1)-w(1)\|_{L^2}\leq \|z(1)\|_{L^2}+\|v(1)\|_{L^2}\leq \left(e^{-a}+C(\sigma,R)(m^{-\sigma}+d)\right)\|u_0-w_0\|_{L^2}.\]
Therefore, given any $q\in(e^{-a},1)$, one can choose $m\in \N^+$ sufficiently large and $d>0$ sufficiently small, so that $e^{-a}+C(\sigma,R)(m^{-\sigma}+d)\le q$. The proof of Theorem~\ref{Theorem-control} is then complete.
\end{proof}

\section{Exponential mixing of random equation}\label{Section-EM}

We are in a position to accomplish the proof of the Main Theorem, by combining the results from previous sections with a general criterion for exponential mixing. In Section~\ref{Section-criterion}, we recall this criterion relying on coupling methods \cite{KS-12,Shi-15} and asymptotic compactness \cite{LWX-24,CXZZ-25}. Then in Sections~\ref{Section-AC} and~\ref{Section-coupling}, we verify the associated hypotheses for the random KdV model \eqref{Problem-random}.

\subsection{Abstract criterion}\label{Section-criterion}

Let $(\mathcal X,\|\cdot\|)$ and $(\mathcal{Z},\|\cdot\|_{\mathcal{Z}})$ be separable Banach spaces. Let 
\[S\colon\mathcal{X} \times\mathcal{Z} \rightarrow \mathcal{X}\] 
be a locally Lipschitz map, and $\{\eta_n;n\in\N\}$ be a sequence of $\mathcal{Z}$-valued i.i.d.~random variables. The common law of $\eta_n$ is $\ell$, whose support is denoted by $\mathcal{E}$ (which is later assumed to be compact). The Markov process considered here is given by 
\begin{equation}\label{RDS}
x_{n}=S(x_{n-1},\eta_{n-1}),\quad x_0=x\in\mathcal{X}.
\end{equation}
In order to indicate the initial condition and the random inputs, we also write 
\begin{equation*}
x_n=S_{n}(x;\xi_0,\cdots,\xi_{n-1})=S_{n}(x;\boldsymbol{\xi}),\quad \boldsymbol{\xi}:=(\xi_n;n\in\N).
\end{equation*}

With the above setting, system \eqref{RDS} defines a Feller family of Markov chains $(x_n,\Pb_x)$ in $\mathcal{X}$; see, e.g.~\cite[Section~1.3]{KS-12}. We denote the corresponding Markov transition functions by
\begin{equation*}
P_n(x,A)=\Pb_x (x_n\in A),\quad A\in\mathcal{B}(\mathcal{X}),\ n\in \N,
\end{equation*}
the Markov semigroup by
\[P_n\colon C_b(\mathcal{X})\rightarrow C_b(\mathcal{X}),\quad P_nf(x)= \int_{\mathcal{X}} f(y) P_n(x,dy)\] and its dual semigroup by
\[P^*_n\colon \mathcal{P}(\mathcal{X})\rightarrow \mathcal{P}(\mathcal{X}),\quad P_n^*\mu=\int_{\mathcal{X}}P_n(x,\cdot)\mu(dx).\]
We say $\mu\in \mathcal{P}(\dot{L}^2(\T))$ is \textit{invariant} for this Markov process if 
$P_n^*\mu=\mu$ for any $n\in\N^+$.

Below is a list of hypotheses regarding the abstract criterion for exponential mixing. A subset $\mathcal{Y}\subset \mathcal{X}$ is said to be invariant if $S(\mathcal{Y},\mathcal{E})\subset \mathcal{Y}$.

\begin{itemize}
\item [($\mathbf{EAC}$)]   ({\bf Exponential asymptotic compactness}) There exists a compact invariant subset $\mathcal{Y}$ of $\mathcal{X}$, a constant $\kappa>0$, and a measurable function $V\colon\mathcal{X}\rightarrow\R^+$, which sends bounded sets into bounded sets, such that for any $x\in\mathcal{X}$ and $\boldsymbol{\xi}\in \mathcal{E}^{\N}$,
\begin{equation*}
\text{dist}_{\mathcal X}(S_n(x;\boldsymbol{\xi}),\mathcal{Y})\leq V(x)e^{-\kappa n},\quad \forall n\in \N.
\end{equation*}

\item [($\mathbf{I}$)]   ({\bf Irreducibility on $\mathcal{Y}$}) There exists a point $z\in\mathcal{Y}$ such that for every $\varepsilon>0$, there is an integer $m\in\N^+$ satisfying
\begin{equation*}
\inf_{y\in\mathcal{Y}}P_m(y,B_{\mathcal{X}}(z,\varepsilon))>0.
\end{equation*}
		
\item [($\mathbf{C}$)]   ({\bf Coupling condition on $\mathcal Y$}) There exist constants $q\in[0,1)$ and $C>0$, such that for every $y_1,y_2\in\mathcal{Y}$, one can find a coupling $(\mathcal{R}(y_1,y_2),\mathcal{R}'(y_1,y_2))\in\mathscr{C}(P_1(y_1,\cdot),P_1(y_2,\cdot))$ on a common probability space $(\Omega,\mathcal{F},\Pb)$, satisfying
\begin{equation*}
\Pb(\|\mathcal{R}(y_1,y_2)-\mathcal{R}'(y_1,y_2)\|>q\|y_1-y_2\|)\leq C\|y_1-y_2\|.
\end{equation*}
Moreover, the maps $\mathcal{R},\mathcal{R}'\colon\mathcal{Y}\times \mathcal{Y}\times \Omega\rightarrow \mathcal{X}$ are measurable.
\end{itemize}

\noindent We emphasize that in hypothesis $(\mathbf{EAC})$, the trajectory $S_n(x;\boldsymbol{\xi})$ need not enter $\mathcal{Y}$. As realized by \cite{LWX-24}, exponential attraction allows us to reduce the issue to $\mathcal{Y}$, and then the mixing on $\mathcal{Y}$ is more familiar to the literature (see, e.g.~\cite{KS-12}). The following exponential mixing criterion overcomes the lack of smoothing/compactness, and is thus applicable to dispersive PDEs (we refer to \cite{LWX-24} for wave equation, and \cite{CXZZ-25} for NLS equation).

\begin{proposition}[{\cite[Theorem 2.1]{LWX-24}}]\label{Proposition-EX}
Assume that the support $\mathcal{E}$ of $\ell$ is compact, and hypotheses $(\mathbf{EAC})$, $(\mathbf{I})$ and $(\mathbf{C})$ are satisfied. Then the Markov process $(x_n,\Pb_x)$ admits a unique invariant measure $\mu \in \mathcal{P}(\mathcal{X})$, which is supported in $\mathcal{Y}$. Moreover, there exist constants $C,\gamma >0$ such that
\[\|P_n^* \nu-\mu\|_L^* \le Ce^{-\gamma n}\left(1+\int_{\mathcal{X}} V(x)\nu(dx)\right),\quad \forall n\in \N,\]
for any $\nu\in \mathcal{P}(\mathcal{X})$ satisfying $\int_{\mathcal{X}} V(x)\nu(dx) < \infty$.
\end{proposition}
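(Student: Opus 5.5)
The plan has three stages: existence of an invariant measure on $\mathcal Y$, exponential mixing for initial points in $\mathcal Y$ (the Kuksin--Shirikyan coupling argument of \cite{KS-12,Shi-15}), and extension to arbitrary initial data in $\mathcal X$ through $(\mathbf{EAC})$.

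\emph{Existence.} Since $\mathcal Y$ is compact and invariant, and $S$ is continuous, the restriction of $(x_n,\Pb_x)$ to $\mathcal Y$ is a Feller chain on a compact space. The Bogolyubov--Krylov argument (Ces\`aro averages of $P_n^*\delta_y$ with $y\in\mathcal Y$) then gives an invariant measure $\mu$ supported in $\mathcal Y$. Uniqueness will follow from the mixing estimate below.

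\emph{Mixing on $\mathcal Y$.} I would show that there are $C,\gamma>0$ with $\|P_n^*\delta_{y_1}-P_n^*\delta_{y_2}\|_L^*\le Ce^{-\gamma n}$ uniformly in $y_1,y_2\in\mathcal Y$. The construction is as follows.
\begin{itemize}
\item Iterating the coupling $(\mathcal R,\mathcal R')$ of $(\mathbf C)$ gives a Markov chain $(y^1_n,y^2_n)$ on $\mathcal Y\times\mathcal Y$ with the correct marginals. It stays in $\mathcal Y\times\mathcal Y$ almost surely, because $P_1(y,\cdot)$ is supported in $S(y,\mathcal E)\subset\mathcal Y$.
\item If $\|y_1-y_2\|\le d$, then with probability at least $1-C\sum_k q^kd=1-C'd$ one has $\|y^1_n-y^2_n\|\le q^nd$ for all $n$.
\item When the pair is not close, let the two components evolve independently for $m$ steps. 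By $(\mathbf I)$ with $\varepsilon=d/2$, both land in $B(z,d/2)$ simultaneously with probability at least $p^2>0$, uniformly on $\mathcal Y\times\mathcal Y$.
\item Alternating the two regimes with stopping times produces a renewal structure. The time of the last coupling failure then has an exponential moment, provided $d$ is chosen small enough that $C'd<1$.
\item Because test functions in $L_b$ have Lipschitz constant at most $1$ and sup norm at most $1$, this yields $|\mathbb E f(y^1_n)-\mathbb E f(y^2_n)|\le 2\Pb(\tau>n/2)+q^{n/2}d\le Ce^{-\gamma n}$, where $\tau$ is the time of the last failure. Integrating $y_2$ against $\mu$ gives the bound for $P_n^*\delta_y-\mu$ with $y\in\mathcal Y$.
\end{itemize}
I expect this stopping-time bookkeeping to be the main technical obstacle. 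It must show that the number of restarts is geometrically distributed, with each restart cycle having a uniform exponential tail, which is the classical argument of \cite[Ch.~3]{KS-12}.

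\emph{Extension to $\mathcal X$.} Fix $x\in\mathcal X$ and $n$, and split $n=k+l$ with $l=\lfloor\theta n\rfloor$. By $(\mathbf{EAC})$, $\|x_k-y\|\le V(x)e^{-\kappa k}$ for some measurable selection $y\in\mathcal Y$. Compactness of $\mathcal Y$ and $\mathcal E$, together with local Lipschitz continuity of $S$, give $\rho>0$ and $L\ge1$ with $\|S(x',\zeta)-S(y',\zeta)\|\le L\|x'-y'\|$ whenever $y'\in\mathcal Y$, $\zeta\in\mathcal E$ and $\|x'-y'\|\le\rho$. Since $S(y',\zeta)\in\mathcal Y$, induction gives $\|S_l(x_k;\boldsymbol\xi)-S_l(y;\boldsymbol\xi)\|\le L^lV(x)e^{-\kappa k}$ as long as this quantity is at most $\rho$. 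For a Lipschitz test function $f$ this yields
\[|P_nf(x)-\langle f,\mu\rangle|\le \mathbb E\min\{2,\ L^lV(x)e^{-\kappa k}\}+\sup_{y\in\mathcal Y}|P_lf(y)-\langle f,\mu\rangle|.\]
Here the case $L^lV(x)e^{-\kappa k}>\rho$ is absorbed into the first term by $2\le 2\rho^{-1}L^lV(x)e^{-\kappa k}$. Choosing $\theta$ small enough that $\theta\log L<\kappa(1-\theta)/2$ makes both terms bounded by $C(1+V(x))e^{-\gamma' n}$. Integrating over $\nu$ concludes the proof and also gives uniqueness of $\mu$.
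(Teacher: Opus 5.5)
Your proposal is correct and is essentially the argument behind the cited \cite[Theorem~2.1]{LWX-24}; the paper itself only quotes that result. That argument has two steps: uniform exponential mixing on the compact invariant set $\mathcal Y$ via the Kuksin--Shirikyan coupling driven by $(\mathbf{I})$ and $(\mathbf{C})$, then a transfer to all of $\mathcal X$ that splits $n=k+l$ and balances the attraction rate $e^{-\kappa k}$ from $(\mathbf{EAC})$ against the at most $L^l$ Lipschitz growth of $S$ near the compact set $\mathcal Y\times\mathcal E$.

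One point needs more careful phrasing: uniqueness. An arbitrary invariant $\mu'$ need not satisfy $\int V\,d\mu'<\infty$, so integrating your bound over $\nu=\mu'$ is not enough on its own. Instead, use your pointwise bound $|P_nf(x)-\langle f,\mu\rangle|\le C(1+V(x))e^{-\gamma n}$ together with $|P_nf|\le 1$ and dominated convergence, which gives $\langle f,\mu'\rangle=\lim_n\langle P_nf,\mu'\rangle=\langle f,\mu\rangle$.
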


To conclude this subsection, let us fit the randomly forced KdV \eqref{Problem-random} and the noise setting in Section~\ref{Section-mainresult} into the abstract setting of Proposition~\ref{Proposition-EX}:

\begin{itemize}
    \item[\tiny$\bullet$]  Set $\mathcal X=\dot L^2(\T)$ and $\mathcal{Z}$ as the closed linear span of $\chi \alpha_j(t)\beta_k(x)$ in $L^2(0,1;\dot{H}^{1/4+\sigma})$, namely
    \[\mathcal Z=\overline{{\rm span}\{\chi \alpha_j(t) \beta_k(x);j,k\in \N^+\}}^{L^2(0,1;\dot{H}^{1/4+\sigma})}.\]
    According to \eqref{noisestrength}, the noise $\eta_n$ is an $\mathcal{Z}$-valued random variable. Moreover, the support $\mathcal{E}$ of its law $\ell=\mathscr{D}(\eta_n)\in \mathcal{P}(\mathcal{Z})$ is compact in $\mathcal{Z}$ by diagonal argument.

    \item[\tiny$\bullet$] The map $S\colon \mathcal{X}\times \mathcal{Z}\to \mathcal{X}$ is taken as the restriction of the time-$1$ solution map
\[\dot{L}^2\times L^2(0,1;\dot{H}^{1/4+\sigma})\rightarrow \dot{L}^2,\quad S(u_0,f)=u(1),\]
where $u\in \dot{Y}^0_1$ stands for the solution of \eqref{Problem-nonlinear}. In view of Proposition~\ref{Prop-uncontrolled}, $S$ is well-defined, locally Lipschitz and continuously differentiable. 
\end{itemize}

\noindent Therefore, the proof of the Main Theorem reduces to verifying $(\mathbf{EAC})$, $(\mathbf{I})$ and $(\mathbf{C})$.

\subsection{Asymptotic dynamics}\label{Section-AC}

In this subsection, we concentrate on the first two hypotheses $(\mathbf{EAC})$ and $(\mathbf{I})$. The asymptotic compactness is associated with nonlinear smoothing (Proposition~\ref{prop_nonlinearsmoothing}), which indicates that although both $u(t)$ and $S_a(t) u_0$ belong to $\dot{L}^2$, their difference $u(t)-S_a(t)u_0$ is bounded in a higher Sobolev space $H^{1/4+\sigma}$. Since $S_a(t) u_0$ decays exponentially, we find that $u(t)$ is attracted to a bounded subset of $H^{1/4+\sigma}$ at exponential rate, even though $u(t)$ never gains extra regularity.

\begin{proof}[\bf Verification of hypothesis ($\mathbf{EAC}$)]
By the assumption on the strength of noise \eqref{noisestrength},
\begin{equation}\label{mathcalEbound}
\|h\|_{L^2(0,1;\dot{H}^{1/4+\sigma})}\le \mathbf{B},\quad \forall h\in \mathcal{E},
\end{equation}
where $\mathbf{B}$ is a given constant.
For every $\boldsymbol{h}=(h_n;n\in\N )\in \mathcal{E}^{\N}$, let $u\in C(\R^+;\dot{L}^2(\T))$ be the global solution of \eqref{Problem-nonlinear} with initial data $u_0$ and source term $h_n$ on $[n-1,n)$. By iterating the $L^2$ a priori bound \eqref{a priori-estimate-1}, we find
\begin{equation}\label{Absorbing-1}
\|u(t)\|^2_{\dot{L}^2}
\leq e^{-at}\|u_0\|^2_{\dot{L}^2}+C(\mathbf{B}).
\end{equation}
The remainder of proof is decomposed into two steps.

\medskip

\textit{Step 1 $($asymptotic compactness from bounded sets$)$.} 
Fix any $R>0$, let us prove that
\begin{equation}\label{AC-inequality}
    \|u(t)-S_a(t) u_0\|_{\dot{H}^{1/4+\sigma}}\le C(\mathbf{B},R),\quad \forall u_0\in B_{\dot{L}^2(R)},\ n\in \N.
\end{equation}
And consequently, due to \eqref{S_a(t)decay}, we find
\begin{equation}\label{AC-reduction}
    \dist_{\dot{L}^2} (u(t),\mathcal{K}_R)\le Re^{-at},\quad \text{where }\mathcal{K}_R:=B_{\dot{H}^{1/4+\sigma}}(C(\mathbf{B},R)).
\end{equation}
Note that $\mathcal{K}_R$ is compact in $\dot{L}^2$, since $\dot{H}^{1/4+\sigma}$ is compactly embedded in $\dot{L}^2$. This is slightly weaker than $(\mathbf{EAC})$, since $\mathcal{K}_R$ depends on $R$ and is not necessarily invariant.

In fact, \eqref{Absorbing-1} implies $\|u(t)\|_{\dot{L}^2}\leq C(\mathbf{B},R)$ for any $t\geq 0$. Then Proposition~\ref{prop_nonlinearsmoothing} and \eqref{mathcalEbound} yield
\begin{equation}\label{ACinequality-1}
\|u(n+t)-S_a(t)u(n)\|_{\dot{H}^{1/4+\sigma}}\le C(\mathbf{B},R),\quad \forall t\in [0,1],\ n\in \N.
\end{equation}
Therefore, for any $n\in \N$, it follows that
\begin{equation}\label{ACinequality-2}
\begin{aligned}
&\|u(n)-S_a(n)u_0\|_{\dot{H}^{1/4+\sigma}}=\|\sum_{j=1}^n \left[
S_a(n-j)u(j)-S_a(n-j+1)u(j-1)
\right]\|_{\dot{H}^{1/4+\sigma}}\\
\leq &\sum_{j=1}^n e^{-a(n-j)} \|u(j)-S_a(1)u(j-1)\|_{\dot{H}^{1/4+\sigma}}\le  C(\mathbf{B},R)\sum_{j=0}^\infty e^{-a j}\leq C(\mathbf{B},R).
\end{aligned}
\end{equation}
In general, let us write $t>0$ by $t=n+t'$, where $n\in \N^+$ and $t'\in[0,1)$. Then
\begin{equation*}
\|u(t)-S_a(t)u_0\|_{\dot{H}^{1/4+\sigma}}
\leq \|u(n+t')-S_a(t')u(n)\|_{\dot{H}^{1/4+\sigma}} +\|u(n)-S_a(n)u_0\|_{\dot{H}^{1/4+\sigma}}
\end{equation*}
Here, the first term on the right-hand side is bounded by \eqref{ACinequality-1}, and the second by \eqref{ACinequality-2}. This leads to \eqref{AC-inequality}, and hence \eqref{AC-reduction}.

\medskip
	
\textit{Step 2 $($Absorbing to a bounded set$)$.} Owing to \eqref{Absorbing-1}, one can readily find constants $C_0>0$ and $R=R(\mathbf{B})>0$, such that for any $u_0\in \dot{L}^2$,
\[\|u(t)\|_{\dot{L}^2}<R,\quad \forall t\ge C_0 \log (1+\|u_0\|_{\dot{L}^2}).\]
Let us fix $\mathcal{K}=\mathcal{K}_R$, which satisfies \eqref{AC-reduction}, and a small constant $\gamma\in (0,a)$ (to be chosen later). Then for any $u_0\in\dot{L}^2$ and $t\geq T:=C_0\log \left(1+\|u_0\|_{\dot{L}^2}\right)$, 
\[\dist_{L^2}(u(t),\mathcal K)\le Ce^{-a(t-T)}\le Ce^{-\gamma(t-T)}=C\left(1+\|u_0\|_{L^2}\right)^{C_0\gamma}e^{-\gamma t}.\]
Meanwhile, for $t\leq T$, we invoke \eqref{Absorbing-1} again to check that
\[\dist_{\dot{L}^2}(u(t),\mathcal K)\le C+\|u(t)\|_{L^2}\le C\left(1+\|u_0\|_{L^2}\right)e^{\gamma T}e^{-\gamma t}=C\left(1+\|u_0\|_{L^2}\right)^{1+C_0\gamma}e^{-\gamma t}.\]
In conclusion, 
\[\dist_{L^2}(u(t),\mathcal K)\le C\left(1+\|u_0\|_{L^2}\right)^{1+C_0\gamma}e^{-\gamma t},\ \forall t\geq 0.\]
This gives rise to the desired property ($\mathbf{EAC}$) with $V=C(1+\|u_0\|^2_{L^2})$,
by taking $\gamma$ small enough so that $C_0\gamma\leq 1$, and 
$\mathcal Y\supset \mathcal{K}$ to be the attainable set from $\mathcal K$:
\[
\mathcal Y=\overline{\bigcup_{n\in\N}\mathcal{A}_n}\quad \text{with }\mathcal{A}_0=\mathcal K,\ \mathcal A_n=\{S(v,h);v\in \mathcal{A}_{n-1},h\in \mathcal{E}\}\,(n\geq 1).\]
It is not hard to see that $\mathcal{Y}$ is compact and invariant; see, e.g.~\cite[Proposition 2.2]{LWX-24}.
\end{proof}

\begin{proof}[\bf Verification of hypothesis ($\mathbf{I}$)]
If we take $f(t,x)\equiv 0$ in \eqref{a priori-estimate-1}, then it follows that for every $\varepsilon>0$, there exists an integer $m\in\N^+$ such that
\begin{equation*}
\|S_m(u_0;\boldsymbol{0})\|_{\dot{L}^2}<\frac{\varepsilon}{2}
\end{equation*} 
for any $u_0\in\mathcal Y$, where $\boldsymbol{0}$ stands for a sequence of zeros.
Since the map 
\[\mathcal Y\times\mathcal E^m\ni(u_0,\boldsymbol{h})\mapsto S_m(u_0;\boldsymbol{h})\in \dot{L}^2(\T)\]
is Lipschitz continuous (Proposition~\ref{Prop-uncontrolled}), there exists a constant $\delta>0$ such that 
\begin{equation*}
\|S_m(u_0;\boldsymbol{h})\|_{\dot{L}^2}<\varepsilon
\end{equation*}
for any $u_0\in\mathcal Y$ and $\boldsymbol{h}\in \mathcal{E}^\N$ with $h_0,\cdots, h_{m-1}\in B_{L^2(0,1;\dot{L}^2)}(\delta)$. We then conclude that 
\begin{equation*}
\begin{aligned}
P_m(u_0,B_{\dot{L}^2}(\varepsilon)) \geq \Pb (\|\eta_n\|_{L^2(0,1;\dot{L}^2)}<\delta,\,\forall\,0\leq n\leq m-1) = \ell (B_{L^2(0,1;\dot{L}^2)}(\delta))^{m}>0.
\end{aligned}
\end{equation*}
The last step is due to $0\in \mathcal E$, which is assured by $\rho_{j,k}(0)>0$. 
\end{proof}

\subsection{Coupling property via control}\label{Section-coupling}

We proceed to verify the coupling property for the Markov process $u_n$,  making use of the control property established in Theorem~\ref{Theorem-control}.

\begin{proof}[\bf Verification of hypothesis ($\mathbf{C}$)]
Let us denote the finite dimensional subspace
\begin{equation*}
\mathcal{Z}_1={\rm span}\{\chi \alpha_j(t) \beta_k(x);1\leq j,k\le N\}.
\end{equation*}
In view of \cite[Lemma 5.3]{CXZZ-25}, the conclusion of this proposition follows from the following property: 
\begin{itemize}
\item[\tiny$\bullet$] There exist constants $d>0$ and $q\in (0,1)$ such that for any $(u_0,u_0')\in \mathcal{Y}\times \mathcal{Y}$ with $\|u_0-u_0'\|_{L^2}\le d$, there exists a continuously differentiable map
$\Phi^{u_0,u_0'}\colon \mathcal{Z}\to \mathcal{Z}_1$, such that for $\ell$-a.e.~$h\in \mathcal{Z}$, we have
\begin{equation}\label{coupling-condition-1}
    \|S(u_0,h)-S(u_0',h+\Phi^{u_0,u_0'}(h))\|_{\dot{L}^2}\le q \|u_0-u_0'\|_{\dot{L}^2},
\end{equation}
and in addition,
\begin{equation}\label{coupling-condition-2}
    \sup_{h\in \mathcal{Z}}\|\Phi^{u_0,u_0'}(h)\|_{\dot{L}^2(0,1; \dot{H}^{1/4+\sigma})} \lesssim\|u_0-u_0'\|_{\dot{L}^2},\quad  \Lip(\Phi^{u_0,u_0'})\lesssim\|u_0-u_0'\|_{L^2}. 
\end{equation}
\end{itemize}
If this is true, then $(\mathbf{C})$ holds for any slightly larger $q$ (still strictly less than $1$). Accordingly, the remaining task is to verify \eqref{coupling-condition-1} and \eqref{coupling-condition-2}.

Choose $R>2$ sufficiently large so that 
\begin{equation*}
\mathcal Y\subset B_{\dot{H}^{1/4+\sigma}}(R-2)\quad \text{and}\quad \mathcal E\subset B_{L^2(0,1;\dot{H}_x^{1/4+\sigma})}(R-2).
\end{equation*}
We then apply Theorem~\ref{Theorem-control} with fixed $q\in (e^{-a},1)$ and such $R$, to deduce that there exist constants $N\in\N^+$, $d>0$, and a map 
\[\Phi'\colon B_{\dot{H}^{1/4+\sigma}}(R)\times B_{L^2(0,1;\dot{H}^{1/4+\sigma})}(R)\rightarrow \mathcal{L}(\dot{L}^2(\T);L^2(t_1,t_2;\dot{L}^2(s_1,s_2)))\] 
such that 
\begin{equation}\label{Control-squeezing}
\|S(u_0,h)-S(u_0',h+\chi\mathcal{P}_N\Phi'(u_0,h)(u_0-u_0'))\|_{L^2}\leq q\|u_0-u_0'\|_{L^2}
\end{equation}
for any $u_0,u_0'\in B_{\dot{H}^{1/4+\sigma}(R)}$ with $\|u_0-u_0'\|_{L^2}\leq d$ and $h\in B_{L^2(0,1;\dot{H}^{1/4+\sigma})}(R)$. Moreover, the map $\Phi'$ is Lipschitz and continuously differentiable in its domain. 

Now, the desired map $\Phi^{u_0,u_0'}$ can be constructed as
\[\Phi^{u_0,u_0'}(h)=\phi\left(\|h\|^2_{L^2(0,1;\dot{H}^{1/4+\sigma})}\right)\chi\mathcal{P}_N \Phi'(u_0,h)(u_0-u_0'),\]
where $\phi\in C_0^\infty(\R)$ with $\phi(r)=1$ for $|r|\le (R-2)^2$ and $\phi(r)=0$ for $|r|>(R-1)^2$. Then \eqref{coupling-condition-1} is derived from \eqref{Control-squeezing}, and the $C^1$ regularity of $\Phi^{u,u'}$ and \eqref{coupling-condition-2} are not hard to justify thanks to the corresponding estimates for $\Phi'$.
\end{proof}

Having verified all the hypotheses of Proposition~\ref{Proposition-EX}, the Main Theorem follows immediately.


\appendix


\section{Supplementary ingredients for observability inequality}\label{Appendix-2}

This appendix collects some auxiliary results used in Section~\ref{Section-control}.

\subsection{Profile of linear equations}\label{Appendix-linearequation} 

As shown in Section~\ref{Section-control}, the control for stabilizing the nonlinear system will be obtained by analyzing the linearization \eqref{Control-problem} and its dual version \eqref{Adjoint-problem}. We present basic characterizations of these equations.

Let us introduce the linearized equation of the form
\begin{equation}\label{Problem-linear-1}
\begin{cases}
    \partial_t v+\partial^3_xv+av+\partial_x(wv)=f,\quad t\in[0,T],\\
    v(0,\cdot)=v_0,
\end{cases}
\end{equation}
which coincides with \eqref{Control-problem} by taking $f=\chi\mathcal{P}_n\xi$. 
Another problem to be considered reads 
\begin{equation}\label{Problem-linear-2}
\begin{cases}
    \partial_t \varphi+\partial^3_x\varphi-a\varphi+[w\partial_x\varphi-\widehat{w\partial_x\varphi}(0)]=0,\quad t\in[0,T],\\
    \varphi(T,\cdot)=\varphi_T,
\end{cases}
\end{equation}
which coincides with the dual system \eqref{Adjoint-problem}.

\begin{proposition}\label{Prop-linear-1}
Given $T,R>0$ and $s\in [0,1)$, there exists a constant $C>0$, such that for any $v_0\in\dot{H}^s$, $f\in L^2(0,T;\dot{H}^s)$ and $w\in B_{\dot{Y}^s_T}(R)$, problem \eqref{Problem-linear-1} admits a unique solution $v=:\mathcal{V}_w(v_0,f)\in \dot{Y}^s_T$, satisfying
\begin{equation}\label{Vw(v0,f)est}
    \|v\|_{\dot{Y}^s_T}\leq C\left(\|v_0\|_{\dot{H}^s}+\|f\|_{L^2(0,T;\dot{H}^s)}\right).
\end{equation}
Moreover, the map 
\[B_{\dot{Y}^s_T}(R)\times B_{L^2(0,T;\dot{H}^s)}(R)\ni (w,f)\mapsto \mathcal V_w(\cdot,f)\in\mathcal L(\dot{H}^s; \dot{Y}^s_T)\]
is Lipschitz and continuously differentiable.
\end{proposition}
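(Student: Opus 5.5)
We will proceed by a fixed-point argument in $\dot{Y}^s_\delta$ on short time intervals, followed by gluing via Lemma~\ref{lem_Bougainproperty}(4). Write the Duhamel formulation
\[
v(t)=S_a(t)v_0-\int_0^t S_a(t-r)\partial_x(wv)(r)\,dr+\int_0^t S_a(t-r)f(r)\,dr=:\Gamma(v)(t).
\]
By Lemma~\ref{lem_Bougainproperty}(1)(2), the linear terms are bounded in $\dot{Y}^s_\delta$ by $C(\|v_0\|_{\dot{H}^s}+\|f\|_{L^2(0,\delta;\dot{H}^s)})$. The potential term is controlled by the bilinear estimate Lemma~\ref{lem_Bougainproperty}(3), which applies equally well to $\partial_x(wv)$ with $w\ne v$ since it is bilinear:
\[
\|\partial_x(wv)\|_{\dot{Z}^s_\delta}\le C\delta^{1/6-}\|w\|_{\dot{X}^{s,1/2}_\delta}\|v\|_{\dot{X}^{s,1/2}_\delta}\le C\delta^{1/6-}R\|v\|_{\dot{Y}^s_\delta}.
\]
Since $\Gamma$ is affine in $v$, choosing $\delta=\delta(R)$ with $C\delta^{1/6-}R\le 1/2$ makes $\Gamma$ a contraction on $\dot{Y}^s_\delta$. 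This yields a unique local solution, satisfying $\|v\|_{\dot{Y}^s_\delta}\le 2C(\|v_0\|_{\dot{H}^s}+\|f\|_{L^2\dot{H}^s})$. Mean zero is preserved because $\partial_x(wv)$ has zero mean.

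The key point is that $\delta$ depends only on $R$ and not on the data, because the equation is linear. The bound $\|w\|_{\dot{Y}^s_{[t_0,t_0+\delta]}}\le\|w\|_{\dot{Y}^s_T}<R$ holds on every subinterval, and $\dot{Y}^s\hookrightarrow C_tH^s$ lets us restart from $v(t_0)\in\dot{H}^s$. We cover $[0,T]$ with $\lceil T/\delta\rceil$ overlapping intervals and iterate. This gives a geometric but finite growth $(2C)^{T/\delta}$ of the $\dot{H}^s$ norm at the restart times, and hence by Lemma~\ref{lem_Bougainproperty}(4) a global solution with bound \eqref{Vw(v0,f)est}. We need not worry about the time-translation invariance of the Bourgain restriction norms, since the estimates of Lemma~\ref{lem_Bougainproperty} are translation invariant in $t$. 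Uniqueness in $\dot{Y}^s_T$ follows from the local contraction applied successively on each interval.

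For the dependence on $(w,f)$, the solution is $\mathcal{V}_w(v_0,f)=(I+K_w)^{-1}(S_a v_0+Lf)$ on each short interval, where $K_w v:=\int_0^tS_a(t-r)\partial_x(wv)\,dr$ is linear in $w$ with $\|K_w\|_{\mathcal{L}(\dot{Y}^s_\delta)}\le1/2$ and $\|K_w-K_{w'}\|\le C\|w-w'\|_{\dot{Y}^s_T}$. Neumann series/inversion is smooth (indeed analytic) on the open set of operators with $\|K\|<1$, and $w\mapsto K_w$ is bounded linear. Hence $w\mapsto(I+K_w)^{-1}\in\mathcal{L}(\dot{Y}^s_\delta)$ is Lipschitz and $C^1$, and the affine dependence on $f$ is trivially smooth. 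Composing across the finitely many intervals, where each restart map $v_0\mapsto v(t_0)$ is an evaluation of a $C^1$ family of operators, preserves Lipschitz and $C^1$ dependence, with constants uniform on the balls of radius $R$.

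The main obstacle is the gluing step. We must verify that restricting to subintervals and restarting does not lose constants in a $w$-dependent way beyond $R$, and that the linear-in-$w$ perturbation estimate for $K_w$ holds in the operator norm $\mathcal{L}(\dot{H}^s;\dot{Y}^s_T)$ after gluing. Lemma~\ref{lem_Bougainproperty}(4) handles the norm gluing. For differentiability, we will write the derivative explicitly as $D_w\mathcal{V}_w[\tilde w]=-(I+K_w)^{-1}K_{\tilde w}\mathcal{V}_w$ on each interval and check continuity in $w$ via the same bilinear bound.
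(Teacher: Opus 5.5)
Your proposal is correct and follows the route the paper intends; the paper omits the details, saying only that the argument is simpler than the nonlinear well-posedness proof. That route is a contraction for the Duhamel map in $\dot{Y}^s_\delta$ via Lemma~\ref{lem_Bougainproperty}(1)--(3), with $\delta=\delta(R)$ independent of the data thanks to linearity (so no energy or smoothing a priori bound is needed), followed by iteration and gluing through Lemma~\ref{lem_Bougainproperty}(4), and Lipschitz/$C^1$ dependence on $(w,f)$ via the Neumann series for $(I+K_w)^{-1}$ composed over the finitely many restart intervals.
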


\begin{remark}\label{rmk_Zssource}
This result can be slightly generalized to $f\in \dot{Z}^s_T$, while the proof remains valid. In particular, the term $\|f\|_{L^2(0,T;\dot{H}^s)}$ in \eqref{Vw(v0,f)est} can be replaced by $\|f\|_{\dot{Z}^s_T}$.
\end{remark}

\begin{proposition}\label{Prop-linear-2}
Given $T,R,s^*>0$ and $s\in (-s^*,s^*)$, there exists a constant $C>0$, such that for any $\varphi_T\in\dot{H}^{-s}$ and $w\in B_{\dot{Y}^{s^*}_T}(R)$, problem \eqref{Problem-linear-2} admits a unique solution $\varphi=:\mathcal U_w(\varphi_T)\in \dot{Y}^{-s}_T$, satisfying
\[\|\varphi\|_{\dot{Y}^{-s}_T}\leq C\|\varphi_T\|_{\dot{H}^{-s}}.\]
Moreover, the map 
\[B_{\dot{Y}^{s^*}_T}(R)\ni w\mapsto \mathcal U_w(\cdot)\in\mathcal L(\dot{H}^{-s};\dot{Y}^{-s}_T)\]
is Lipschitz and continuously differentiable.
\end{proposition}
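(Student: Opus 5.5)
The statement is Proposition~\ref{Prop-linear-2}. I would first reduce it to Proposition~\ref{Prop-linear-1} by reversing time and then solving by a fixed point in Bourgain spaces.

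\textbf{Time reversal.} Set $\tilde\varphi(t,x)=\varphi(T-t,-x)$ and $\tilde w(t,x)=w(T-t,-x)$. Then $\tilde\varphi$ solves a forward problem
\[\partial_t\tilde\varphi+\partial_x^3\tilde\varphi+a\tilde\varphi+\bigl[\tilde w\partial_x\tilde\varphi-\widehat{\tilde w\partial_x\tilde\varphi}(0)\bigr]=0,\qquad \tilde\varphi(0)=\varphi_T(-\cdot).\]
The map $(t,x)\mapsto (T-t,-x)$ preserves the phase $\tau-k^3$ up to sign symmetry, so it is an isometry of $\dot Y^{r}_T$ and $\dot Z^r_T$. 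The sign in front of the potential term is harmless.

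\textbf{Local fixed point.} Write the Duhamel map
\[\Gamma\tilde\varphi=S_a(t)\tilde\varphi(0)-\int_0^tS_a(t-r)\bigl[\tilde w\partial_x\tilde\varphi-\widehat{\tilde w\partial_x\tilde\varphi}(0)\bigr](r)\,dr.\]
By Lemma~\ref{lem_Bougainproperty}(2) it suffices to prove a bilinear estimate
\[\|\tilde w\partial_x\tilde\varphi-\widehat{\tilde w\partial_x\tilde\varphi}(0)\|_{\dot Z^{-s}_\delta}\le C\delta^{\theta}\|\tilde w\|_{\dot Y^{s^*}_\delta}\|\tilde\varphi\|_{\dot Y^{-s}_\delta}\]
for some $\theta>0$. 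This is exactly the content of Lemma~\ref{Lemma-bilinear} and Corollary~\ref{Coro-bilinear}, which I take as available. The key input is the resonance identity $\tau-k^3-\sum_j(\tau_j-k_j^3)=-3kk_1k_2$. When $k=k_1+k_2$ with $k_1$ the frequency of $\tilde w$, the largest modulation is at least $|kk_1k_2|$. This lets one move up to one derivative, which suffices to offset the mismatch between the regularity $s^*$ of $\tilde w$ and the regularity $-s$ of $\tilde\varphi$ whenever $|s|<s^*$.

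The mean-zero correction removes the $k=0$ output, where no smoothing is available. The interaction $k_2=-k_1$ (so $k=0$) is exactly the one excluded; the other resonant cases $k_1=0$ or $k_2=0$ vanish because both functions have mean zero. The $\delta^\theta$ gain comes from Lemma~\ref{Lemma-Bourgain}(4) (trading $b$ for powers of $\delta$) together with the $l^2L^1$ component handled as in Lemma~\ref{lem_Bougainproperty}(3).

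\textbf{Globalization, dependence on $w$, and the main obstacle.} Since the equation is linear, $\delta$ depends only on $R$. I iterate on intervals $[j\delta,(j+1)\delta]$ and glue the pieces with Lemma~\ref{lem_Bougainproperty}(4), obtaining the bound $C(T,R)\|\varphi_T\|_{\dot H^{-s}}$; continuity in time comes from Lemma~\ref{lem_Bougainproperty}(1). Uniqueness follows from the same contraction on each small interval. The solution map $w\mapsto\mathcal U_w$ is the inverse of $I+K_w$, where $K_w$ denotes the Duhamel potential operator. This operator is affine and bounded in $w$, so it is smooth in $w$. By the Neumann series on short intervals and composition along the partition, $w\mapsto(I+K_w)^{-1}$ is Lipschitz and $C^1$ (indeed analytic) into $\mathcal L(\dot H^{-s};\dot Y^{-s}_T)$.

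The main obstacle is the bilinear estimate when $-s<0$, i.e.\ $\tilde\varphi$ has negative regularity, in the high--high to low interaction. There the output frequency $k$ is small while $|k_1|\sim|k_2|$ are large, and the weight $\langle k\rangle^{-s}$ gives nothing. I would handle it by case analysis on which modulation is dominant, and use the $L^4$ Strichartz bound of Lemma~\ref{Lemma-Bourgain}(3) when the output modulation is large.
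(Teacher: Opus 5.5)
Your proposal is correct and is essentially the paper's argument. The paper also obtains Proposition~\ref{Prop-linear-2} by a fixed-point argument driven by the $\dot Z$ bilinear estimate of Lemma~\ref{Lemma-bilinear} and its $T^{1/6-}$ gain; your time reversal $(t,x)\mapsto(T-t,-x)$, gluing via Lemma~\ref{lem_Bougainproperty}(4), and inversion of $I+K_w$ for the Lipschitz/$C^1$ dependence supply the standard details the paper omits. One wording slip: the opening phrase ``reduce it to Proposition~\ref{Prop-linear-1}'' is inaccurate, since the reversed problem keeps the potential $\tilde w\partial_x\tilde\varphi-\widehat{\tilde w\partial_x\tilde\varphi}(0)$ (not $\partial_x(\tilde w\tilde\varphi)$) with $\tilde w$ and $\tilde\varphi$ at different regularities, but your fixed-point argument never actually uses that reduction.
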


The proof of Proposition~\ref{Prop-linear-1} is much simpler than that for the nonlinear problem \eqref{Problem-nonlinear} (see Proposition~\ref{Prop-uncontrolled}). Meanwhile, the proof of Proposition~\ref{Prop-linear-2} is based on $Z^s_T$ estimate for the bilinear operator $w\partial_x\varphi-\widehat{w\partial_x\varphi}(0)$, which is collected in Lemma~\ref{Lemma-bilinear} below. With this lemma in hand, it is not hard to apply the fixed point argument to conclude the proof of Proposition~\ref{Prop-linear-2}; the details are omitted.

\begin{lemma}\label{Lemma-bilinear}
Given $s^*>0$ and $s\in (-s^*,s^*)$, there exists a constant $C>0$, such that for any $T>0$,
\begin{align*}
\|w\partial_x\varphi-\widehat{w\partial_x\varphi}(0)\|_{\dot{Z}^s_T}&\le C (T^{1/6-}\wedge 1) \|w\|_{\dot{X}^{s^*,1/2}_T}\|\varphi\|_{\dot{X}^{s,1/2}_T}.
\end{align*}
\end{lemma}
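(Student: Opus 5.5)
We will prove the estimate first on $\R$ (for extensions $\tilde w\in\dot X^{s^*,1/2}$, $\tilde\varphi\in\dot X^{s,1/2}$ of $w,\varphi$ that are mean zero in space), and then pass to $[0,T]$ by taking infima over extensions. Write $F=w\partial_x\varphi-\widehat{w\partial_x\varphi}(0)$. Its Fourier transform at $(\tau,k)$, $k\neq0$, is
\[
\widehat F(\tau,k)=\sum_{k_1+k_2=k}\int_{\tau_1+\tau_2=\tau} i k_2\,\widehat w(\tau_1,k_1)\widehat\varphi(\tau_2,k_2)\,d\tau_1 ,
\]
with $k_1,k_2\neq0$ by the mean-zero assumptions. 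The only change from the symmetric KdV nonlinearity $\partial_x(uv)$ is the derivative weight: $k_2$ instead of $k=k_1+k_2$. We use the resonance identity $\tau-k^3-(\tau_1-k_1^3)-(\tau_2-k_2^3)=-3kk_1k_2$. This gives
\[
\max\{\langle\tau-k^3\rangle,\langle\tau_1-k_1^3\rangle,\langle\tau_2-k_2^3\rangle\}\gtrsim |k||k_1||k_2| .
\]
The plan is to follow the proof of Lemma~\ref{lem_Bougainproperty}(3) (i.e.\ \cite[Theorem 3.19, Corollary 3.20]{ET-16}). We split into the three cases according to which modulation is maximal, and treat both parts of the $Z^s$ norm: the $X^{s,-1/2}$ part and the $l^2_kL^1_\tau$ part weighted by $\langle\tau-k^3\rangle^{-1}$.

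In each case we bound the multiplier
\[
\frac{\langle k\rangle^{s}|k_2|}{\langle k_1\rangle^{s^*}\langle k_2\rangle^{s}}\cdot\frac{1}{(\text{maximal modulation})^{1/2}}\lesssim \frac{\langle k\rangle^{s}|k_2|}{\langle k_1\rangle^{s^*}\langle k_2\rangle^{s}\,|kk_1k_2|^{1/2}} .
\]
The two remaining modulation factors are handled by the $L^4_{t,x}$ Strichartz bound (Lemma~\ref{Lemma-Bourgain}(3)), which costs $X^{0,1/3}$. This leaves room $1/2-1/3=1/6$ in the modulation, and Lemma~\ref{Lemma-Bourgain}(4) turns that room into the factor $T^{1/6-}$. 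Since $|k_2|\le|k|+|k_1|$, the multiplier is bounded by
\[
\frac{\langle k\rangle^{s}|k_2|^{1/2}}{|k|^{1/2}|k_1|^{1/2}\langle k_1\rangle^{s^*}\langle k_2\rangle^{s}} .
\]
We then consider the frequency regimes in turn.
\begin{itemize}
\item When $|k_1|\lesssim|k|\sim|k_2|$, the multiplier is about $|k_1|^{-1/2-s^*}$. This is summable after Cauchy--Schwarz, where the gain $s^*>0$ covers the logarithmic loss.
\item When $|k_1|\sim|k_2|\gg|k|$, the multiplier is about $\langle k\rangle^{s-1/2}\langle k_1\rangle^{-s^*-s}$, which is bounded precisely because $s+s^*>0$. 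This is where $s\in(-s^*,s^*)$ enters.
\item When $|k_2|\ll|k|\sim|k_1|$, the multiplier is about $\langle k\rangle^{s-s^*-1}\langle k_2\rangle^{1/2-s}$, which is bounded since $s<s^*$.
\end{itemize}
Thus the hypothesis $|s|<s^*$ is exactly what makes the asymmetric derivative harmless. Note that $s$ may be negative, in which case the weights $\langle k\rangle^{s}$ and $\langle k_2\rangle^{-s}$ invert; the same case analysis still applies.

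The main obstacle is the $l^2_kL^1_\tau$ component of the $Z^s$ norm, together with the borderline case where the output modulation $\langle\tau-k^3\rangle$ is maximal. There the $L^2$ duality argument gives only $X^{s,-1/2}$, and the extra $\langle\tau-k^3\rangle^{-1}$ integrated in $L^1_\tau$ must be controlled separately. For this I would reuse Bourgain's argument as in \cite{ET-16}. Cauchy--Schwarz in $\tau$ reduces to $X^{s,-1/2+}$-type bounds with a small loss, which is absorbed by the strict inequalities $|s|<s^*$ (when $s^*$ is small, shrink the $\varepsilon$ accordingly). In the complementary regions, the bound follows from the $X^{s,-1/2}$ estimate, since there $\langle\tau-k^3\rangle^{-1/2}$ is controlled by $\langle\tau-k^3\rangle^{-1/2-}$ after paying a tiny power of $|kk_1k_2|$. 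The $T^{1/6-}\wedge1$ factor comes as above from the $X^{0,1/3}$ Strichartz step. The bound by $1$ for large $T$ is trivial, since the constant is otherwise uniform.
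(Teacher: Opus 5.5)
Your treatment of the $X^{s,-1/2}$ component is the same as the paper's: the case split by maximal modulation, the resonance bound, H\"older with two $L^4_{t,x}$ factors costing $X^{0,1/3}$, and Lemma~\ref{Lemma-Bourgain}(4) for the factor $T^{1/6-}$. The gap is in the step you yourself single out: the $l^2_kL^1_\tau$ component when $\langle\tau-k^3\rangle$ is the largest modulation. As you describe it, you apply Cauchy--Schwarz in $\tau$ over all of $\R$, reduce to an $X^{s,-1/2+\varepsilon}$ bound, and absorb the loss $|kk_1k_2|^{\varepsilon}$ using $|s|<s^*$. This fails in the regime $|k_1|\ll|k_2|\sim|k|\sim N$, where $w$ is at low frequency and $\varphi$ at high frequency. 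There the weights $\langle k\rangle^{s}$ and $\langle k_2\rangle^{-s}$ cancel, and the weight of $w$ only acts at the low frequency. The multiplier becomes $\sim N^{2\varepsilon}\langle k_1\rangle^{-1/2-s^*+\varepsilon}$, which grows in $N$ whatever $s,s^*$ are.

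This is not an artifact of crude bounds. Take $\widehat w$ supported on $k_1=\pm1$, $|\tau_1-k_1^3|\le 1$, and $\widehat\varphi$ supported on $k_2=\pm N$, $|\tau_2-k_2^3|\le 1$. At $k=N+1$ the Fourier transform of $w\partial_x\varphi$ has size $\sim N$ on a $\tau$-interval of length $O(1)$, on which $\langle\tau-k^3\rangle\sim N^2$. Hence $\|w\partial_x\varphi\|_{X^{s,-1/2+\varepsilon}}\gtrsim N^{s+2\varepsilon}$, while $\|w\|_{X^{s^*,1/2}}\|\varphi\|_{X^{s,1/2}}\sim N^{s}$. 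So the $X^{s,-1/2+\varepsilon}$ estimate is false for every $\varepsilon>0$, which is exactly why the $Z^s$ norm is needed.

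What is missing is a gain at the output frequency. On the output-dominant region, $\langle\tau-k^3\rangle\gtrsim|kk_1k_2|\gtrsim\langle k\rangle^2$, since $|k_1k_2|\gtrsim|k|$ for nonzero integers. If Cauchy--Schwarz in $\tau$ is taken only over $\{\langle\tau-k^3\rangle\gtrsim\langle k\rangle^2\}$, the factor $\|\langle\tau-k^3\rangle^{-1/2-\varepsilon}\|_{L^2_\tau}$ is $\lesssim\langle k\rangle^{-2\varepsilon}$, which repays the $N^{2\varepsilon}$. The paper's treatment of $\widetilde{I}_3$ does exactly this:
\begin{itemize}
\item it uses only the power $[1-(s^*-\rho)]/2$ of the resonance bound and keeps the factor $\langle\tau_3-k_3^3\rangle^{-(1+s^*-\rho)/2}$;
\item it pays the loss $\langle k_1\rangle^{s^*-\rho}$ with the extra regularity of $w$, which is where the strict inequality $\rho=-s<s^*$ enters;
\item it pays the loss $\langle k_3\rangle^{s^*-\rho}$ with $\int_{\R}(\langle\tau\rangle+\langle k\rangle^2)^{-(1+s^*-\rho)}\,d\tau\lesssim\langle k\rangle^{-2(s^*-\rho)}$.
\end{itemize}

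A smaller point concerns the complementary regions. Your justification there is not right as stated, because $\langle\tau-k^3\rangle$ is not bounded by a power of $|kk_1k_2|$ in those regions. However, nothing needs to be paid there. Since $\|u\|_{X^{0,-2/3}}\lesssim\|\widehat u\|_{l^2_kL^\infty_\tau}$, the test factor $\mathcal{F}^{-1}\bigl(|\widehat u|\langle\tau-k^3\rangle^{-1}\bigr)$ already lies in $X^{0,1/3}$. The H\"older--Strichartz argument then applies verbatim, with the full maximal modulation placed on $w$ or $\varphi$.
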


This lemma is obtained by adapting the arguments for the classical  estimate in Lemma~\ref{lem_Bougainproperty}(3) (cf.~\cite{CKSTT-03,ET-16}). Nevertheless, unlike the case of Schr\"odinger equations in \cite{CXZZ-25}, one cannot directly derive this estimate from the classical by taking duals. For the sake of completeness, we provide the details of proof in the sequel. We mention that a natural question is if this result remains valid for $s=\pm s^*$, while the weaker version stated here is sufficient for the current paper.

\begin{proof}[\bf Proof of Lemma~\ref{Lemma-bilinear}]
It suffices to check that, for any $w\in \dot{X}^{s^*,1/2}$ and $\varphi\in \dot{X}^{s,1/2}$ (without time restriction), we have
\begin{equation}\label{eq-bilinear-1}
\|w\partial_x\varphi-\widehat{w\partial_x\varphi}(0)\|_{\dot{Z}^s}\le C \left[\|w\|_{\dot{X}^{s^*,1/2}} \|\varphi\|_{\dot{X}^{s,1/3}}+\|w\|_{\dot{X}^{s^*,1/3}} \|\varphi\|_{\dot{X}^{s,1/2}}\right].
\end{equation}
Indeed, this implies the corresponding estimate with the same constant $C$ on $[0,T]$. Then according to Lemma~\ref{Lemma-Bourgain}(4),
the conclusion of this lemma follows immediately.

To this end, we only deal with $s\in (-s^*,0]$, while the case of $s\in [0,s^*)$ is analogous (and simpler). Set $\rho:=-s\in [0,s^*)$. We need to estimate the two parts of the $\dot{Z}^s$ norm \eqref{def_Z^s}.

\medskip

{\it Estimate for the first part of $Z^s$ norm:} Since $w\partial_x\varphi-\widehat{w\partial_x\varphi}(0)$ has mean zero in space, by duality and $\rho=-s$, we can represent $\|w\partial_x\varphi-\widehat{w\partial_x\varphi}(0)\|_{\dot{X}^{s,-1/2}}$ as
\begin{equation}\label{bound-1}
\sup_{\|u\|_{\dot{X}^{\rho,1/2}}=1} \left|\sum_{k_1+k_2+k_3=0}\int_{\tau_1+\tau_2+\tau_3=0} k_2 \widehat{w}(\tau_1,k_1)\widehat{\varphi}(\tau_2,k_2)\widehat{u}(\tau_3,k_3)\right|.
\end{equation}
Note that we always have $k_1,k_2,k_3\not =0$ in the summation.
Let us define the auxiliary functions
\begin{equation*}
\begin{aligned}
&f_1(\tau,k)=\langle k\rangle^{\rho} \langle\tau-k^3 \rangle^{1/2}|\widehat{w}(\tau,k)|,\\
&f_2(\tau,k)=\langle k\rangle^{-\rho} \langle\tau-k^3 \rangle^{1/2}|\widehat{\varphi}(\tau,k)|,\\
&f_3(\tau,k)=\langle k\rangle^{\rho} \langle\tau-k^3 \rangle^{1/2}|\widehat{u}(\tau,k)|.
\end{aligned}
\end{equation*}
Then \eqref{bound-1} (and thus $\|w\partial_x\varphi-\widehat{w\partial_x\varphi}(0)\|_{\dot{X}^{s,-1/2}}$) can be bounded by
\begin{equation}\label{bound-2}
\sum_{k_1+k_2+k_3=0}\int_{\tau_1+\tau_2+\tau_3=0}Mf_1(\tau_1,k_1)f_2(\tau_2,k_2)f_3(\tau_3,k_3)d\tau_1d\tau_2d\tau_3 ,
\end{equation}
where the multiplier $M=M(k_1,k_2,k_3,\tau_1,\tau_2,\tau_3)$ takes the form
\[M:=\frac{\langle k_2\rangle^{1+\rho}}{\langle k_1\rangle^\rho\langle k_3\rangle^{\rho}\langle\tau_1-k_1^3\rangle^{1/2}\langle\tau_2-k_2^3\rangle^{1/2}\langle\tau_3-k_3^3\rangle^{1/2}}.\]
Due to $k_j\neq 0$ and 
$
\sum_{j=1}^{3}(\tau_j-k_j^3)=-3k_1k_2k_3,
$
it follows that
\begin{equation}\label{maxtau-k^3}
    \max \{\langle \tau_j-k_j^3\rangle\}\gtrsim \langle k_1\rangle \langle k_2\rangle \langle k_3\rangle.
\end{equation}

Let $I_i\ (i=1,2,3)$ stand for the contribution in \eqref{bound-2} of the summand satisfying 
\[\max\{\langle\tau_j-k_j^3\rangle\}=\langle\tau_i-k_i^3\rangle.\]
For the terms in $I_1$, using \eqref{maxtau-k^3} and $\langle k_2\rangle=\langle k_1+k_3\rangle\lesssim \langle k_1\rangle \langle k_3\rangle$, we find
\[M\lesssim \frac{\langle k_2\rangle^{1/2+\rho}}{\langle k_1\rangle^{1/2+\rho}\langle k_3\rangle^{1/2+\rho}\langle\tau_2-k_2^3 \rangle^{1/2}\langle\tau_3-k_3^3 \rangle^{1/2}}\lesssim \frac{1 }{\langle\tau_2-k_2^3 \rangle^{1/2}\langle\tau_3-k_3^3 \rangle^{1/2}}.\]
This together with Cauchy--Schwarz inequality and Lemma~\ref{Lemma-Bourgain}(3) implies that 
\begin{align*}
I_1&\lesssim \sum_{k_1+k_2+k_3=0}\int_{\tau_1+\tau_2+\tau_3=0}\frac{f_1(\tau_1,k_1)f_2(\tau_2,k_2)f_3(\tau_3,k_3)}{\langle\tau_2-k_2^3 \rangle^{1/2}\langle\tau_3-k_3^3 \rangle^{1/2}} d\tau_1 d\tau_2 d\tau_3\\
&=\int_{\R\times \T}\mathcal{F}^{-1}(f_1)\mathcal{F}^{-1}\left(\frac{f_2}{\langle\tau-k^3 \rangle^{1/2}}\right)\mathcal{F}^{-1}\left(\frac{f_3}{\langle\tau-k^3 \rangle^{1/2}}\right)dt dx\\
&\lesssim \|\mathcal{F}^{-1}(f_1)\|_{L^2_{t,x}}\left\|\mathcal{F}^{-1}\left(\frac{f_2}{\langle\tau-k^3 \rangle^{1/2}}\right)\right\|_{L^4_{t,x}}\left\|\mathcal{F}^{-1}\left(\frac{f_3}{\langle\tau-k^3 \rangle^{1/2}}\right)\right\|_{L^4_{t,x}}\\
&\lesssim \|\mathcal{F}^{-1}(f_1)\|_{L^2_{t,x}}\left\|\mathcal{F}^{-1}\left(\frac{f_2}{\langle\tau-k^3 \rangle^{1/2}}\right)\right\|_{X^{0,1/3}}\left\|\mathcal{F}^{-1}\left(\frac{f_3}{\langle\tau-k^3 \rangle^{1/2}}\right)\right\|_{X^{0,1/3}}\\
&\lesssim\|w\|_{\dot{X}^{\rho,1/2}}\|\varphi\|_{\dot{X}^{-\rho,1/3}}\|u\|_{\dot{X}^{\rho,1/3}}\lesssim \|w\|_{\dot{X}^{s^*,1/2}}\|\varphi\|_{\dot{X}^{s,1/3}}.
\end{align*}
We can address $I_2$ and $I_3$ in analogous ways, which lead to
\begin{align*}
I_2&\lesssim \left\|\mathcal{F}^{-1}\left(\frac{f_1}{\langle\tau-k^3 \rangle^{1/2}}\right)\right\|_{X^{0,1/3}} \|\mathcal{F}^{-1}(f_2)\|_{L^2_{t,x}} \left\|\mathcal{F}^{-1}\left(\frac{f_3}{\langle\tau-k^3 \rangle^{1/2}}\right)\right\|_{X^{0,1/3}}\\
&\lesssim \|w\|_{\dot{X}^{\rho,1/3}} \|\varphi\|_{\dot{X}^{-\rho,1/2}}\|u\|_{\dot{X}^{\rho,1/3}}\lesssim \|w\|_{\dot{X}^{s^*,1/3}}\|\varphi\|_{\dot{X}^{s,1/2}},
\end{align*}
and
\begin{align*}
I_3&\lesssim \left\|\mathcal{F}^{-1}\left(\frac{f_1}{\langle\tau-k^3 \rangle^{1/2}}\right)\right\|_{X^{0,1/3}} \left\|\mathcal{F}^{-1}\left(\frac{f_2}{\langle\tau-k^3 \rangle^{1/2}}\right)\right\|_{X^{0,1/3}} \|\mathcal{F}^{-1}(f_3)\|_{L^2_{t,x}}\\
&\lesssim \|w\|_{\dot{X}^{\rho,1/3}} \|\varphi\|_{\dot{X}^{-\rho,1/3}} \|u\|_{\dot{X}^{\rho,1/2}}\lesssim \|w\|_{\dot{X}^{s^*,1/3}} \|\varphi\|_{\dot{X}^{s,1/3}}.
\end{align*}

\medskip

{\it Estimate for the second part of $Z^s$ norm:} Let us rewrite the second part as
\begin{equation}\label{summand-1}
\sup_{\widehat{u}(\tau,0)\equiv 0,\ \|\widehat u\|_{l_k^2L_\tau^\infty}=1} \left|\sum_{k_1+k_2+k_3=0}\int_{\tau_1+\tau_2+\tau_3=0}\frac{k_2 \widehat{w}(\tau_1,k_1)\widehat{\varphi}(\tau_2,k_2)\widehat{u}(\tau_3,k_3)}{\langle k_3\rangle^{\rho}\langle\tau_3-k_3^3\rangle}\right|.
\end{equation}
We still have $k_j\not =0$ in the summation. This can be further bounded by
\begin{equation}\label{summand-2}
    \sup_{\widehat{u}(\tau,0)\equiv 0,\ \|\widehat u\|_{l_k^2L_\tau^\infty}=1}\sum_{k_1+k_2+k_3=0}\int_{\tau_1+\tau_2+\tau_3=0}\widetilde Mf_1(\tau_1,k_1)f_2(\tau_2,k_2)|\widehat{u}(\tau_3,k_3)|,
\end{equation}
where the new multiplier $\widetilde{M}$ is
\[\widetilde M:=\frac{\langle k_2\rangle^{1+\rho}}{\langle k_1\rangle^\rho\langle k_3\rangle^{\rho}\langle\tau_1-k_1^3\rangle^{1/2}\langle\tau_2-k_2^3\rangle^{1/2}\langle\tau_3-k_3^3\rangle} (=M\langle \tau_3-k_3^3\rangle^{-1/2}).\]

Let $\widetilde{I}_i\ (i=1,2,3)$ stand for the contribution in \eqref{summand-2} of the summand satisfying $\max\{\langle\tau_j-k_j^3\rangle\}=\langle\tau_i-k_i^3\rangle$. For the terms in $\widetilde{I}_1$, we use the same arguments as for $M$ to find
\[\widetilde{M}\lesssim \frac{1}{\langle\tau_2-k_2^3\rangle^{1/2}\langle\tau_3-k_3^3\rangle}.\]
One can readily verify $\|u\|_{X^{0,-b}}\lesssim \|\widehat{u}\|_{l^2_k L^\infty_\tau}$ for $b>1/2$, due to $\|\langle \tau\rangle^{-b}\|_{L^2_\tau}<\infty$. Hence
\begin{align*}
\widetilde{I}_1 &\lesssim \int_{\R\times\T}\mathcal{F}^{-1}(f_1)\mathcal{F}^{-1}\left(\frac{ f_2}{\langle\tau-k^3\rangle^{1/2}}\right)\mathcal{F}^{-1}\left(\frac{|\widehat{u}(\tau,k)|}{\langle\tau-k^3\rangle}\right)dtdx\\
& \lesssim\|\mathcal{F}^{-1}(f_1)\|_{L^2_{t,x}}\left\|\mathcal{F}^{-1}\left(\frac{ f_2}{\langle\tau-k^3\rangle^{1/2}}\right)\right\|_{X^{0,1/3}}\left\|\mathcal{F}^{-1}\left(\frac{|\widehat{u}(\tau,k)|}{\langle\tau-k^3\rangle}\right)\right\|_{X^{0,1/3}}\\
& \lesssim \|w\|_{\dot{X}^{\rho,1/2}}\|\varphi\|_{\dot{X}^{-\rho,1/3}}\|u\|_{\dot{X}^{0,-2/3}}\lesssim\|w\|_{\dot{X}^{s^*,1/2}}\|\varphi\|_{\dot{X}^{s,1/3}}.
\end{align*}
And in the same manner, we have
\begin{align*}
    \widetilde{I}_2& \lesssim \left\|\mathcal{F}^{-1}\left(\frac{ f_1}{\langle\tau-k^3\rangle^{1/2}}\right)\right\|_{X^{0,1/3}} \|\mathcal{F}^{-1}(f_2)\|_{L^2_{t,x}} \left\|\mathcal{F}^{-1}\left(\frac{|\widehat{u}(\tau,k)|}{\langle\tau-k^3\rangle}\right)\right\|_{X^{0,1/3}}\\
    &\lesssim \|w\|_{\dot{X}^{\rho,1/3}}\|\varphi\|_{\dot{X}^{-\rho,1/2}}\|u\|_{\dot{X}^{0,-2/3}}\lesssim\|w\|_{\dot{X}^{s^*,1/3}}\|\varphi\|_{\dot{X}^{s,1/2}}.
\end{align*}
The estimate for $\widetilde{I}_3$ is more subtle and relies on the assumption that $\rho=-s<s^*$. Indeed, taking the $[1-(s^*-\rho)]/2$-th power of \eqref{maxtau-k^3}, we find that for any term belonging to $\widetilde{I}_3$,
\begin{align*}
\widetilde{M}&\lesssim \frac{\langle k_2\rangle^{(1+s^*+\rho)/2}}{\langle k_1\rangle^{(1-s^*+3\rho)/2} \langle k_3\rangle^{(1-s^*+3\rho)/2} \langle \tau_1-k_1^3\rangle^{1/2} \langle \tau_2-k_2^3\rangle^{1/2} \langle \tau_3-k_3^3\rangle^{(1+s^*-\rho)/2}}\\
&\lesssim \frac{\langle k_1\rangle^{s^*-\rho}\langle k_3\rangle^{s^*-\rho}}{\langle \tau_1-k_1^3\rangle^{1/2} \langle \tau_2-k_2^3\rangle^{1/2} \langle \tau_3-k_3^3\rangle^{(1+s^*-\rho)/2}}.
\end{align*}
Also note that $\langle k_3\rangle \lesssim \langle k_1\rangle \langle k_2\rangle$ and \eqref{maxtau-k^3} imply
$\langle \tau_3-k_3^3\rangle\gtrsim \langle k_3\rangle^2$.
Therefore
\begin{align*}
\widetilde{I}_3&\lesssim \left\|\mathcal{F}^{-1}\left(\frac{\langle k\rangle^{s^*-\rho} f_1}{\langle\tau-k^3\rangle^{1/2}}\right)\right\|_{X^{0,1/3}} \left\|\mathcal{F}^{-1}\left(\frac{f_2}{\langle\tau-k^3\rangle^{1/2}}\right)\right\|_{X^{0,1/3}}\cdot \left\|\frac{\langle k\rangle^{s^*-\rho} |\widehat{u}(\tau,k)|}{(\langle\tau-k^3\rangle+\langle k\rangle^2)^{(1+s^*-\rho)/2}}\right\|_{l^2_k L^2_\tau}\\
&\lesssim \|w\|_{\dot{X}^{s^*,1/3}} \|\varphi\|_{\dot{X}^{\rho,1/3}} \|\widehat{u}(\tau,k)\|_{l^2_k L^\infty_\tau} \left\|\frac{\langle k\rangle^{s^*-\rho}}{(\langle\tau-k^3\rangle+\langle k\rangle^2)^{(1+s^*-\rho)/2}}\right\|_{l^\infty_k L^2_\tau}.
\end{align*}
Finally, we use the standard estimate
\[\int_{\R} \frac{d\tau}{(\langle \tau\rangle+\langle k\rangle^2)^{1+s^*-\rho}}\lesssim \frac{1}{\langle k\rangle^{2(s^*-\rho)}}.\]
This yields the last $l^\infty_k L^2_\tau$ norm in the estimate for $\widetilde{I}_3$ finite.

In conclusion, we arrive at \eqref{eq-bilinear-1} and the proof is complete. 
\end{proof}

From the above analysis, we can further derive the following modified estimates, which trades some spatial regularity for temporal regularity, and is useful to the proof of full observability in Lemma~\ref{Lemma-fullobs}. The ideas are exactly the same, but some parameters must be adjusted. For the sake of rigor, we provide the details.
	
\begin{corollary}\label{Coro-bilinear}
Given $T,s^*>0$ and $s\in [0,s^*)$, for any
\[\varepsilon\le \min \{(s^*-s)/2,1/6\},\]
there exists a constant $C>0$, such that
\begin{equation}\label{bilinear-modify-1}
\|w\partial_x \varphi-\widehat{w\partial_x \varphi}(0)\|_{\dot{X}^{-s-2\varepsilon,-1/2+\varepsilon}_T}\leq C\|w\|_{\dot{X}^{s^*,1/2}_T}\|\varphi\|_{\dot{X}^{-s,1/2}_T}
\end{equation}
and
\begin{equation}\label{bilinear-modify-2}
\|w\partial_x \varphi-\widehat{w\partial_x \varphi}(0)\|_{\dot{X}^{-s-2\varepsilon,-1/2}_T}\leq C\|w\|_{\dot{X}_T^{s^*,1/2-\varepsilon}}\|\varphi\|_{\dot{X}^{-s,1/2-\varepsilon}_T}.
\end{equation}
\end{corollary}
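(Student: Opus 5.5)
The plan is to repeat the trilinear duality argument from the proof of Lemma~\ref{Lemma-bilinear}, keeping only the $X^{s,b}$-type part of the norm (no $l^2_kL^1_\tau$ part is needed). First reduce to global-in-time spaces by taking extensions, as before. By duality with $\rho:=s$, the left side of \eqref{bilinear-modify-1} equals the supremum over $\|u\|_{\dot{X}^{s+2\varepsilon,1/2-\varepsilon}}=1$ of the trilinear form \eqref{bound-1}. With $f_1,f_2,f_3$ defined as before (weights $\langle k\rangle^{s^*}\langle\tau-k^3\rangle^{1/2}$, $\langle k\rangle^{-s}\langle\tau-k^3\rangle^{1/2}$, $\langle k\rangle^{s+2\varepsilon}\langle\tau-k^3\rangle^{1/2-\varepsilon}$), the multiplier becomes
\[M'=\frac{\langle k_2\rangle^{1+s}}{\langle k_1\rangle^{s^*}\langle k_3\rangle^{s+2\varepsilon}\langle\tau_1-k_1^3\rangle^{1/2}\langle\tau_2-k_2^3\rangle^{1/2}\langle\tau_3-k_3^3\rangle^{1/2-\varepsilon}}.\]
Split into $I_1,I_2,I_3$ according to which modulation is maximal, and use \eqref{maxtau-k^3}.

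In the cases $I_1$ and $I_2$ (maximal modulation on a $1/2$-power factor), bound $\langle\tau_i-k_i^3\rangle^{-1/2}\lesssim(\langle k_1\rangle\langle k_2\rangle\langle k_3\rangle)^{-1/2}$. Combining this with $\langle k_2\rangle\lesssim\langle k_1\rangle\langle k_3\rangle$ and $s<s^*$ shows the spatial weight is bounded. The two remaining modulation factors have powers $1/2$ and $1/2-\varepsilon\ge 1/3$, because $\varepsilon\le 1/6$. Hence the $L^2\times L^4\times L^4$ Hölder argument with Lemma~\ref{Lemma-Bourgain}(3) applies exactly as before.

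In case $I_3$, the factor $\langle\tau_3-k_3^3\rangle^{1/2-\varepsilon}$ falls short by $\varepsilon$ of the power $1/2$. I would therefore take the $(1/2-\varepsilon)$-power of \eqref{maxtau-k^3}. This leaves a loss of $(\langle k_1\rangle\langle k_2\rangle\langle k_3\rangle)^{\varepsilon}$ in the multiplier, to be absorbed by the extra $\langle k_3\rangle^{-2\varepsilon}$ and the gap $\langle k_1\rangle^{-(s^*-s)}$, with $s^*-s\ge 2\varepsilon$. Here $\langle k_2\rangle^{\varepsilon}\lesssim\langle k_1\rangle^{\varepsilon}\langle k_3\rangle^{\varepsilon}$ should close the count. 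This bookkeeping is the step I expect to be most delicate: the constraints $\varepsilon\le(s^*-s)/2$ and $\varepsilon\le1/6$ are exactly what it uses, and I would check each frequency configuration ($k_2$ largest versus $k_1$ largest) separately.

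For \eqref{bilinear-modify-2}, proceed the same way, with $w,\varphi$ weighted by $\langle\tau-k^3\rangle^{1/2-\varepsilon}$ and $u$ by $\langle\tau-k^3\rangle^{1/2}$. The total modulation deficit is again $2\varepsilon$ on the inputs. In each case $I_i$, the maximal modulation supplies at least $(\langle k_1\rangle\langle k_2\rangle\langle k_3\rangle)^{1/2-\varepsilon}$. The leftover frequency loss of power $\varepsilon$ in each of $k_1,k_2,k_3$ is paid by the $2\varepsilon$ derivatives lost on the output and by the gap $s^*-s$. The non-maximal factors keep powers $\ge1/2-\varepsilon\ge1/3$, so the $L^4$ Strichartz estimate still applies. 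Finally restrict to $[0,T]$ by taking infima over extensions.
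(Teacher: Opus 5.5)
Your proposal is correct and is essentially the paper's argument: duality against $u\in\dot{X}^{s+2\varepsilon,1/2-\varepsilon}$ (resp.\ $\dot{X}^{s+2\varepsilon,1/2}$), a split according to the maximal modulation, the $(1/2-\varepsilon)$-power of the resonance bound whenever the maximal modulation carries only exponent $1/2-\varepsilon$, and the $L^2\times L^4\times L^4$ H\"older estimate, with $\varepsilon\le 1/6$ keeping the non-maximal exponents $\ge 1/3$. In the third case of \eqref{bilinear-modify-2} the paper uses the full $1/2$-power, which is equivalent; and the case split by frequency configuration that you anticipate is unnecessary, since $\langle k_2\rangle\lesssim\langle k_1\rangle\langle k_3\rangle$ alone bounds the spatial weight in every case by $\langle k_1\rangle^{s+2\varepsilon-s^*}\le 1$.
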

	
\begin{proof}[{\bf Proof of Corollary~\ref{Coro-bilinear}}]
To establish \eqref{bilinear-modify-1}, by dual method, we have
\[\text{LHS of }\eqref{bilinear-modify-1}\leq \sup_{\|u\|_{\dot{X}^{s+2\varepsilon,1/2-\varepsilon}}=1} \sum_{k_1+k_2+k_3=0}\int_{\tau_1+\tau_2+\tau_3=0}Mf_1(\tau_1,k_1)f_2(\tau_2,k_2)f_3(\tau_3,k_3).\]
Here $k_1,k_2,k_3\not =0$ in the summation,
\begin{equation*}
\begin{aligned}
&f_1(\tau,k)=\langle k\rangle^{s^*} \langle\tau-k^3 \rangle^{1/2}|\widehat{w}(\tau,k)|,\\
&f_2(\tau,k)=\langle k\rangle^{-s} \langle\tau-k^3 \rangle^{1/2}|\widehat{\varphi}(\tau,k)|,\\
&f_3(\tau,k)=\langle k\rangle^{s+2\varepsilon} \langle\tau-k^3 \rangle^{1/2-\varepsilon}|\widehat{u}(\tau,k)|,
\end{aligned}
\end{equation*}
and
\[M=\frac{\langle k_2\rangle^{1+s}}{\langle k_1\rangle^{s^*} \langle k_3\rangle^{s+2\varepsilon}\langle\tau_1-k_1^3 \rangle^{1/2}\langle\tau_2-k_2^3 \rangle^{1/2}\langle\tau_3-k_3^3 \rangle^{1/2-\varepsilon}}.\]
Let $I_i$ stand for the contribution in the case of $\max\{\langle\tau_j-k_j^3\rangle\}=\langle\tau_i-k_i^3\rangle$.
The estimates for $I_1,I_2$ are almost the same as in Lemma~\ref{Lemma-bilinear}. For $I_3$, provided $\varepsilon\le (s^*-s)/2$, we have
\begin{align*}
M&\lesssim \frac{\langle k_2\rangle^{1/2+s+\varepsilon}}{\langle k_1\rangle^{1/2+s^*-\varepsilon} \langle k_3\rangle^{1/2+s+\varepsilon} \langle\tau_1-k_1^3 \rangle^{1/2}\langle\tau_2-k_2^3 \rangle^{1/2}}\lesssim \frac{1}{\langle\tau_1-k_1^3 \rangle^{1/2}\langle\tau_2-k_2^3 \rangle^{1/2}},
\end{align*}
which leads to
\[I_3\lesssim \|w\|_{\dot{X}^{s^*,1/3}}\|\varphi\|_{\dot{X}^{-s,1/3}}\|u\|_{\dot{X}^{s+2\varepsilon,1/2-\varepsilon}}\lesssim \|w\|_{\dot{X}^{s^*,1/3}}\|\varphi\|_{\dot{X}^{-s,1/3}}.\]
This finishes the proof of \eqref{bilinear-modify-1}. 

Next we turn to \eqref{bilinear-modify-2}. By dual method,
\[\text{LHS of }\eqref{bilinear-modify-2}=\sup_{\|u\|_{\dot{X}^{s,1/2}}=1} \sum_{k_1+k_2+k_3=0}\int_{\tau_1+\tau_2+\tau_3=0} \widetilde{M} \widetilde{f}_1(\tau_1,k_1) \widetilde{f}_2(\tau_2,k_2) \widetilde{f}_3(\tau_3,k_3).\]
Here $k_1,k_2,k_3\not =0$ in the summation,
\begin{equation*}
\begin{aligned}
&\widetilde{f}_1(\tau,k)=\langle k\rangle^{s^*} \langle\tau-k^3 \rangle^{1/2-\varepsilon}|\widehat{w}(\tau,k)|,\\
&\widetilde{f}_2(\tau,k)=\langle k\rangle^{-s} \langle\tau-k^3 \rangle^{1/2-\varepsilon}|\widehat{\varphi}(\tau,k)|,\\
&\widetilde{f}_3(\tau,k)=\langle k\rangle^{s+2\varepsilon} \langle\tau-k^3 \rangle^{1/2}|\widehat{u}(\tau,k)|,
\end{aligned}
\end{equation*}
and
\[\widetilde{M}=\frac{\langle k_2\rangle^{1+s}}{\langle k_1\rangle^{s^*} \langle k_3\rangle^{s+2\varepsilon}\langle\tau_1-k_1^3 \rangle^{1/2-\varepsilon}\langle\tau_2-k_2^3 \rangle^{1/2-\varepsilon}\langle\tau_3-k_3^3 \rangle^{1/2}}.\]
Let $\widetilde{I}_i$ stand for the contribution in the case of $\max\{\langle\tau_j-k_j^3\rangle\}=\langle\tau_i-k_i^3\rangle$.
The estimates for $\widetilde{I}_1$ is similar to $I_3$ above, namely for $\varepsilon\le (s^*-s)/2$, we have
\[\widetilde{M}\lesssim \frac{\langle k_2\rangle^{1/2+s+\varepsilon}}{\langle k_1\rangle^{1/2+s^*-\varepsilon} \langle k_3\rangle^{1/2+s+\varepsilon}\langle\tau_2-k_2^3 \rangle^{1/2-\varepsilon}\langle\tau_3-k_3^3 \rangle^{1/2}}\lesssim \frac{1}{\langle\tau_2-k_2^3 \rangle^{1/2-\varepsilon}\langle\tau_3-k_3^3 \rangle^{1/2}},\]
which leads to
\[\widetilde{I}_1\lesssim \|w\|_{\dot{X}^{s^*,1/2-\varepsilon}} \|\varphi\|_{\dot{X}^{-s,1/3}} \|u\|_{\dot{X}^{s+2\varepsilon,1/3}}\lesssim \|w\|_{\dot{X}^{s^*,1/2-\varepsilon}} \|\varphi\|_{\dot{X}^{-s,1/2-\varepsilon}}.\]
The estimate for $\widetilde{I}_2$ is analogous. And the estimate for $I_3$ is simpler, since in this case we have
\[\widetilde{M}\lesssim \frac{1}{\langle \tau_1-k_1^3\rangle^{1/2-\varepsilon} \langle \tau_2-k_2^3\rangle^{1/2-\varepsilon}},\]
which leads to
\[\widetilde{I}_3\lesssim \|w\|_{\dot{X}^{s^*,1/3}}\|\varphi\|_{\dot{X}^{-s,1/3}} \|u\|_{\dot{X}^{s+2\varepsilon,1/2}}\lesssim \|w\|_{\dot{X}^{s^*,1/2-\varepsilon}}\|\varphi\|_{\dot{X}^{-s,1/2-\varepsilon}}.\]
Now the proof is complete.
\end{proof}

\subsection{Propagation properties}\label{Appendix-Propagation}

In this subsection, let us recall two propagation results of linear KdV equation, which are involved in the proof of full observability inequality (Lemma~\ref{Lemma-fullobs}). The first is the propagation of compactness.

\begin{proposition}[{\cite[Proposition 3.5]{LRZ-10}}]\label{Prop-propagation-1}
Let $T>0$, $0\leq b'\leq b\leq 1$ with $b>0$, and a non-empty subset $\omega\subset \T$ be arbitrarily given. Assume that the sequences $\{u^n\}\subset X_T^{0,b} $ and $\{f^n\}\subset X_T^{-2+2b,-b}$ satisfy (in the sense of distributions)
\[\partial_t u^n+\partial_x^3u^n=f^n,\quad \|u^n\|_{X_T^{0,b}}\leq C\]
for some constant $C>0$, and 
\[\|u^n\|_{X_T^{-2+2b,-b}}+\|f^n\|_{X_T^{-2+2b,-b}}+\|u^n\|_{X_T^{-1+2b',-b'}}\rightarrow 0.\]
If $u^n\rightarrow 0$ strongly in 
$L^2(0,T;L^2(\omega)),$
then $u^n\rightarrow 0$ strongly in $L^2_{loc}(0,T;L^2(\T)).$
\end{proposition}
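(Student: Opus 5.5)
This is the propagation of compactness result of Laurent--Rosier--Zhang, and I would prove it by the standard microlocal commutator (positive-commutator) argument for the Airy operator $L=\partial_t+\partial_x^3$ on $\T$.

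First, I would localize in time. Fix $\phi\in C_c^\infty(0,T)$ and set $v^n=\phi(t)u^n$. Then
\[
Lv^n=\phi f^n+\phi' u^n=:g^n .
\]
The sequence $v^n$ is bounded in $X^{0,b}$, and the hypotheses give $\|v^n\|_{X^{-2+2b,-b}}+\|g^n\|_{X^{-2+2b,-b}}\to0$. It then suffices to show $\|v^n\|_{L^2_{t,x}}\to 0$.

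Next, I would build a transport symbol. Fix a nonempty interval $\omega_0\Subset\omega$. Choose $\psi\in C^\infty(\T)$ and set $B=\psi(x)D^{-2}$, where $D^{-2}$ is the Fourier multiplier $|k|^{-2}$ on nonzero modes, an operator of order $-2$. The commutator satisfies
\[
[B,\partial_x^3]=-3\psi'(x)\partial_x^2D^{-2}+\text{(order $\le -1$ terms)}=3\psi'(x)+\text{l.o.t.}
\]
Now let $\psi'=\mathbf 1-c\,\chi_0$, where $\chi_0\in C_c^\infty(\omega_0)$ has $\int\chi_0=1$ and $c=2\pi$, so that $\psi$ is periodic. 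Then
\[
([B,L]v^n,v^n)_{L^2_{t,x}}=3\|v^n\|_{L^2}^2-3c\,(\chi_0 v^n,v^n)+(\text{l.o.t.}\,v^n,v^n).
\]

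I would then pair the equation with $B^*$ and $B$. Expanding $(Lv^n,B^*v^n)+(Bv^n,Lv^n)$, whose real part equals the commutator pairing, expresses the left-hand side through
\[
(g^n,B^*v^n)+(Bv^n,g^n).
\]
Each of these is bounded by
\[
\|g^n\|_{X^{-2+2b,-b}}\,\|Bv^n\|_{X^{2-2b,b}}\lesssim \|g^n\|_{X^{-2+2b,-b}}\,\|v^n\|_{X^{0,b}}\to0,
\]
where I use that $B$ has order $-2$ and that multiplication by a smooth $\psi$ maps $X^{-2b,b}$ to itself by the weighted version of Lemma~\ref{Lemma-Bourgain}(2). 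The localized term $(\chi_0v^n,v^n)$ tends to $0$ by the hypothesis $u^n\to0$ in $L^2(0,T;L^2(\omega))$.

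The lower-order terms of the commutator involve $\psi''\partial_x D^{-2}$ and similar operators of order $-1$. I would bound $|(\text{l.o.t.}\,v^n,v^n)|$ by the $X^{-1/2,0}$ norm of $v^n$, and then interpolate between the bound in $X^{0,b}$ and the vanishing in $X^{-1+2b',-b'}$ to get convergence to $0$. This uses compactness when $b'<b$, and uses the hypothesis on $X^{-1+2b',-b'}$ directly when $b'=b$.

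I expect this interpolation for the remainder to be the main obstacle. It requires checking that $X^{-1/2,0}$ lies strictly between $X^{0,b}$ and $X^{-1+2b',-b'}$ on the interpolation line, which holds because the space exponents and modulation exponents combine linearly. Care is also needed with the zero Fourier mode, which is harmless here. Combining the steps gives $3\|v^n\|_{L^2}^2\to0$, and hence the claimed convergence in $L^2_{loc}(0,T;L^2(\T))$.
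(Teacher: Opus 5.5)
Your proposal is correct and follows essentially the same positive-commutator argument as \cite[Proposition 3.5]{LRZ-10}, which the paper cites rather than reproves: an order $-2$ multiplier $\psi(x)D^{-2}$ whose commutator with $\partial_x^3$ has principal part $3\psi'$, paired against the equation via the $X^{-2+2b,-b}$--$X^{2-2b,b}$ duality, together with a choice of $\psi$ whose derivative equals $1$ off $\omega$ and the localized hypothesis on $\omega$. The remaining differences are cosmetic. You put the time cutoff on $u^n$ rather than in the multiplier, and you kill the order $-1$ remainder through $X^{-1/2,0}$, either by compactness (which works for every $b>0$) or by interpolation with a small parameter $\theta>0$, instead of pairing directly against $X^{-1+2b',-b'}$. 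Two small fixes are needed: multiplication by $\psi$ maps $X^{0,b}$ into $X^{-2b,b}$, not $X^{-2b,b}$ into itself, and the identity $([B,L]v,v)=(Lv,B^*v)+(Bv,Lv)$ should be justified by a regularization argument.
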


And the second is the propagation of regularity.

\begin{proposition}[{\cite[Proposition 3.6]{LRZ-10}}]\label{Prop-propagation-2}
    Let $T>0$, $0\leq b<1$, $r\in\R$, and a non-empty subset $\omega\subset \T$ be arbitrarily given. Assume that $u\in  X^{r,b}_T$ and $f\in X^{r,-b}_T$ satisfy (in the sense of distributions)
    \[\partial_t u+\partial_x^3u=f.\]
    If $u\in L^2_{loc}(0,T;H^{r+\rho}(\omega))$ and $0<\rho\leq \min\{1-b,1/2\}$, then $u\in L^2_{loc}(0,T;H^{r+\rho}(\T))$.
\end{proposition}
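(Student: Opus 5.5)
The plan is a positive-commutator (multiplier) argument for the Airy operator $L=\partial_t+\partial_x^3$, in the spirit of Dehman--G\'erard--Lebeau and Laurent. Since $D^r$, defined as in the proof of Lemma~\ref{Lemma-fullobs} via $\widehat{D^r u}(k)=\mathrm{sgn}(k)|k|^r\widehat u(k)$ and $\widehat{D^ru}(0)=\widehat u(0)$, commutes with $L$ and maps $X^{r,b}_T\to X^{0,b}_T$, I first reduce to $r=0$. After replacing $u$ by $D^r u$, the hypothesis $u\in L^2_{loc}(0,T;H^{r+\rho}(\omega))$ becomes $D^ru\in L^2_{loc}(0,T;H^{\rho}(\omega'))$ for any $\omega'\Subset\omega$. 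This follows from a pseudolocality statement: for a cutoff $\tilde\chi$ supported in $\omega$, the commutator $[\tilde\chi,D^r]$ is smoothing by one order (cf.\ \cite[Lemma A.1]{Laurent-ECOCV}). I also fix a time cutoff $\phi\in C_0^\infty(0,T)$ and work with $\phi u$, which satisfies $L(\phi u)=\phi f+\phi' u$. The extra term $\phi'u$ lies in $X^{0,-b}$ because $u\in X^{0,b}$.

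The key identity is as follows. Take the order-$(2\rho-2)$ operator $A=\psi(x)D^{2\rho-2}$, with $\psi\in C^\infty(\T)$ real, and let $A^\ast$ be its adjoint. For smooth $u$ compactly supported in time,
\[
\bigl(Lu,A^\ast u\bigr)+\bigl(Au,Lu\bigr)=\bigl([A,\partial_x^3]u,u\bigr),
\]
using that $L$ is skew-adjoint up to the $\partial_t$ boundary terms, which vanish because of the time cutoff. By symbolic calculus, $[A,\partial_x^3]=-3\psi'\,\partial_x^2D^{2\rho-2}+\text{(order }2\rho-1)$. The principal part is therefore $3\psi' D^{2\rho}$, up to sign, and the lower-order remainder is bounded by $\|u\|_{L^2_tH^{\rho-1/2}}^2\le\|u\|_{L^2}^2$ because $\rho\le 1/2$. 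This is exactly where the restriction $\rho\le 1/2$ enters.

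On $\T$ any mean-zero smooth function is a derivative. I choose $\psi$ so that $\psi'=g^2-h^2$, where $g\equiv 1$ on a large set with $\int g^2=\int h^2$, and $h$ is supported in $\omega$. The identity then gives
\[
\|g D^{\rho}u\|_{L^2_{t,x}}^2\lesssim \|hD^\rho u\|_{L^2_{t,x}}^2+\|u\|_{L^2}^2+|(f,A^\ast u)|+|(Au,f)|.
\]
The first term on the right is finite by hypothesis, modulo commuting $h$ past $D^\rho$ at the cost of lower-order terms. For the $f$-terms I use the duality between $X^{0,-b}$ and $X^{0,b}$. By Lemma~\ref{Lemma-Bourgain}(2), multiplication by $\psi$ costs $2b$ spatial derivatives at level $b$, while $D^{2\rho-2}$ gains $2-2\rho$. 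The condition $\rho\le 1-b$ gives exactly $2-2\rho\ge 2b$, so $A$ and $A^\ast$ are bounded $X^{0,b}\to X^{0,b}$. Hence $|(f,A^\ast u)|\lesssim\|f\|_{X^{0,-b}}\|u\|_{X^{0,b}}$. Since $\omega$ can be translated around the torus, finitely many choices of $g$ cover $\T$, which yields $u\in L^2_{loc}(0,T;H^\rho(\T))$.

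The main obstacle is justification. The identity above is formal, because a priori we do not know that $D^\rho u\in L^2$ globally, so the left-hand side might be infinite. I would therefore regularize with a frequency truncation $\Pi_\epsilon=(1-\epsilon^2\partial_x^2)^{-1}$ or a Fourier cutoff, and apply the argument to $\Pi_\epsilon u$. Since $\Pi_\epsilon$ commutes with $L$ and is uniformly bounded on all $X^{s,b}$, each term is controlled uniformly in $\epsilon$, provided the commutator $[h,\Pi_\epsilon]$ is bounded by the needed orders uniformly in $\epsilon$. This is the delicate symbolic-calculus check, and I would verify it first. Letting $\epsilon\to0$ by weak compactness then concludes the argument, yielding Proposition~\ref{Prop-propagation-2} as in \cite[Proposition 3.6]{LRZ-10}.
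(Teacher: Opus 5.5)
\textbf{Where the proposal breaks.} Your overall route is the right one: it is the positive-commutator argument behind \cite[Proposition 3.6]{LRZ-10}, which the paper quotes without reproducing. You also correctly locate where $\rho\le 1/2$ enters (the order-$(2\rho-1)$ remainder of $[A,\partial_x^3]$ is controlled by $\|u\|_{L^2}^2$) and where $\rho\le 1-b$ enters ($A$ and $A^*$ map $X^{0,b}$ into itself by Lemma~\ref{Lemma-Bourgain}(2)).

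However, the positivity step fails for the multiplier you actually chose. You take $A=\psi(x)D^{2\rho-2}$ with the signed $D$ from the proof of Lemma~\ref{Lemma-fullobs}, whose symbol is $\mathrm{sgn}(k)|k|^{2\rho-2}$. The principal part of $[A,\partial_x^3]=[\psi,\partial_x^3]D^{2\rho-2}$ then has symbol $3\psi'(x)\,\mathrm{sgn}(k)|k|^{2\rho}$, which is odd in $k$. This is not the ``up to sign'' ambiguity you allow for. It is also not $D^\rho\psi'D^\rho$, which has symbol $\psi'(x)|k|^{2\rho}$ because $(D^\rho)^2=|D|^{2\rho}$.

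In fact the leading form vanishes. For real-valued $u$, write $\mathrm{sgn}(D)|D|^{2\rho}=iS$, where $S$ is the real, skew-adjoint multiplier with symbol $-i\,\mathrm{sgn}(k)|k|^{2\rho}$. Then $(\psi'\,\mathrm{sgn}(D)|D|^{2\rho}u,u)=i(Su,\psi'u)$. Skew-adjointness gives $2(Su,\psi'u)=-(u,[S,\psi']u)$, which is of order $2\rho-1$. So the top-order part of your identity vanishes identically and yields no bound on $\|gD^\rho u\|$. For complex $u$ it would instead weight positive and negative frequencies with opposite signs.

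\textbf{The fix.} Use the even multiplier $A=\psi(x)|D|^{2\rho-2}$, with symbol $|k|^{2\rho-2}$ for $k\neq0$. Then $-\partial_x^2|D|^{2\rho-2}=|D|^{2\rho}$ off the zero mode, so the principal part of $[A,\partial_x^3]$ is $3\psi'|D|^{2\rho}$. Moreover $(\psi'|D|^{2\rho}u,u)=\|g|D|^\rho u\|^2-\|h|D|^\rho u\|^2$ up to forms of order $2\rho-1\le 0$.

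With this change the rest of your argument goes through: the reduction to $r=0$ (where the signed $D^r$ is harmless), the time cutoff, the $X^{0,-b}$--$X^{0,b}$ duality, and the regularization by $\Pi_\epsilon$ with uniform $S^0$ commutator bounds.

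Two small simplifications are also available. On $\T$ you may take $g\equiv1$ and $\psi'=1-h^2$ with $\int_\T h^2=2\pi$, so no covering by translates is needed. And for $b<1/2$, the integration by parts in $t$ is justified because $\Pi_\epsilon(\phi u)\in X^{2,b}$ together with $L\Pi_\epsilon(\phi u)\in X^{2,-b}$ forces $\Pi_\epsilon(\phi u)\in X^{2,1-b}\subset X^{2,1/2}$, which suffices for a density argument.
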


\subsection{Hilbert uniqueness method}\label{Section-HUM}

We carry out the derivation from the truncated observability (Lemma~\ref{Lemma-truncatedobs}) to low-frequency controllability (Proposition~\ref{Prop-LFcontrol}). The argument is similar to \cite[Proposition 5.5]{LWX-24} and \cite[Proposition 4.9]{CXZZ-25}, which do not depend on the specific PDE setting. For this reason, we only sketch the proof.

For $s\in [0,1/4+\sigma)$ and $v_0\in \dot{H}^s(\T)$, we introduce a functional $J\colon \dot{H}_m\to \R$ by
\begin{equation}\label{Functional-J}
J(\varphi_1)=\frac{1}{2}\int_{t_1}^{t_2}\|\mathcal{P}_NG\varphi\|^2_{\dot{H}^{-s}(s_1,s_2)}dt+(v_0,\varphi(0))_{\dot{L}^2},\quad \varphi_1\in \dot{H}_m.
\end{equation}
Here $G$ is defined via \eqref{G-definition}, and $\varphi=\mathcal{U}_w (\varphi_1)$ is the solution of adjoint system \eqref{Adjoint-problem}. The following result turns out to be a refinement of Proposition~\ref{Prop-LFcontrol}.

\begin{proposition}\label{Prop-LFcontrol-HUM}
Given $\sigma\in (0,1/4)$, $s\in [0,1/4+\sigma)$, $R>0$ and $m\in\N^+$, there exists a constant $N\in\N^+$ such that the following assertions hold.
\begin{enumerate}
\item[$(1)$] For every $w\in B_{\dot{Y}^{1/4+\sigma}_1}(R)$ and $v_0\in \dot{H}^s$, the functional $J\colon \dot{H}_m\to \R$ defined by \eqref{Functional-J} admits a unique global minimizer $\breve\varphi_1\in \dot{H}_m$. Moreover, by viewing $\dot{H}_m$ as a subspace of $\dot{H}^{-s}$, the HUM-type map defined by
\[\Xi(w)\colon \dot{H}^s\to \dot{H}^{-s},\quad \Xi(w)(v_0):=\breve{\varphi}_1\]
is a bounded linear operator, and the map 
\[B_{\dot{Y}^{1/4+\sigma}_1}(R)\ni w\mapsto\Xi(w)\in\mathcal L(\dot{H}^s,\dot{H}^{-s})\]
is Lipschitz and continuously differentiable.
		
\item[$(2)$] If the control $\breve{\xi}$ is chosen as
\[\breve\xi=(-\Delta_{loc})^{-s}(\mathcal{P}_NG\breve{\varphi}),\quad \breve{\varphi}=\mathcal{U}_w(\breve{\varphi}_1),\]
then
\[\int_{t_1}^{t_2}\|\breve\xi(t)\|^2_{\dot{H}^s(s_1,s_2)}dt\leq C(\sigma,s,R)\|v_0\|^2_{\dot{H}^s},\]
and the solution $\breve v=\mathcal V_w(v_0,\chi \mathcal{P}_N \breve\xi)$ of linearized equation \eqref{Control-problem} satisfies 
\begin{equation*}
P_m{\breve v(1)}=0,
\end{equation*}
\end{enumerate}
\end{proposition}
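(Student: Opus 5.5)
The plan is the standard HUM argument, with Lemma~\ref{Lemma-truncatedobs} as the only nontrivial input. Fix $\sigma,s,R,m$ and let $N=N(m)$ be given by Lemma~\ref{Lemma-truncatedobs}. For $w\in B_{\dot{Y}^{1/4+\sigma}_1}(R)$, the map $\dot{H}_m\ni\varphi_1\mapsto\varphi=\mathcal{U}_w(\varphi_1)$ is linear and bounded into $\dot{Y}^{-s}_1\subset C([0,1];\dot{H}^{-s})$ by Proposition~\ref{Prop-linear-2}. Hence $J$ in \eqref{Functional-J} is a continuous quadratic functional on the finite-dimensional space $\dot{H}_m$. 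Its quadratic part is
\[
Q_w(\varphi_1):=\int_{t_1}^{t_2}\|\mathcal{P}_NG\varphi\|^2_{\dot{H}^{-s}(s_1,s_2)}dt ,
\]
and \eqref{Truncated-obs} gives $Q_w(\varphi_1)\ge C_0\|\varphi_1\|^2_{\dot{H}^{-s}}$ with $C_0$ independent of $w$. Therefore $J$ is strictly convex and coercive, and it has a unique minimizer $\breve\varphi_1$.

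The minimizer is characterized by the Euler--Lagrange equation $A(w)\breve\varphi_1=-\ell_{v_0}$. Here $A(w)$ is the symmetric operator on $\dot{H}_m$ associated with $Q_w$, using the $\dot{H}^{-s}$ inner product restricted to $\dot{H}_m$, and $\ell_{v_0}(\psi_1):=(v_0,\mathcal{U}_w(\psi_1)(0))_{\dot{L}^2}$. Observability gives $A(w)\ge C_0$, so $\|A(w)^{-1}\|\le C_0^{-1}$ uniformly on the ball. Also $|\ell_{v_0}(\psi_1)|\le C(R)\|v_0\|_{\dot{H}^s}\|\psi_1\|_{\dot{H}^{-s}}$ by the $\dot{H}^s$--$\dot{H}^{-s}$ duality and Proposition~\ref{Prop-linear-2}. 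Thus $\Xi(w)v_0=-A(w)^{-1}\ell_{v_0}$ is linear and bounded with norm at most $C(\sigma,s,R)$, and this bound does not depend on $m$.

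For regularity in $w$, Proposition~\ref{Prop-linear-2} says $w\mapsto\mathcal{U}_w$ is Lipschitz and $C^1$ into $\mathcal{L}(\dot{H}^{-s};\dot{Y}^{-s}_1)$. Since $G$, $\mathcal{P}_N$ and evaluation at $t=0$ are bounded linear maps, $w\mapsto A(w)$ and $w\mapsto\ell_\cdot$ are Lipschitz and $C^1$. Inversion is smooth on operators bounded below by $C_0$, so $\Xi$ is Lipschitz and $C^1$. This proves (1).

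For (2), write the first-order condition: for every $\psi_1\in\dot{H}_m$ with $\psi=\mathcal{U}_w(\psi_1)$,
\[
\int_{t_1}^{t_2}(\mathcal{P}_NG\breve\varphi,\mathcal{P}_NG\psi)_{\dot{H}^{-s}(s_1,s_2)}dt+(v_0,\psi(0))_{\dot{L}^2}=0 .
\]
Since $\{\lambda_k^{-s/2}\beta_k\}$ diagonalizes $-\Delta_{loc}$ and $\mathcal{P}_N$ commutes with it, the choice $\breve\xi=(-\Delta_{loc})^{-s}\mathcal{P}_NG\breve\varphi$ turns the first term into $\int_{t_1}^{t_2}(\breve\xi,\mathcal{P}_NG\psi)_{\dot{L}^2(s_1,s_2)}dt$. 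Comparing with the dual identity \eqref{Dual-identity}, applied with $\varphi_1=\psi_1$ and extended by density/continuity to $v_0\in\dot{H}^s$, gives $(\breve v(1),\psi_1)_{\dot{L}^2}=0$ for all $\psi_1\in\dot{H}_m$. That is exactly $P_m\breve v(1)=0$.

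It remains to bound the control:
\[
\int_{t_1}^{t_2}\|\breve\xi\|^2_{\dot{H}^s(s_1,s_2)}dt=Q_w(\breve\varphi_1).
\]
Testing the Euler--Lagrange equation with $\psi_1=\breve\varphi_1$ gives $Q_w(\breve\varphi_1)=-(v_0,\breve\varphi(0))\le C(R)\|v_0\|_{\dot{H}^s}\|\breve\varphi_1\|_{\dot{H}^{-s}}$. Combining this with observability, $\|\breve\varphi_1\|^2_{\dot{H}^{-s}}\le C_0^{-1}Q_w(\breve\varphi_1)$, yields $Q_w(\breve\varphi_1)\le C(R)^2C_0^{-1}\|v_0\|^2_{\dot{H}^s}$.

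The main delicate point is the uniformity: the constants must not depend on $m$ or $N$. This holds because $C_0$ in Lemma~\ref{Lemma-truncatedobs} is independent of $m$, and every other constant comes from Proposition~\ref{Prop-linear-2}.
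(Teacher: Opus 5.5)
Your proposal is correct and follows the paper's HUM argument: strict convexity and coercivity of $J$ from Lemma~\ref{Lemma-truncatedobs}; the vanishing G\^{a}teaux derivative combined with the dual identity \eqref{Dual-identity} to get $P_m\breve v(1)=0$; and testing with $\breve\varphi_1$ together with observability to bound the control. The only difference is cosmetic: you make the Lipschitz and $C^1$ dependence of $\Xi(w)$ explicit through $\Xi(w)v_0=-A(w)^{-1}\ell_{v_0}$ and the uniform lower bound $A(w)\ge C_0$, whereas the paper invokes the implicit function theorem and cites \cite{CXZZ-25}.
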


Assuming its validity for the moment, let us first fulfill the low-frequency controllability.

\begin{proof}[\bf Proof of Proposition~\ref{Prop-LFcontrol}]
Assertion (1) follows immediately from Proposition~\ref{Prop-LFcontrol-HUM}(2), and (2) is due to the fact that the control $\breve{\xi}$ (as a linear operator of $v_0$) is the composition of three bounded linear operators: $(-\Delta_{loc})^{-s}\mathcal{P}_N G$, $\mathcal{U}_w$ and $\Xi(w)$. The first is independent of $w$, and the last two are both Lipschitz and continuously differentiable in $w$, thanks to Proposition~\ref{Prop-linear-2} and Proposition~\ref{Prop-LFcontrol-HUM}(1). The proof is now complete.
\end{proof}

And then we provide the HUM arguments.

\begin{proof}[\bf Proof of Proposition~\ref{Prop-LFcontrol-HUM}]
Let $N\in\N^+$ (depending on $\sigma,s,R$ and $m$) be established in Lemma~\ref{Lemma-truncatedobs}, and fix any $w\in B_{\dot{Y}^{1/4+\sigma}_1}(R)$ and $v_0\in \dot{H}^s$. To prove (1), the existence and uniqueness of global minimizer for $J$ can be derived from strict convexity and coercivity of $J$ (the latter is a consequence of truncated observability \eqref{Truncated-obs}). The linearity of $\Xi(w)$, as well as its Lipschitz and $C^1$ dependence on $w$, are standard facts for quadratic variation problems, whose proof is based on \eqref{Gateaux-derivative} below and implicit function theorem. We refer the careful reader to \cite[Proposition 4.9(3)]{CXZZ-25} for further details.

To prove (2), note that the G\^{a}teaux derivative of $J$ at $\breve{\varphi}_1$ vanishes, i.e.
\begin{align}\label{Gateaux-derivative}
0=\lim_{\varepsilon\rightarrow 0}\frac{J(\breve\varphi_1+\varepsilon\varphi_1)-J(\breve\varphi_1)}{\varepsilon}=\int_{t_1}^{t_2}(\breve\xi,\mathcal{P}_NG\varphi)_{\dot{L}^2(s_1,s_2)}dt+(v_0,\varphi(0))_{\dot{L}^2},\quad \forall \varphi_1\in \dot{H}_m.
\end{align}
Taking $\xi=\breve\xi$ in \eqref{Dual-identity}, we thus obtain
$
(\breve v(1),\varphi_1)=0
$. Since $\varphi_1\in\dot{H}_m$ is arbitrary, we conclude that $P_m\breve v(1)=0$.
Furthermore, taking $\varphi_1=\breve\varphi_1$ in \eqref{Dual-identity} and using \eqref{Truncated-obs}, one can infer that 
\[\int_{t_1}^{t_2}\|\breve\xi(t)\|^2_{\dot{H}^{s}(s_1,s_2)}dt=-(v_0,\breve\varphi(0))_{\dot{L}^2}\leq C\|v_0\|_{\dot{H}^s}\|\breve\varphi_1\|_{\dot{H}^{-s}}\leq C\|v_0\|_{\dot{H}^s}^2+\frac{1}{2}\int_{t_1}^{t_2}\|\breve\xi(t)\|^2_{\dot{H}^{s}(s_1,s_2)}dt.\]
Absorb the last term to the left-hand side concludes the proof of (2).
\end{proof}

\normalem
\bibliographystyle{plain}
\bibliography{References}
	
\end{document}